\documentclass[11pt,english,a4paper]{smfart}
\usepackage[english]{babel}
\usepackage{xcolor}
\usepackage{pbsi}
\usepackage[T1]{fontenc}
\usepackage{stmaryrd}
\usepackage{mathabx}
\usepackage[mathscr]{euscript}
\usepackage[pagebackref,colorlinks=true,linkcolor=blue,citecolor=blue,urlcolor=red, hypertexnames=true]{hyperref}
 \usepackage[abbrev,backrefs,nobysame]{amsrefs}
 \usepackage{caption}
\selectlanguage{english}
\marginparwidth=10 true mm
\oddsidemargin=0 true mm
\evensidemargin=0 true mm
\marginparsep=5 true mm
\topmargin=0 true mm
\headheight=8 true mm
\headsep=4 true mm
\topskip=0 true mm
\footskip=15 true mm

\setlength{\textwidth}{150 true mm}
\setlength{\textheight}{220 true mm}
\setlength{\hoffset}{8 true mm}
\setlength{\voffset}{2 true mm}

 \usepackage{enumerate}
\usepackage{amsmath,amssymb,bm,amsfonts}
\usepackage[all]{xy}
\entrymodifiers={+!!<0pt,\fontdimen22\textfont2>}

\usepackage{mathrsfs}

\usepackage{graphicx}

\usepackage{color}

\newtheorem{theorem}{Theorem}[section]
\newtheorem{lemma}[theorem]{Lemma}

\newtheorem{corollary}[theorem]{Corollary}
\newtheorem{proposition}[theorem]{Proposition}

\newcommand{\flba}[2]{
\xymatrix@C15pt{#1\ar@{|->}[r]&#2}}
\newcommand{\flcourte}[2]{
\xymatrix@C12pt{#1\ar[r]&#2}}

{\theoremstyle{definition}}

{\theoremstyle{definition}\newtheorem{example}[theorem]{Example}}

\theoremstyle{definition}

\newtheorem{question}[theorem]{Question}
\newtheorem{fact}[theorem]{Fact}

\newtheorem*{remark*}{\it Remark}

\newtheorem*{question*}{\it Question}

{\theoremstyle{definition}\newtheorem{remark}[theorem]{Remark}}

\def\D{\ensuremath{\mathbb D}}

\def\T{\ensuremath{\mathbb T}}
\def\R{\ensuremath{\mathbb R}}
\def\Z{\ensuremath{\mathbb Z}}

\def\C{\ensuremath{\mathbb C}}

\def\N{\ensuremath{\mathbb N}}

\def\bth{\begin{theorem}}
\def\blm{\begin{lemma}}
\def\bpr{\begin{proposition}}
\def\bpf{\begin{proof}}
\def\epf{\end{proof}}
\def\epr{\end{proposition}}
\def\elm{\end{lemma}}
\def\eth{\end{theorem}}
\def\bco{\begin{corollary}}
\def\eco{\end{corollary}}
\def\be{\begin{enumerate}}
\def\ee{\end{enumerate}}
\def\bea{\begin{enumerate}[\rm (a)]}
\def\beun{\begin{enumerate}[\rm (1)]}
\def\bei{\begin{enumerate}[\rm (i)]}

\newcommand{\pss}[2]{\ensuremath{{\langle #1,#2\rangle}}}

\newcommand{\ba}[1]{\overline{#1}}

\newcommand{\gd}{G_{\delta }}

\newcommand{\bmx}{{\mathcal{B}}_{M}(X)}
\newcommand{\bbx}{{\mathcal{B}}_{1}(X)}

\newcommand{\wot}{\texttt{WOT}}

\newcommand{\sot}{\texttt{SOT}}
\newcommand{\sote}{\texttt{SOT}\mbox{$^{*}$}}
\newcommand{\bx}{{\mathcal B}(X)}

\def\lp{\ell_{p}}

\DeclareMathOperator{\dist}{dist}
\numberwithin{equation}{section}

\author[S. Grivaux]{Sophie Grivaux}
\address[S. Grivaux]{CNRS, Univ. Lille, UMR 8524 - Laboratoire Paul
Painlev\'e, F-59000 Lille, France}
\email{sophie.grivaux@univ-lille.fr}
\author[\'{E}. Matheron]{\'{E}tienne Matheron}
\address[\'{E}. Matheron]{Laboratoire de Math\'{e}matiques de Lens\\ Universit\'{e} d'Artois\\ Rue Jean Souvraz SP 18\\ 62307 Lens, France}
\email{etienne.matheron@univ-artois.fr}

\begin{document}

\title[Local spectral properties of typical contractions]{Local spectral properties of typical contractions on \(\lp\,\)-$\,$spaces}

\keywords{Polish topologies, $\ell_p\,$-$\,$spaces, typical properties of operators, local spectrum, orbits of operators}
\subjclass{47A15, 47A16, 54E52}
 \thanks{This work was supported in part by
the project FRONT of the French
National Research Agency (grant ANR-17-CE40-0021) and by the Labex CEMPI (ANR-11-LABX-0007-01).}

\begin{abstract}  We study some local spectral properties of contraction operators on $\ell_p$, $1<p<\infty$ from a Baire category point of view, with respect to the Strong$^*$ Operator Topology. 
In particular, we show that a typical contraction on $\ell_p$ has Dunford's Property (C) but neither Bishop's Property $(\beta)$ nor the Decomposition Property $(\delta)$, and is completely indecomposable. We also obtain some results regarding the asymptotic behavior   
of orbits of typical contractions on $\lp$. 
\end{abstract}

\maketitle

\hfill\emph{To the memory of J\"org Eschmeier}

\par\bigskip
\section{Introduction}\label{Introduction}
In this note, we shall be interested in the \emph{typicality}, in the sense of Baire Category, of some natural properties of Banach space operators pertaining to \emph{local spectral theory}. 
\par\medskip
In the whole paper, the letter \(X\) will denote an infinite-dimensional complex (separable) Banach space with separable dual. Most of the time, $X$ will be in fact \(\lp:=\lp(\Z_{+})\) for some \(1<p<\infty\). 
We denote by \(\bx\) the algebra of bounded operators on \(X\), and by \(\bbx\) the set of contraction operators on \(X\):
\[
\bbx=\bigl\{T\in\bx\;;\;||T||\le 1\bigr\}.
\]
There are several natural topologies on \(\bbx\) which may turn it into a Polish (\textit{i.e.} separable and completely metrizable) space, thus allowing for a study of  ``large'' sets of contractions on \(X\) in the sense of Baire Category. Some well-known such topologies  are the Weak Operator Topology (\texttt{WOT}), the Strong Operator Topology (\sot), and the 
Strong\(^{*}\) Operator Topology (\sote). In this note, we shall almost exclusively be concerned with the topology \sote, which is defined as follows: a net \((T_{i})\subseteq\bx\) converges to \(T\in\bx\) with respect to \sote\ if and only if \(T_{i}x\) tends to \(Tx\) in norm for every \(x\in X\)  and 
\(T_{i}^{*}x^{*}\) tends to \(T^{*}x^{*}\) in norm for every \(x^{*}\in X^{*}\). In other words, \(T_{i}\to T\) for \sote\ if and only if \(T_{i} \to T\) and \(T_{i}^{*}\to T^{*}\) for \sot. Under our assumptions on \(X\), it is well-known that \(\bbx\) is a Polish space when endowed with \sote. 
In addition to its Polishness, one pleasant feature of the topology \sote\ is that when \(X\) is reflexive, the map \(T\mapsto T^{*}\) is a homeomorphism from \(\bbx\) onto \(\mathcal{B}_{1}(X^{*})\). 

\medskip
In this paper, the word \emph{typical} will always refer to the topology \sote, unless the contrary is explicitly mentioned. Thus, we will say that a given property  (P) of operators $T\in\bbx$ is typical, or that \emph{a typical $T\in\bbx$ has Property \emph{(P)}}, if the set $\bigl\{T\in\bbx\;;\;T\ \hbox{has property (P)}\bigr\}$ is comeager in \((\bbx,\sote)\), \textit{i.e.}\ contains a dense \(G_{\delta }\) subset of 
\((\bbx,\sote)\). In other words, 
a property (P) of contractions on \(X\) is typical if the set of all \(T\in\bbx\) enjoying it is very large in the sense of Baire Category. 

\medskip Typical properties of \emph{Hilbert} space contractions with respect to the topologies \wot, \sot\ and \sote\ are investigated in depth in  \cite{Ei} and \cite{EM}. The main conclusions drawn from these works are that the generic situation is rather well understood in the case of \wot\ (a typical contraction is unitary); that it is ``trivial'' in the case of \sot\ (a typical contraction is unitarily equivalent to the backward shift with infinite multiplicity, so there is generically \emph{just one} Hilbert space contraction); and that things are more complicated for the topology \sote
. This line of thought is pursued in   
\cite{GMM1} and \cite{GMM2}. The setting of \cite{GMM1} is mostly Hilbertian, although many of the proofs work as well on \(\lp\,\)-$\,$spaces; and for that reason, emphasis is mostly put on the topology \sote.  In \cite{GMM2}, the setting is explicitly that of 
\(\lp\,\)-$\,$spaces, and the typicality of certain properties of operators pertaining to the study of the Invariant Subspace Problem is considered for the topologies \sot\ and \sote.
\par\medskip
This note may be viewed as an \textit{addendum} to  \cite{GMM2}. 
Our aim will be to determine whether certain properties of operators related to local spectral theory  are typical in 
\(\mathcal{B}_{1}(\lp)\), \(1<p<\infty\). As will be explained below, our study was in part motivated by results from \cite{Br2}, \cite{EP1}, and \cite{EP2} showing the existence of non-trivial invariant subspaces under suitable spectral assumptions.

\medskip Throughout the paper, we denote by $\D$ the open unit disk in $\C$, and by $\overline{\,\D}$ the closed unit disk. Also, we denote by $\sigma(T)$ the spectrum of an operator $T\in\bx$. Thus, we have $\sigma(T)\subseteq \overline{\,\D}$ for every $T\in\bbx$.
\section{Background from local spectral theory}\label{Section Background from local spectral theory}
In order to be as self-contained as possible, we recall in this section some basic definitions as well as some important results from local spectral theory. We follow the terminology and notations of \cite{LN}.

\subsection{Decomposable operators} A large part of local spectral theory is motivated by the study of so-called \emph{decomposable operators}. An operator \(T\in\bx\) is decomposable if for any covering \((U_{1},U_{2})\) of \(\C\) by two open subsets, there exist two closed \(T\)-$\,$invariant subspaces \(E_{1}\) and \(E_{2}\) such that \(\sigma (T_{|E_{1}})\subseteq U_{1}\), \(\sigma (T_{|E_{2}})\subseteq U_{2}\), and \(X=E_{1}+E_{2}\). For instance, it follows from the Riesz Decomposition Theorem that any operator with totally disconnected spectrum is decomposable. Also, any surjective isometry is decomposable (see \cite[Proposition 1.6.7]{LN}). Two related notions introduced and studied in \cite{AtzSod} are \emph{strong indecomposability},  and \emph{complete indecomposability}: an operator \(T\in\bx\) is {strongly indecomposable} if its spectrum \(\sigma (T)\) is not a singleton and is included in the spectrum of the restriction of \(T\) to any of its (closed) non-zero invariant subspaces; and $T$ is {completely indecomposable} if both \(T\) and \(T^{*}\) are strongly indecomposable.

\subsection{Local spectra} 
The \emph{local resolvent set} of an operator \(T\in\bx\) at a vector \(x\in X\) is the open subset \(\rho _{x}(T)\) of \(\C\) defined in the following way: a complex number \(\lambda \) belongs to \(\rho _{x}(T)\) if there exists a holomorphic function \(f:V\to X\) defined on some neighborhood \(V\) of \(\lambda \) such that 
\((T-z)f(z)=x\) for every \(z\in V\). Note that the usual resolvent set  \(\rho (T):=\C\,\setminus\, \sigma (T)\) is contained in \(\rho _{x}(T)\): for any $\lambda\in\rho(T)$, we may take $V:=\rho(T)$ and we \emph{must} take $f(z):=(T-z)^{-1}x$ for every $z\in\rho(T)$. 	The \emph{local spectrum} of \(T\) at \(x\) is the compact set \(\sigma _{x}(T)=\C\,\setminus\, \rho _{x}(T)\), which is contained in \(\sigma (T)\). Obviously \(\sigma _{x}(T)=\emptyset\) if \(x=0\); but it may happen that \(\sigma _{x}(T)=\emptyset\) for some non-zero vector \(x\).

\subsection{Eigenvector fields, and SVEP}\label{svep} Let $T\in\bx$. A \emph{holomorphic $T\,$-$\,$eigenvector field} is a holomorphic function $E:V\to X$ defined on some open set $V\subseteq\C$, such that $(T-z)E(z)=0$ for every $z\in V$. (The terminology is slightly inaccurate since $E(z)$ is allowed to be $0$.) The operator $T$ has the \emph{Single Valued Extension Property} (SVEP) if for any open set $V\subseteq\C$, the only holomorphic $T\,$-$\,$eigenvector field $E:V\to X$ is $E=0$. It is rather clear that if $T$ has SVEP then, for any $x\in X$, there is a unique holomorphic function $f:\rho_x(T)\to X$ solving the equation $(T-\lambda)f(\lambda)\equiv x$ on $\rho_x(T)$, which is called the \emph{local resolvent function for $T$ at $x$}. Moreover, it can be shown that $T$ has SVEP if and only if $\sigma_x(T)\neq\emptyset$ for every $x\neq 0$ (see \cite[Proposition 1.2.16]{LN}). Obvious examples of operators with SVEP are all operators whose set of eigenvalues has empty interior. In particular, any isometry has SVEP. On the other hand, the usual backward shift $B$ on $\ell_p$ lacks SVEP in a strong way, since it has a \emph{spanning} holomorphic eigenvector field defined in the unit disk $\D$, \mbox{\it i.e.} a holomorphic eigenvector field  $E:\D\to \ell_p$ such that $\overline{\rm span} \,\{ E(z);\; z\in\D\}=\lp$; namely, $E(z):=\sum_{j=0}^\infty z^j e_j$, where $(e_j)_{j\geq 0}$ is the canonical basis of $\ell_p$. More generally, by a classical result of Finch \cite{F} (see also \cite[Proposition 1.2.10]{LN}), any surjective but non-invertible operator lacks SVEP. At the other extreme, the usual \emph{forward} shift $S$ on $\ell_p$ has ``{maximal local spectra}'', \mbox{\it i.e.} it satisfies $\sigma_x(S)=\overline{\,\D}$ for every $x\neq 0$. 
In fact, the Remark after \cite[Theorem 1.5]{Vr} shows the following: if $T\in\mathcal B(X)$ is such that its Banach space adjoint $T^*\in\mathcal B(X^*)$ admits a spanning holomorphic eigenvector field $E$ defined on some connected open set $\Omega\subseteq\C$, then $\sigma_x(T)$ contains $\overline\Omega$ for every $x\neq 0$. 

\subsection{Local spectral radii} If \(T\in\bx\) and \(x\in X\), the \emph{local spectral radius} of \(T\) at \(x\) is the number
\[
r_{x}(T):=\limsup_{n\to\infty}\,||T^{n}x||^{1/n}.
\]
It is not difficult to see that \(r_{x}(T)\ge\sup\{|\lambda |\;;\;\lambda \in\sigma _{x}(T)\}\); and a littler harder to show that if \(T\) has SVEP, then \(r_{x}(T)=\sup\{|\lambda |\;;\;\lambda \in\sigma _{x}(T)\}\) for every \(x\neq 0\). This is the \emph{local spectral radius} formula, see \cite[Proposition 3.3.13]{LN}.

\subsection{Spectral subspaces} Let $T\in\bx$. For any subset \(F\) of \(\C\), let
\[
X_{T}(F):=\{x\in X\;;\;\sigma _{x}(T)\subseteq F\}.
\]
With this notation, we see from \ref{svep}  that the operator \(T\) has SVEP if and only if \(X_{T}(\emptyset)=\{0\}\); and it turns out that this holds if and only if \(X_{T}(\emptyset)\) is closed in \(X\). Also, it can be shown that \(X_{T}(F)\) is always a \(T\,\)-$\,$hyperinvariant linear subspace of \(X\). See \cite[Proposition 1.2.16]{LN}. Moreover, if $T$ has SVEP and if $F$ is a closed set such that $X_T(F)$ is closed, then $\sigma\bigl(T_{| X_T(F)}\bigr)\subseteq \sigma(T)\cap F$ (see \cite[Proposition 1.2.20]{LN}). 

\medskip
The subspaces $X_T(F)$ may be called \emph{local spectral subspaces} for $T$, since $x\in X_T(F)$ means that for any $\lambda\in\C\setminus F$, there is a holomorphic solution of the equation $(T-z)f(z)\equiv x$ defined in a neighborhood of $\lambda$. When $F$ is closed, 
one can also define a \emph{global} spectral subspace, denoted by $\mathcal X_T(F)$: a vector $x$ belongs to $\mathcal X_T(F)$ if there is a holomorphic solution of $(T-z)f(z)\equiv x$ globally defined on $\C\setminus F$. Obviously $\mathcal X_T(F)\subseteq X_T(F)$; and if $T$ has SVEP then $\mathcal X_T(F)=X_T(F)$ for any closed set $F\subseteq \C$.

\subsection{Property (C)} 
An operator $T\in\bx$ is said to have \emph{Dunford's Property $(\emph{C})$} if $X_T(F)$ is closed in $X$ for every closed set $F\subseteq \C$. By what has been said just after the definition of $X_T(F)$, Property (C) implies SVEP. Moreover, it can be shown that decomposable operators have Property (C), and in fact that an operator $T\in\bx$ is decomposable if and only if (i) $T$ has Property (C) and (ii) for every open covering $(U_1, U_2)$ of $\C$, it holds that $X=X_T(\overline{\,U_1})+X_T(\overline{\,U_2})$. See \cite[Theorem 1.2.23]{LN}.

\subsection{Property $(\delta)$} The global spectral subspaces $\mathcal X_T(F)$ are involved in the definition of the important \emph{Decomposition Property} \((\delta )\): an operator \(T\in\bx\) has Property \((\delta )\) if for any open covering \((U_{1},U_{2})\) of \(\C\), we have \(\mathcal X_{T}(\overline{\,U_{1}})+\mathcal X _{T}(\overline{\,U_{2}})=X\). Every decomposable operator has property \((\delta )\); and more precisely, an operator is decomposable if and only if it has both properties (C) and \((\delta )\). See \cite[Proposition 1.2.29]{LN}.

\subsection{Property $(\beta)$} An operator $T\in\bx$ has \emph{Bishop's Property $(\beta)$} if the following holds true: for any open set $V\subseteq \C$ and any sequence of holomorphic functions $\phi_n:V\to X$, if $(T-\lambda)\phi_n(\lambda)\to 0$ uniformly on compact subsets of $V$, then $\phi_n(\lambda)\to 0$ uniformly on compact sets. Obviously, Property $(\beta)$ implies SVEP; and in fact, Property $(\beta)$ implies Property (C) (see \cite[Proposition 1.2.19]{LN}). That the converse is not true is a classical result due to Miller and Miller \cite{MiMi} (see also \cite[Theorem 1.6.16]{LN}). On the other hand, decomposable operators have Property $(\beta)$ (see \cite[Theorem 1.2.7]{LN}); and the link between decomposability and Property $(\beta)$ is actually much deeper: by a famous result of Albrecht and Eschmeier \cite{AE} (see also \cite[Theorem 2.4.4]{LN}), an operator $T\in\bx$ has Property $(\beta)$ if and only if it is similar to the restriction of some {decomposable} operator to one of its invariant subspaces.

Since property \((\beta )\) implies property (C), and since \(T\) is decomposable if and only if it has both properties (C) and \((\delta )\), it follows that \(T\) is decomposable if and only if it has both properties \((\beta )\) and \((\delta )\).
\par\medskip
Also, since invertible isometries are decomposable and since every (not necessarily invertible) isometry is the restriction of some invertible isometry on a larger Banach space by a classical result of Douglas \cite{D} (see also \cite[Proposition 1.6.6]{LN}), we see that every isometry has Property $(\beta)$, and hence Property (C). (Incidentally, this shows that an operator $T$ may have Property (C) whereas $T^*$ does not and even lacks SVEP: consider the usual forward shift $S$ on $\ell_p$, for which $X_T(F)=X$ if $F\supseteq\overline{\,\D}$ and $X_T(F)=\{ 0\}$  otherwise.) 

\medskip Finally, there is a quite remarkable duality between properties $(\beta)$ and $(\delta)$, due to Albrecht-Eschmeier \cite{AE} (see also \cite[Theorem 2.5.18]{LN}): an operator $T$ has Property $(\beta)$ if and only if $T^*$ has Property $(\delta)$, and $T$ has Property $(\delta)$ if and only if $T^*$ has Property $(\beta)$. In particular, $T$ is decomposable if and only if $T^*$ is decomposable.

\subsection{Invariant subspaces} Local spectral theory provides sophisticated and extremely efficient tools for extending to the Banach space setting some Hilbertian invariant subspaces results concerning operators with \emph{thick spectrum}. Recall that a compact subset \(K\) of \(\C\) is said to be \emph{thick} if there exists a non-empty bounded open set \(U\subseteq \C\) in which \(K\) is \emph{dominating}, which means that $\sup_{z\in U} \vert f(z)\vert= \sup_{z\in U\cap K} \vert f(z)\vert$ for every bounded holomorphic function $f$  on $U$.
\par\medskip
A well-known result of Brown \cite{Br2} states that any hyponormal operator with thick spectrum on a Hilbert space admits a non-trivial invariant closed subspace. This result admits several extensions to the Banach space setting, perhaps the ultimate one being due to Eschmeier and Prunaru \cite{EP2}. It involves the notion of \emph{localizable spectrum} of an operator. If \(T\in\bx\), the {localizable spectrum} \(\sigma _{loc}(T)\) of \(T\) is the closed subset of \(\C\) defined as
\[
\sigma _{loc}(T):=\bigl\{\lambda \in\C\;;\;\mathcal X_{T}(\ba{V})\neq\{0\}\ \textrm{for any neighborhood}\ V\ \textrm{of}\ \lambda \bigr\}.
\]

\noindent
It is not difficult to see, using Liouville's Theorem, that \(\sigma _{loc}(T)\subseteq \sigma (T) \); and it is shown in \cite[Theorem 2]{MiMiN} that equality holds if $T$ has Property $(\delta)$. Regarding the existence of invariant subspaces, the main result of \cite{EP2} runs as follows: {If \(T\in\bx\) is such that either \(\sigma _{loc}(T)\) or \(\sigma _{loc}(T^{*})\) is thick, then \(T\) has a non-trivial closed invariant subspace.} Since \(\sigma _{loc}(T)=\sigma (T)\) if $T$ has Property $(\delta)$, it follows that any operator with Property \((\delta )\) or Property \((\beta )\) and with thick spectrum has a non-trivial invariant subspace (\cite{EP1}, see also \cite[Theorem 2.6.12]{LN}). As any hyponormal operator on a Hilbert space has property \((\beta )\), this extends indeed the Hilbertian result from \cite{Br2}.
\par\medskip
This result from \cite{EP2} was the initial motivation for this work. Indeed, \cite{GMM2} left open the following basic question: does a typical operator on \(\lp\) for \(1<p\neq 2<\infty\) has a non-trivial invariant subspace? As a typical operator \(T\) on \(\lp\) satisfies \(\sigma (T)=\ba{\,\D}\) by \cite[Proposition 7.2]{GMM2}, and hence has thick spectrum, it is natural, in view of \cite{EP2} to wonder whether a typical \(T\in\mathcal B_1(\lp)\) has Property \((\delta) \) or Property \((\beta )\), or has thick localizable spectrum. We will unfortunately see that none of this happens.  
Thus, it seems that methods from local spectral theory are not likely to help much for showing that a typical contraction on $\lp$, $p\neq 2$, has a non-trivial invariant subspace. For \(p=2\), it follows from the Brown-Chevreau-Pearcy Theorem \cite{BCP}  and from the fact that a typical \(T\) is such that \(\sigma (T)=\overline{\,\D}\), that a typical \(T\in \mathcal{B}_{1}(\ell_2)\) does have a non-trivial invariant subspace.

\subsection{Results} The rest of the paper is organized as follows. In Section \ref{Section 3}, we show that typical operators on \(\lp\) have {maximal} local  spectra, and we draw several consequences from this. Most notably, we show that a typical $T\in\bbx$ has Property (C) but has neither Property $(\delta)$ nor Property $(\beta)$, is completely indecomposable and has empty localizable spectrum. In Section \ref{Section 4}, we present some results regarding the asymptotic behavior of orbits of typical operators on \(\lp\). The general idea is that, for a typical $T\in\mathcal B_1(\ell_p)$, no non-zero orbit can be ``too small'', yet most orbits are ``partly small''. One consequence of these results is that for any $M>1$, a typical $T\in\mathcal B_M(\ell_2)$ satisfies a strong form of \emph{distributional chaos}. In Section \ref{Section 5}, we deviate a little bit from our main topic: we state a simple condition ensuring that an operator lacks SVEP in a strong way, and we use this to give examples of operators admitting a mixing Gaussian measure with full support. Finally, we gather in Section \ref{Section 6} some open questions and remarks.

\subsection{Duality} The following fact will be used several times in the paper. Each time we will write ``by duality'', this will refer implicitly to this fact.
\begin{fact}\label{dualityrem} Let (P) be a property of Banach space operators. If, for any $1<p<\infty$, a typical $T\in\mathcal B_1(\ell_p)$ satisfies (P), then it is also true that for any $1<p<\infty$, a typical $T\in\mathcal B_1(\ell_p)$ is such that $T^*$ satisfies (P).
\end{fact}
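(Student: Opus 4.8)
The plan is to exploit the fact that, for reflexive $X$ (in particular $X = \ell_p$, $1<p<\infty$), the adjoint map $T \mapsto T^*$ is a homeomorphism from $\mathcal{B}_1(\ell_p)$ onto $\mathcal{B}_1(\ell_{p'})$, where $p'$ is the conjugate exponent, and then to pull back comeagreness along this homeomorphism. More precisely, suppose that for every $1<p<\infty$ a typical $T \in \mathcal{B}_1(\ell_p)$ satisfies (P); we want to show that for every $1<p<\infty$ a typical $T \in \mathcal{B}_1(\ell_p)$ is such that $T^*$ satisfies (P).

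Fix $1<p<\infty$ and let $p'$ be its conjugate exponent, so that $1<p'<\infty$ as well. Applying the hypothesis with $p'$ in place of $p$, the set $\mathcal{G} := \{ S \in \mathcal{B}_1(\ell_{p'}) \;;\; S \text{ satisfies (P)}\}$ is comeager in $(\mathcal{B}_1(\ell_{p'}), \sote)$. Now consider the adjoint map $\Phi : (\mathcal{B}_1(\ell_p), \sote) \to (\mathcal{B}_1(\ell_{p'}), \sote)$ defined by $\Phi(T) = T^*$; as recalled in the Introduction, this is a homeomorphism because $\ell_p$ is reflexive and we may identify $(\ell_p)^* = \ell_{p'}$ (and $(\ell_{p'})^* = \ell_p$). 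Since a homeomorphism preserves the class of comeager sets, the preimage $\Phi^{-1}(\mathcal{G}) = \{ T \in \mathcal{B}_1(\ell_p) \;;\; T^* \text{ satisfies (P)}\}$ is comeager in $(\mathcal{B}_1(\ell_p), \sote)$. This is exactly the assertion that a typical $T \in \mathcal{B}_1(\ell_p)$ is such that $T^*$ satisfies (P), and since $p$ was arbitrary we are done.

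There is essentially no obstacle here: the only point that requires a moment's care is the identification $(\ell_p)^* = \ell_{p'}$ allowing us to regard $T^*$ as an operator on $\ell_{p'}$ rather than on the abstract dual, and the observation that $\Phi$ is not merely a bijection but a homeomorphism for $\sote$ — both of which are standard and are already recorded in Section \ref{Introduction}. One small subtlety worth flagging is that the statement is genuinely about ranging over all $p$ simultaneously: the hypothesis must be invoked at the conjugate exponent $p'$, not at $p$ itself, which is why the formulation quantifies over all $1<p<\infty$ on both sides. Thus the proof amounts to transporting comeagreness through the adjoint homeomorphism, applied at the conjugate index.
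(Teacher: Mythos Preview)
Your proof is correct and follows exactly the same approach as the paper: use that $T\mapsto T^*$ is an \sote-homeomorphism from $\mathcal B_1(\ell_p)$ onto $\mathcal B_1(\ell_{p'})$, apply the hypothesis at the conjugate exponent, and pull back comeagerness. The paper's version is a one-line sketch of precisely this argument.
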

\bpf This is clear since the map $T\mapsto T^*$ is a homeomorphism from $\mathcal B_1(\ell_p)$ onto $\mathcal B_1(\ell_{q})$, where $q$ is the conjugate exponent of $p$. Of course, this argument works only because we are using the topology \sote\ (it would break down for the topologies \wot\ and \sot, for instance).
\epf
\section{Operators with maximal local spectra}\label{Section 3}
It is proved in \cite[Proposition 3.9]{GMM2} that a typical \(T\in(\mathcal{B}_{1}(\lp), \sot)\), \(1<p<\infty\) is such that \(\sigma (T)=\ba{\,\D}\); and the proof given there works for the topology \sote\ as well. 
Thus, typical contractions on \(\lp\) have ``maximal'' spectrum. The following result strengthens this statement and shows that typical contractions on \(\lp\) have maximal local spectra. This is to be compared with \cite[Theorem 1.5]{Vr}, where it is shown that for any Banach space $X$ and any $T\in\bx$ with SVEP, the set $\{ x\in X;\; \sigma_x(T)=\sigma(T)\}$ is residual in $X$.
\par\medskip
\begin{theorem}\label{Th1}
 Let $X=\ell_p$, $1<p<\infty$. A typical $T\in\bbx$ is such that $\sigma_x(T)=\overline{\,\D}$ for every $x\neq 0$. Equivalently, a typical $T\in\bbx$ has the following property: for any closed, $T\,$-$\,$invariant subspace $Z\subseteq X$, it holds that \(\sigma_x\bigl(T_{| Z}\bigr)=\overline{\,\D}\) for every $x\neq0$ in \(Z\). 
\end{theorem}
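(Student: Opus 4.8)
The first step is to reduce the theorem to a density statement, using the Baire category method. I would fix a countable basis $(V_k)$ of nonempty open subsets of $\C$, each with $\overline{V_k}\subseteq\overline{\,\D}$, and for each $k$ and each $n\ge 1$ consider the set
\[
\mathcal O_{k,n}:=\Bigl\{T\in\bbx\;;\;\exists\,x\in X,\ \|x\|=1,\ \textrm{dist}\bigl(x,\mathcal X_T(\overline{\,\D}\setminus V_k)\bigr)<\tfrac1n\ \textrm{in some strong sense}\Bigr\},
\]
the point being to express ``$\sigma_x(T)\not\subseteq \overline{\,\D}\setminus V_k$ for a dense set of unit vectors $x$'' as a countable intersection of open sets. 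Concretely, since having maximal local spectrum is equivalent to: \emph{for every $x\ne0$ and every nonempty open $V\subseteq\D$, $\lambda$ cannot be in $\rho_x(T)$ for $\lambda\in V$}, i.e. the equation $(T-z)f(z)\equiv x$ has no local holomorphic solution near any point of $\D$. The natural ``witness'' for the bad event (a local resolvent function existing) is a finite Taylor polynomial, so one should be able to write the complement of the good set as a countable union of closed sets; the main care is to check that the relevant sets are genuinely $G_\delta$ in $(\bbx,\sote)$.

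The heart of the matter is the density step, and here the plan is to use the forward shift $S$ on $\ell_p$, which by the discussion in \S\ref{svep} satisfies $\sigma_x(S)=\overline{\,\D}$ for every $x\ne0$ (indeed $S^*=B$ has the spanning eigenvector field $z\mapsto\sum z^je_j$ on $\D$, so by the cited Remark after \cite[Theorem 1.5]{Vr} applied to $T=S$, every nonzero vector has local spectrum containing $\overline{\,\D}$). The strategy is then: given any $T_0\in\bbx$, a finite set of vectors $x_1,\dots,x_m$, a finite set of functionals, and $\varepsilon>0$, build a contraction $T$ that is $\sote$-close to $T_0$ on this data and is \emph{unitarily equivalent}, or rather similar via an invertible isometry-like change of coordinates, to an operator built from copies of the forward shift (or of $T_0$ on a finite block plus $S$ on a complementary infinite block). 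This is exactly the kind of ``approximate conjugation to a model operator with good spectral behaviour'' used in \cite{GMM1}, \cite{GMM2}: one splits $\ell_p=F\oplus G$ with $F$ finite-dimensional carrying the relevant data and $G\cong\ell_p$, lets $T$ act as a compression of $T_0$ on $F$ and as (a small perturbation making it a contraction of) the forward shift on $G$, arranged so that $T$ is $\sote$-close to $T_0$. The subtle point is that having maximal local spectrum is \emph{not} obviously inherited by such a direct-sum-type construction — local spectrum of $T$ at $x$ can be strictly smaller than the local spectra of the summands — so one must instead ensure the good property \emph{approximately}: for the given finitely many $V_k$ and the given scale $1/n$, exhibit for $T$ a dense-enough supply of unit vectors $x$ for which no degree-$\le n$ Taylor polynomial solves $(T-z)f(z)\equiv x$ near the centre of $V_k$. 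Using the forward-shift block, whose $^*$-adjoint has the explicit spanning eigenvector field, one gets a quantitative obstruction: pairing a putative local resolvent function with the eigenvector field $e(z)=\sum z^j e_j$ forces $\langle x, e(\lambda)\rangle=0$, and one can choose the coordinates of $x$ in the $G$-block so that this fails robustly.

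The main obstacle, as just indicated, is the approximation/perturbation bookkeeping: one needs a construction that simultaneously (i) is $\sote$-close to an arbitrary given $T_0$, (ii) is a genuine contraction, and (iii) has enough of the forward shift visible that the explicit eigenvector-field argument yields the quantitative failure of local-resolvent-existence at the required scale. I expect that the cleanest route is to prove the slightly stronger ``local'' statement first — a typical $T$ has, for \emph{each} nonempty open $V\subseteq\D$, no nonzero vector with $\sigma_x(T)\cap V=\emptyset$ — by reducing to showing that for a typical $T$, $T^*$ admits a spanning holomorphic eigenvector field on $\D$ (equivalently, that a typical $T$ is close to having the forward shift as a piece, so that $B$-like eigenvector fields for $T^*$ are abundant), and then invoking the Remark after \cite[Theorem 1.5]{Vr} verbatim. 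Finally, the ``Equivalently'' clause is immediate: if $Z$ is a closed $T$-invariant subspace and $x\in Z\setminus\{0\}$, then any local holomorphic solution of $(T_{|Z}-z)f(z)\equiv x$ with values in $Z$ is in particular a solution in $X$, so $\sigma_x(T)\subseteq\sigma_x(T_{|Z})$; hence $\sigma_x(T)=\overline{\,\D}$ forces $\sigma_x(T_{|Z})=\overline{\,\D}$, and conversely $\sigma_x(T_{|Z})\subseteq\overline{\,\D}$ always holds since $\|T_{|Z}\|\le1$.
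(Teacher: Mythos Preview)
Your plan has the right architecture (a $G_\delta$ set containing a dense class) and you correctly handle the ``Equivalently'' clause, but the core technical device is missing and the two steps you sketch both have genuine problems.

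\textbf{The $G_\delta$ step.} Your sets $\mathcal O_{k,n}$ are never made precise, and the Taylor-polynomial witness you propose for the bad event does not work: a finite jet satisfying $(T-z)f(z)=x$ to order $n$ does not yield a genuine local holomorphic solution, so the ``bad'' set is not obviously a countable union of closed sets this way. The paper sidesteps this entirely by introducing a \emph{different}, more concrete $G_\delta$ set:
\[
\mathcal G=\bigl\{T\in\bbx:\ \forall x\ne0,\ \{\mu\in\D: x\notin\mathrm{Ran}(T-\mu)\}\ \text{is dense in }\D\bigr\}.
\]
Membership in $\mathcal G$ forces $\sigma_x(T)\supseteq\D$ for every $x\ne0$ (if $x\notin\mathrm{Ran}(T-\mu)$ then no function $f$ can solve $(T-\mu)f(\mu)=x$, so $\mu\in\sigma_x(T)$), and $\mathcal G$ \emph{is} $G_\delta$: using weak compactness of balls in $\ell_p$, the set $\{(T,x):x\in(T-\mu)(B_N)\}$ is closed in $(\bbx,\sote)\times(X,w)$, and a Baire argument in the $\mu$-variable unwinds the density condition. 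This range formulation is the key idea you are missing.

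\textbf{The density step.} Your ``compression of $T_0$ on a finite block $\oplus$ forward shift on the complement'' is, as you yourself observe, problematic: the property is not inherited by direct sums, and your proposed fix (prove the property only ``approximately at each scale'') is not fleshed out. The paper's construction is \emph{not} a direct sum but a genuinely coupled operator: given a finite-rank $A$ on $[e_0,\dots,e_N]$, one takes
\[
Te_j=Ae_j+\delta\,e_{j+M+1}\ (0\le j\le M),\qquad Te_j=e_{j+M+1}\ (j>M),
\]
so that $T^*$ is a perturbed weighted backward shift $B_{\pmb\omega,A}$ which admits a spanning family of holomorphic eigenvector fields on $\D$. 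By a simple Hahn--Banach argument this forces $T\in\mathcal G$ (this is exactly the content of your ``Vrbov\'a remark'' idea, but used only to get $T\in\mathcal G$, not to describe the typical $T$ directly). Your ``alternative route'' at the end almost lands on this, but note the logical order: the set of $T$ such that $T^*$ has a spanning eigenvector field on $\D$ is \emph{not} claimed to be $G_\delta$; it is only used to witness density of the genuine $G_\delta$ set $\mathcal G$.
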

 
The equivalence of the two statements is easy to check. Indeed, it follows directly from the definition of the local spectra that if $T\in\bx$ and if $Z$ is a closed $T\,$-$\,$invariant subspace of $X$, then $\sigma_x\bigl( T_{| Z}\bigr)\supseteq \sigma_x(T)$ for every $x\in Z$. Hence, if $\Vert T\Vert\leq 1$ and $\sigma_x(T)=\overline{\, \D}$ for some $x\in Z$, then \textit{a fortiori} 
$\sigma_x\bigl( T_{| Z}\bigr)=\overline{\,\D}$.

\medskip Before giving the proof of Theorem \ref{Th1}, we present some consequences.

\bco\label{noeigen}  Let $X=\ell_p$, $1<p<\infty$. A typical $T\in\bbx$ has the following properties : $T-\lambda$ is one-to-one with dense range for every $\lambda\in\C$, and $T-\lambda$ does not have closed range for any $\lambda\in\overline{\,\D}$. In particular, the essential spectrum of a typical $T\in\bbx$ is equal to $\overline{\,\D}$.
\eco
\bpf These results are known; see \cite[Proposition 6.3]{EM}, \cite[Proposition 2.24 and Remark 2.30]{GMM1} and \cite[Proposition 7.1]{GMM2}. However, they can also be deduced from Theorem \ref{Th1}. Indeed, since $\sigma_x(T)\subseteq\{ \lambda\}$ if $Tx=\lambda x$, it follows from Theorem \ref{Th1} that a typical $T\in\bbx$ has no eigenvalue. By duality (see Fact \ref{dualityrem}), this implies that a typical $T\in\bbx$ is such that $T-\lambda$ has dense range for every $\lambda\in\C$. Moreover, it also follows from Theorem \ref{Th1} that a typical $T\in\bbx$ is such that $\sigma(T)=\overline{\,\D}$; and so for a typical $T\in\bbx$, the operator $T-\lambda$ cannot have closed range for any $\lambda\in\overline{\,\D}$.
\epf

\smallskip
\begin{corollary}
 Let $X=\ell_p$, $1<p<\infty$. A typical $T\in\bbx$ is completely indecomposable.
\end{corollary}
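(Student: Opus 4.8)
The plan is to read complete indecomposability straight off Theorem \ref{Th1}, using the duality Fact \ref{dualityrem} to handle the adjoint. Recall that $T$ is completely indecomposable precisely when both $T$ and $T^*$ are strongly indecomposable, and that $T$ is strongly indecomposable when $\sigma(T)$ is not a singleton and is contained in $\sigma\bigl(T_{|Z}\bigr)$ for every non-zero closed $T\,$-$\,$invariant subspace $Z$.

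First I would show that a typical $T\in\bbx$ is strongly indecomposable. Fix $T$ in the comeager set provided by Theorem \ref{Th1}, so that $\sigma_x\bigl(T_{|Z}\bigr)=\overline{\,\D}$ for every closed $T\,$-$\,$invariant subspace $Z\subseteq X$ and every $x\neq 0$ in $Z$. Taking $Z=X$ and using the elementary inclusions $\sigma_x(T)\subseteq\sigma(T)\subseteq\overline{\,\D}$ (the latter because $\Vert T\Vert\le 1$), we get $\sigma(T)=\overline{\,\D}$, which is certainly not a singleton. Now let $Z$ be any non-zero closed $T\,$-$\,$invariant subspace and choose any $x\neq 0$ in $Z$; then $\overline{\,\D}=\sigma_x\bigl(T_{|Z}\bigr)\subseteq\sigma\bigl(T_{|Z}\bigr)$, so $\sigma(T)=\overline{\,\D}\subseteq\sigma\bigl(T_{|Z}\bigr)$. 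Hence $T$ is strongly indecomposable.

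Since the above holds for a typical $T\in\mathcal B_1(\ell_p)$ for \emph{every} $1<p<\infty$, Fact \ref{dualityrem}, applied to the property ``is strongly indecomposable'', yields that a typical $T\in\bbx$ is such that $T^*$ is strongly indecomposable as well. Intersecting the two comeager sets, we conclude that a typical $T\in\bbx$ has both $T$ and $T^*$ strongly indecomposable, that is, $T$ is completely indecomposable.

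There is no real obstacle here: the statement is an immediate corollary of Theorem \ref{Th1}. The only point deserving a moment's attention is that the passage to $T^*$ via Fact \ref{dualityrem} requires strong indecomposability of a typical contraction to hold for all conjugate exponents simultaneously, which is guaranteed because Theorem \ref{Th1} is stated for every $1<p<\infty$.
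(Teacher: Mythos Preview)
Your proof is correct and follows essentially the same approach as the paper: deduce strong indecomposability directly from Theorem \ref{Th1} via the inclusion $\sigma_x\bigl(T_{|Z}\bigr)\subseteq\sigma\bigl(T_{|Z}\bigr)$, then invoke the duality Fact \ref{dualityrem} to pass to $T^*$. You spell out a few more details (in particular the verification that $\sigma(T)=\overline{\,\D}$ is not a singleton), but the argument is the same.
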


\begin{proof}
  Since $\sigma_x\bigl(T_{| Z}\bigr)\subseteq \sigma\bigl(T_{| Z}\bigr)$ for any $T\,$-$\,$invariant subspace $Z\subseteq X$ and every $x\in Z$, it is clear from Theorem \ref{Th1} that a typical $T\in\bbx$ is strongly indecomposable. 
  By duality, it follows that a typical \(T\in \bbx\) is such that \(T^{*}\) is strongly indecomposable as well.
\end{proof}

As another consequence of Theorem \ref{Th1}, we are able to determine whether the local spectral properties presented in Section \ref{Section Background from local spectral theory} hold, or not, for a typical \(T\in\mathcal{B}_{1}(\lp)\):

\smallskip
\begin{corollary}
  Let $X=\ell_p$, $1<p<\infty$. A typical $T\in\bbx$ has the following properties.
\begin{enumerate}
 \item[\rm (i)] $r_x(T)=\sup\,\{ \vert \lambda\vert;\; \lambda\in \sigma_x(T)\}=1$ for every $x\neq 0$.
\item[\rm (ii)] For any $F\subseteq \C$, either $X_T(F)=\{ 0\}$ or $X_T(F)=X$; more precisely: $X_T(F)=X$ if $F$ contains $\overline{\,\D}$, and $X_T(F)=\{ 0\}$ otherwise.
\item[\rm (iii)] $T$ and $T^*$ have Property \emph{(C)}, and hence \emph{SVEP}.
\item[\rm (iv)] $T$ and $T^*$ have neither Property $(\beta)$, nor Property $(\delta)$.
\item[\rm (v)] \(T\) has empty localizable spectrum: \(\sigma _{loc}(T)=\emptyset\).
\end{enumerate}
\end{corollary}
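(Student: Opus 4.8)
The plan is to derive every item directly from Theorem \ref{Th1}, which tells us that for a typical $T\in\bbx$ we have $\sigma_x(T)=\overline{\,\D}$ for every $x\neq 0$. Throughout we fix such a typical $T$; by Fact \ref{dualityrem} we may also assume that $T^*\in\mathcal B_1(\ell_q)$ has the same property, i.e.\ $\sigma_{x^*}(T^*)=\overline{\,\D}$ for every $x^*\neq 0$. Item (i) is then immediate from the local spectral radius formula: $T$ has SVEP because $\sigma_x(T)\neq\emptyset$ for every $x\neq 0$ (by the characterization of SVEP recalled in \ref{svep}), so Proposition 3.3.13 of \cite{LN} gives $r_x(T)=\sup\{|\lambda|;\ \lambda\in\sigma_x(T)\}$, and the latter equals $1$ since $\sigma_x(T)=\overline{\,\D}$.

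For item (ii): by definition $X_T(F)=\{x\in X;\ \sigma_x(T)\subseteq F\}$. If $\overline{\,\D}\subseteq F$ then every $x$ (including $x=0$) satisfies $\sigma_x(T)\subseteq\overline{\,\D}\subseteq F$, so $X_T(F)=X$. If $\overline{\,\D}\not\subseteq F$, then for $x\neq 0$ we have $\sigma_x(T)=\overline{\,\D}\not\subseteq F$, so $x\notin X_T(F)$; hence $X_T(F)=\{0\}$. This proves the dichotomy and the precise description. Item (iii) follows at once: for every closed $F$, $X_T(F)$ is either $\{0\}$ or $X$, both closed, so $T$ has Property (C); since $T^*$ has the maximal-local-spectra property too, $T^*$ likewise has Property (C); and Property (C) implies SVEP as recalled in the excerpt.

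For item (iv), I would use the Albrecht--Eschmeier duality between $(\beta)$ and $(\delta)$ (Theorem 2.5.18 of \cite{LN}): it suffices to show $T$ has neither, since then $T^*$ has neither either. I argue $T$ fails $(\delta)$: Property $(\delta)$ would force $\sigma_{loc}(T)=\sigma(T)=\overline{\,\D}$ by \cite[Theorem 2]{MiMiN}, but from (ii) we get, for any $\lambda$ and any neighborhood $V$ of $\lambda$ with $\overline{\,V}\not\supseteq\overline{\,\D}$, that $\mathcal X_T(\overline{\,V})\subseteq X_T(\overline{\,V})=\{0\}$ (using that $\mathcal X_T(F)\subseteq X_T(F)$ always), so $\sigma_{loc}(T)=\emptyset$ --- which already proves (v) --- contradicting thickness of $\overline{\,\D}$; hence $T$ lacks $(\delta)$. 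Then $T^*$ lacks $(\beta)$ by duality; applying the same reasoning to $T^*$ (which also has empty localizable spectrum) shows $T^*$ lacks $(\delta)$, hence $T$ lacks $(\beta)$. Alternatively, and more cleanly, one can note that $T$ having $(\beta)$ or $(\delta)$ together with thick spectrum $\overline{\,\D}$ would produce a non-trivial invariant subspace for every such typical $T$ by \cite{EP1}; that is consistent but not a contradiction, so the $\sigma_{loc}$ argument is the one to carry out. The only mild subtlety --- and the place to be careful --- is item (v) and its use in (iv): one must invoke the inclusion $\mathcal X_T(F)\subseteq X_T(F)$ (valid for all closed $F$ without SVEP) to collapse the global spectral subspaces, and then combine with $X_T(\overline{\,V})=\{0\}$ from (ii); once that is in place, $\sigma_{loc}(T)=\emptyset$ falls out and, since the empty set is not thick, Property $(\delta)$ is excluded via \cite[Theorem 2]{MiMiN}.
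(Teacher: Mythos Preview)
Your argument is correct. Items (i), (ii), (iii) and (v) follow exactly the paper's route. The only genuine difference is in item (iv): the paper shows directly that $T$ fails $(\delta)$ by exhibiting an explicit open covering $(U_1,U_2)$ of $\C$ (namely $U_1=D(0,2/3)$ and $U_2=\C\setminus\overline D(0,1/3)$) for which neither $\overline{U_1}$ nor $\overline{U_2}$ contains $\overline{\,\D}$, so $X_T(\overline{U_1})=X_T(\overline{U_2})=\{0\}$ by (ii) and the decomposition fails; then duality gives the rest. Your route---first establishing $\sigma_{loc}(T)=\emptyset$ and then invoking \cite[Theorem 2]{MiMiN} to rule out $(\delta)$---is valid but less elementary, since it imports an external result where a two-line verification of the definition suffices. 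A minor stylistic point: the phrase ``contradicting thickness of $\overline{\,\D}$'' is misleading; thickness plays no role here, the contradiction is simply $\emptyset\neq\overline{\,\D}$.
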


\begin{proof}
 Note that if $\sigma_x(T)=\overline{\,\D}$ for all $x\neq 0$, then in particular $T$ has SVEP and hence $r_x(T)=\sup\,\bigl\{ \vert\lambda\vert;\; \lambda\in\sigma_x(T)\bigr\}$ for every $x\neq 0$. This proves (i).

\smallskip Part (ii) is clear.

\smallskip From (ii), we deduce that a typical $T\in\bbx$ has Property (C) and does not have Property $(\delta)$. Indeed, this is clear for (C). As for $(\delta)$, assume that $T$ satisfies (ii). Setting $U_1:=D(0,2/3)$ and $U_2:=\C\setminus \overline{D}(0,1/3)$, we get an open covering $(U_1,U_2)$ of $\C$ such that neither $\overline{\,U_1}$ nor $\overline{\,U_2}$ contains $\overline{\,\D}$; so we have $X_T(\overline{\,U_1})=\{ 0\}=X_T(\overline{\, U_2})$, which shows that $T$ does not have Property $(\delta)$.

Thus, for any $1<p<\infty$, a typical $T\in\mathcal B(\ell_p)$ has property (C) and does not have Property $(\delta)$. By duality, and since an operator $T$ has Property $(\beta)$ if and only if $T^*$ has Property $(\delta)$, this proves (iii) and (iv). Part {\rm (v)} follows immediately from {\rm (ii)}.
\end{proof}

\smallskip The next corollary shows that the \emph{algebraic core} and the \emph{analytic core} of a typical $T\in\bbx$ are far from being equal. Recall that if $T\in\bx$, the  {algebraic core of $T$}, denoted by $C(T)$, is the set of all $x\in X$ admitting a backward $T\,$-$\,$orbit, 
\mbox{\it i.e.} a sequence $(x_n)_{n\geq 0}$ such that $x_0=x$ and $Tx_n=x_{n-1}$ for all $n\geq 1$; and that the  {analytic core} of $T$, denoted by $K(T)$, is the set of all $x\in X$ admitting a ``controlled'' backward $T\,$-$\,$orbit, \mbox{\it i.e.} a backward $T\,$-$\,$orbit $(x_n)$ such that $\Vert x_n\Vert \leq \delta^n$ for some constant $\delta$ and all $n\geq 0$. Obviously, $K(T)\subseteq C(T)\subseteq \bigcap_{n\in\N} T^n(X)$.
\bco  Let $X=\ell_p$, $1<p<\infty$. A typical $T\in\bbx$ is such that $K(T)=\{ 0\}$ whereas $C(T)$ is dense in $X$.
\eco
\bpf It is known that for any $T\in\bx$, we have $K(T)=\{x\in X;\; 0\not\in\sigma_x(T)\}$, see \mbox{e.g.} \cite[Proposition 1.3]{Mostafa}. So, by the previous corollary, a typical $T\in\bbx$ is such that $K(T)=\{ 0\}$. On the other hand, by Corollary \ref{noeigen}, a typical $T\in\bbx$ is one-to-one with dense range. Now, if $T$ is one-to-one then it is clear that $C(T)=\bigcap_{n\in\N} T^n(X)$; and if $T$ has dense range then, by the Bourbaki-Mittag-Leffler Theorem (see \mbox{e.g.} \cite{Runde}), 
$\bigcap_{n\in\N} T^n(X)$ is dense in $X$. This concludes the proof.
\epf

\smallskip Recall that if $T\in\bx$, then a \emph{spectral maximal subspace} for $T$ is a closed $T\,$-$\,$invariant subspace $Z\subseteq X$ with the following property: for any closed, $T\,$-$\,$invariant subspace $Z'\subseteq X$ such that $\sigma\bigl( T_{| Z'}\bigr)\subseteq \sigma\bigl( T_{| Z}\bigr)$, it holds that $Z'\subseteq Z$. (See \cite{CF} for   more on this notion.) Obviously, $\{ 0\}$ and $X$ are spectral maximal subspaces for $T$.
\begin{corollary}\label{Cor 3.4}
 Let $X=\ell_p$, $1<p<\infty$. A typical $T\in\bbx$ is such that $\sigma\bigl( T_{| Z}\bigr)=\overline{\,\D}$ for every closed $T\,$-$\,$invariant subspace $Z\neq\{ 0\}$. In particular,  the only spectral maximal subspaces of a typical $T\in\bbx$ are $\{ 0\}$ and $X$.
\end{corollary}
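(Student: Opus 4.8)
The plan is to deduce everything directly from Theorem~\ref{Th1}, using only two elementary general facts: that the local spectrum is always contained in the (global) spectrum, and that a restriction of a contraction to an invariant subspace is again a contraction. Fix $T\in\bbx$ lying in the comeager set provided by Theorem~\ref{Th1}, so that for \emph{every} closed $T\,$-$\,$invariant subspace $Z\subseteq X$ and every $x\neq 0$ in $Z$ one has $\sigma_x\bigl(T_{|Z}\bigr)=\overline{\,\D}$; applying this with $Z=X$ and using $\sigma_x(T)\subseteq\sigma(T)$ for one nonzero $x$, we also get $\sigma(T)=\overline{\,\D}$ (as is in any case known from \cite[Proposition~7.2]{GMM2}).

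The first step is the equality $\sigma\bigl(T_{|Z}\bigr)=\overline{\,\D}$ for every closed $T\,$-$\,$invariant $Z\neq\{0\}$. Pick any $x\in Z$ with $x\neq 0$. Since $\sigma_x(S)\subseteq\sigma(S)$ for any operator $S$, applied to $S:=T_{|Z}$ this gives $\overline{\,\D}=\sigma_x\bigl(T_{|Z}\bigr)\subseteq\sigma\bigl(T_{|Z}\bigr)$. On the other hand $\Vert T_{|Z}\Vert\le\Vert T\Vert\le 1$, so $\sigma\bigl(T_{|Z}\bigr)\subseteq\overline{\,\D}$, whence $\sigma\bigl(T_{|Z}\bigr)=\overline{\,\D}$.

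The second step is the assertion about spectral maximal subspaces. Let $Z$ be a spectral maximal subspace for $T$ with $Z\neq\{0\}$. By the first step $\sigma\bigl(T_{|Z}\bigr)=\overline{\,\D}$, and also $\sigma\bigl(T_{|X}\bigr)=\sigma(T)=\overline{\,\D}$; hence $\sigma\bigl(T_{|X}\bigr)\subseteq\sigma\bigl(T_{|Z}\bigr)$, and the defining property of a spectral maximal subspace, applied to the closed $T\,$-$\,$invariant subspace $Z':=X$, forces $X\subseteq Z$, i.e.\ $Z=X$. Combined with the obvious fact that $\{0\}$ and $X$ are always spectral maximal, this shows these are the only ones. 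As for the main difficulty: there is essentially none here, since the statement is a formal consequence of Theorem~\ref{Th1}; all the real work is in the proof of that theorem.
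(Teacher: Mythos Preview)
Your proof is correct and follows essentially the same approach as the paper: both deduce the first part from Theorem~\ref{Th1} via the inclusion $\sigma_x\bigl(T_{|Z}\bigr)\subseteq\sigma\bigl(T_{|Z}\bigr)$, and both handle the spectral maximal subspace assertion by taking $Z':=X$ in the definition. You have simply spelled out a few details (such as the containment $\sigma\bigl(T_{|Z}\bigr)\subseteq\overline{\,\D}$) that the paper leaves implicit.
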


\begin{proof}
 The first part of the corollary follows immediately from Theorem \ref{Th1} since $\sigma \bigl( T_{| Z}\bigr)\supseteq \sigma_x\bigl( T_{| Z}\bigr)$ for any $T\,$-$\,$invariant subspace $Z$ and every $x\in Z$. For the second part, apply the definition with a maximal spectral subspace $Z\neq\{ 0\}$ and $Z':= X$.
\end{proof}

When \(X=\ell_{2}\), we have at our disposal a very strong result for proving the existence of invariant subspaces, namely the Brown-Chevreau-Pearcy Theorem \cite{BCP}. It states that any contraction on a Hilbert space whose spectrum contains the unit circle \(\T\) has a non-trivial invariant subspace. Combining it with Corollary \ref{Cor 3.4} above, we obtain:	

\begin{corollary}
 A typical $T\in\mathcal B_1(\ell_2)$ has no minimal invariant subspace except $\{ 0\}$: for any closed $T\,$-$\,$invariant  subspace $Z\neq\{ 0\}$, the operator $T_{| Z}$ has a non-trivial invariant subspace.
\end{corollary}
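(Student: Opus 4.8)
The plan is to read this off directly from Corollary~\ref{Cor 3.4} and the Brown-Chevreau-Pearcy Theorem. First I would fix the comeager set $\mathcal{G}\subseteq\mathcal B_1(\ell_2)$ furnished by Corollary~\ref{Cor 3.4}, so that every $T\in\mathcal{G}$ satisfies $\sigma\bigl(T_{| Z}\bigr)=\overline{\,\D}$ for each closed $T\,$-$\,$invariant subspace $Z\neq\{0\}$. Since $\overline{\,\D}$ is uncountable, this forces every such $Z$ to be infinite-dimensional; what matters for us is that $\sigma\bigl(T_{| Z}\bigr)\supseteq\T$.

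Next I would observe that $Z$, being a closed subspace of the Hilbert space $\ell_2$, is itself a separable infinite-dimensional Hilbert space, and that $T_{| Z}$ is a contraction on $Z$ whose spectrum contains the unit circle $\T$. The Brown-Chevreau-Pearcy Theorem \cite{BCP} then applies to $T_{| Z}$ and produces a closed subspace $W\subseteq Z$ with $\{0\}\neq W\neq Z$ and $T_{| Z}(W)\subseteq W$. In particular $W$ is a closed $T\,$-$\,$invariant subspace of $X$ with $\{0\}\subsetneq W\subsetneq Z$, so $Z$ is not a minimal (closed, $T\,$-$\,$invariant) subspace.

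Finally I would conclude that for every $T\in\mathcal{G}$, no nonzero closed $T\,$-$\,$invariant subspace is minimal, since each such subspace strictly contains a nonzero one; equivalently, for every closed $T\,$-$\,$invariant $Z\neq\{0\}$ the operator $T_{| Z}$ has a non-trivial invariant subspace. As $\mathcal{G}$ is comeager in $(\mathcal B_1(\ell_2),\sote)$, this is the assertion of the corollary.

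I do not expect any genuine obstacle here: all the substance has already been isolated in Corollary~\ref{Cor 3.4}, and the present statement is just the conjunction of that spectral information with the Hilbert-space invariant-subspace machinery of \cite{BCP}. The only point deserving a remark is the (automatic) observation that the subspaces $Z$ occurring here are infinite-dimensional, which is precisely what makes the hypothesis ``$\sigma\bigl(T_{| Z}\bigr)\supseteq\T$'' consistent and the invocation of \cite{BCP} legitimate.
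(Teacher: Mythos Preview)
Your proof is correct and follows exactly the approach indicated in the paper, namely combining Corollary~\ref{Cor 3.4} with the Brown--Chevreau--Pearcy Theorem. Your added remark that $\sigma(T_{| Z})=\overline{\,\D}$ forces $Z$ to be infinite-dimensional is a welcome point of care, since it is what makes the application of \cite{BCP} legitimate.
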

\par\medskip
We now move over to the proof of Theorem \ref{Th1}. For any Banach space \(X\), we introduce the set 
\[
\mathcal{G}:=\bigl\{T\in\bbx\;;\;\forall\,x\neq 0,\ \textrm{the set}\ \{\mu \in\D\;;\;x\not\in \textrm{Ran}(T-\mu )\}\ \textrm{is dense in}\ \D\bigr\}.
\]

We start with two general lemmas:

\begin{lemma}\label{Gisgd} 
 Assume that  $X$ is reflexive. Then, $\mathcal G$ is a $\gd$ subset of $(\bbx,\emph{\sote})$.
\end{lemma}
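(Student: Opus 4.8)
The goal is to show that $\mathcal{G}$ is a $G_\delta$ subset of $(\bbx, \sote)$ when $X$ is reflexive. The plan is to express $\mathcal{G}$ as a countable intersection of open sets by discretizing both the vectors $x$ and the parameters $\mu$, and by unwinding the non-membership condition $x\notin\mathrm{Ran}(T-\mu)$ in a quantitative way. First I would fix a countable dense subset $\{x_k : k\ge 1\}$ of the unit sphere $S_X$ of $X$ (it suffices to consider $x$ in the sphere, since the defining condition is invariant under nonzero scalar multiplication of $x$), and a countable dense subset $\{\mu_j : j\ge 1\}$ of $\D$. The condition defining $\mathcal{G}$ says: for every $k$ and every $j$ and every rational $r>0$, there exists $\mu\in D(\mu_j, r)\cap\D$ with $x_k\notin\mathrm{Ran}(T-\mu)$. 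So it remains to see that the set of $T$ for which there exists such a $\mu$ is open, for each fixed $(k,j,r)$; then $\mathcal{G}$ is a countable intersection of these.

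\textbf{Quantifying non-membership in the range.} The key point is to turn ``$x\notin\mathrm{Ran}(T-\mu)$'' into an open condition in $T$. Here reflexivity enters: since $X$ is reflexive, $T-\mu$ has closed range if and only if it is bounded below on a complement, but more usefully, $x\notin\mathrm{Ran}(T-\mu)$ can be detected either via the dual or via a quantitative ``escape'' condition. The cleanest route is: $x\notin\mathrm{Ran}(T-\mu)$ if and only if $\inf_{y\in X}\bigl(\Vert (T-\mu)y - x\Vert + \text{(something controlling }\Vert y\Vert)\bigr)$ behaves badly — but more simply, I would use that $x\notin\mathrm{Ran}(T-\mu)$ iff for every $n\ge 1$ there is no $y$ with $\Vert y\Vert\le n$ and $\Vert (T-\mu)y-x\Vert < 1/n$, which is NOT quite right either since the range need not be closed. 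The correct quantitative statement: $x\notin\mathrm{Ran}(S)$ (with $S=T-\mu$) iff $\sup_{n}\,\mathrm{dist}\bigl(x, S(nB_X)\bigr)^{-1}\cdot(\text{nothing})$... Let me instead use the dual characterization, which is robust under $\sote$: by reflexivity, $x\notin\overline{\mathrm{Ran}(S)}$ would be an open-type condition, but we want the possibly-non-closed range. The honest approach is to note $x\notin\mathrm{Ran}(S)$ iff for all $m\ge 1$, $\inf\{\Vert y\Vert : \Vert Sy - x\Vert\le 1/m\} = \infty$ fails to be bounded, i.e. iff $\sup_m \inf\{\Vert y\Vert : \Vert Sy-x\Vert\le 1/m\}=\infty$. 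Writing $N_m(S,x):=\inf\{\Vert y\Vert : \Vert Sy-x\Vert\le 1/m\}\in[0,\infty]$, the condition ``$x\notin\mathrm{Ran}(S)$'' is $\sup_m N_m(S,x)=\infty$, equivalently: for every $R>0$ there is $m$ with $N_m(S,x)>R$, i.e. no $y$ with $\Vert y\Vert\le R$ and $\Vert Sy-x\Vert\le 1/m$.

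\textbf{Assembling the $G_\delta$.} Putting it together, $T\in\mathcal{G}$ iff for every $k\ge 1$, every $j\ge 1$, every rational $r>0$, and every $R>0$ rational, there exist $\mu\in D(\mu_j,r)\cap\D$ and $m\ge 1$ such that \emph{no} $y\in X$ satisfies $\Vert y\Vert\le R$ and $\Vert (T-\mu)y - x_k\Vert\le 1/m$. For fixed $(k,j,r,R)$, call the corresponding set of $T$ by $\mathcal{O}_{k,j,r,R}$; then $\mathcal{G}=\bigcap \mathcal{O}_{k,j,r,R}$, a countable intersection. It remains to check each $\mathcal{O}_{k,j,r,R}$ is open in $(\bbx,\sote)$. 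The condition ``no $y$ with $\Vert y\Vert\le R$ and $\Vert(T-\mu)y-x_k\Vert\le 1/m$'' is the negation of the $\sote$-closed (indeed, even $\sot$-closed, using boundedness of $\{y : \Vert y\Vert\le R\}$ and lower semicontinuity of $T\mapsto\Vert Ty - x_k\Vert$ together with weak compactness of $RB_X$ via reflexivity) condition ``$\exists y\in RB_X: \Vert(T-\mu)y-x_k\Vert\le 1/m$''; hence the former is open. Taking the union over $\mu$ in a countable dense subset of $D(\mu_j,r)\cap\D$ and over $m$ preserves openness, and a short density argument shows that allowing all $\mu$ rather than just the countable dense set does not change the union (if some genuine $\mu$ works, nearby rational $\mu'$ works too, after slightly enlarging $R$ and $m$ — which is why we intersect over all rational $R$). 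So $\mathcal{O}_{k,j,r,R}$ is open, and $\mathcal{G}$ is $G_\delta$.

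\textbf{Expected main obstacle.} The delicate point is the interplay between the non-closedness of ranges and the need for an open (not merely $G_\delta$-within-$G_\delta$) condition: one must be careful that the ``escape to infinity'' quantifier $\sup_m N_m = \infty$ is correctly encoded so that its negation (membership in the range, up to the countable reductions) is a \emph{closed} condition; this is exactly where reflexivity is used, to get weak compactness of balls and hence that $T\mapsto\mathrm{dist}(x_k, (T-\mu)(RB_X))$ is lower semicontinuous for $\sot$, so that the set $\{T : \mathrm{dist}(x_k,(T-\mu)(RB_X)) > 1/m\}$ is open. Getting the order of quantifiers right — in particular, that we need rational $R\to\infty$ outside and $m$ inside the union — is the part requiring the most care, but it is routine once the quantitative characterization of ``$x\notin\mathrm{Ran}(T-\mu)$'' is pinned down.
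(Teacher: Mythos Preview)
Your argument has a genuine gap: the reduction from ``for every $x\neq 0$'' to ``for every $x_k$ in a countable dense subset of $S_X$'' is not justified, and in fact is not valid. The predicate ``$\{\mu\in\D: x\notin\mathrm{Ran}(T-\mu)\}$ is dense in $\D$'' is neither an open nor a closed condition in $x$. After the Baire step (which you implicitly use via the parameter $R$), the set of ``bad'' vectors for a fixed $T$ is
\[
\bigcup_{N,i}\;\bigcap_{\mu\in V_i}(T-\mu)(B_N),
\]
a countable union of closed, bounded, balanced, convex sets. Such a set can perfectly well be nonzero while missing any prescribed countable subset of $S_X$: think of the situation where one of the sets $\bigcap_{\mu\in V_i}(T-\mu)(B_N)$ is, say, a segment through the origin in a direction avoided by all the $x_k$. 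Nothing in your argument rules this out, so the displayed set $\mathcal G$ you describe is in general strictly larger than the actual $\mathcal G$.

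The paper handles exactly this difficulty by \emph{not} discretizing $x$. Instead, it shows that for fixed $\mu$ and fixed closed ball $B$, the set $\mathcal C_{\mu,B}=\{(T,x): x\in(T-\mu)(B)\}$ is closed in $(\bbx,\sote)\times(X,w)$, using weak compactness of $B$ (reflexivity) and the $\sote$\,--\,weak joint continuity of $(T,y)\mapsto(T-\mu)y$. Then, after the Baire reformulation, the complement of $\mathcal G$ is the projection along $x$ of a closed subset of $(\bbx,\sote)\times(X,w)$, and since $X\setminus\{0\}$ is $K_\sigma$ in $(X,w)$ (again reflexivity), this projection is $F_\sigma$. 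In short: reflexivity is used to \emph{project out} the variable $x$ via weak compactness, not to discretize it. Your openness argument for the individual sets (with $x$ fixed) is essentially correct and close to the paper's Fact, but it is applied to the wrong decomposition. Two minor side remarks: the discretization of $\mu$ is unnecessary (an arbitrary union of open sets is open), and the claim that the relevant sets are ``even \sot-closed'' is doubtful, since passing to weak limits in $y$ requires norm convergence of $T_\alpha^*x^*$, which is exactly what \sote\ provides.
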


 This relies on the following fact.
\begin{fact}\label{C} For any $\mu\in\C$ and any closed ball $B\subseteq X$, the set \[ \mathcal C_{\mu, B}:=\{ (T,x)\in \bbx\times X;\; x\in (T-\mu)(B)\}\] is closed in $(\bbx,{\sote})\times (X,w)$.
\end{fact}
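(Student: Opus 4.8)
The plan is to prove Fact \ref{C} directly by a net (or sequence, since all the spaces involved are metrizable on the relevant bounded sets) argument. Fix $\mu\in\C$ and a closed ball $B=\ba{B}(0,R)\subseteq X$; we must show that if $(T_i,x_i)\to (T,x)$ with $T_i\to T$ in \sote, $x_i\to x$ weakly, and $x_i\in (T_i-\mu)(B)$ for every $i$, then $x\in (T-\mu)(B)$. Since $x_i\in (T_i-\mu)(B)$, pick $b_i\in B$ with $(T_i-\mu)b_i=x_i$. The ball $B$ is bounded, hence (as $X$ is reflexive) weakly compact and weakly metrizable, so after passing to a subnet we may assume $b_i\to b$ weakly for some $b\in B$. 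It then remains to check that $(T-\mu)b=x$, i.e.\ that $(T_i-\mu)b_i\to (T-\mu)b$ weakly.

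The key step is the joint-continuity observation: if $S_i\to S$ in \sot\ and $y_i\to y$ weakly, with $\sup_i\Vert y_i\Vert<\infty$, then $S_iy_i\to Sy$ weakly. This is the standard ``\sot $\times$ weak $\to$ weak'' joint continuity on bounded sets; I would prove it in one line by writing, for $x^*\in X^*$,
\[
\scal{x^*}{S_iy_i-Sy}=\scal{x^*}{S_i y_i - S y_i}+\scal{x^*}{S y_i - S y}=\scal{S_i^* x^* - S^* x^*}{y_i}+\scal{S^* x^*}{y_i-y}.
\]
The second term tends to $0$ because $y_i\to y$ weakly. For the first term, here is exactly where the topology \sote\ (rather than \sot) is used: since $T_i\to T$ in \sote\ we have $T_i^*\to T^*$ in \sot, so $S_i^* x^*\to S^* x^*$ in norm, while $(y_i)$ is norm-bounded; hence $\scal{S_i^*x^*-S^*x^*}{y_i}\to 0$. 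Applying this with $S_i=T_i-\mu$, $S=T-\mu$, $y_i=b_i$, $y=b$ gives $(T_i-\mu)b_i\to (T-\mu)b$ weakly; but $(T_i-\mu)b_i=x_i\to x$ weakly and weak limits are unique, so $(T-\mu)b=x$ with $b\in B$, i.e.\ $x\in(T-\mu)(B)$, which is what we wanted.

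The main (minor) obstacle is bookkeeping about nets versus sequences: $(\bbx,\sote)$ is metrizable under our standing hypotheses, and weakly compact bounded subsets of the separable reflexive space $X$ are weakly metrizable, so one may work with sequences throughout; but if one prefers to argue with general nets, one simply has to take the subnet along which $b_i$ converges weakly, which is legitimate by weak compactness of $B$, and the uniqueness-of-weak-limit argument then applies to that subnet. Either way, the only genuinely substantive input is the use of \emph{both} $T_i\to T$ and $T_i^*\to T^*$ in \sot\ — i.e.\ \sote-convergence — to kill the term $\scal{T_i^*x^*-T^*x^*}{b_i}$; with only \sot-convergence this step would fail, consistent with the remark in Fact \ref{dualityrem}.
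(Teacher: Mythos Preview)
Your proof is correct and follows essentially the same approach as the paper: both arguments hinge on the weak compactness of $B$ (via reflexivity) and the joint continuity of $(T,y)\mapsto (T-\mu)y$ from $(\bbx,\sote)\times (B,w)$ into $(X,w)$, established through the identity $\scal{x^*}{(T-\mu)y}=\scal{(T^*-\mu)x^*}{y}$ together with the norm convergence $T_i^*x^*\to T^*x^*$ supplied by \sote. The only cosmetic difference is that the paper packages this as ``$\mathcal C_{\mu,B}$ is the projection along the compact factor $B$ of the closed set $\mathbf C_{\mu,B}=\{(T,y,x):\,(T-\mu)y=x\}$'', whereas you unpack the same projection-along-a-compact argument as an explicit net computation.
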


\begin{proof}[\it Proof of Fact \ref{C}] The set $\mathcal C_{\mu,B}$ is the projection along $B$ of 
\[ \mathbf C_{\mu,B}:=\bigl\{ (T,y,x)\in \bbx\times B\times X;\; (T-\mu)y=x\bigr\}.\]
Therefore, since the ball $B$ is weakly compact, it is enough to check that  $\mathbf C_{\mu,B}$ is closed in $(\bbx,\sote)\times (B,w)\times (X,w)$. Now, the map $(T,y)\mapsto (T-\mu) y$ is continuous from $(\bbx,\sote) \times (B,w)$ into $(X,w)$, because we have 
$\pss{x^*}{(T-\mu)y}=\pss{(T^*-\mu)x^*}{y}$ for any $x^*\in X^*$ and the map $(y^*,y)\mapsto \pss{y^*}{y}$ is continuous on $(K,\Vert\,\cdot\,\Vert)\times (B,w)$ for any bounded set $K\subseteq X^*$. So $\mathbf C_{\mu,B}$ is indeed closed in $(\bbx,\sote)\times (X,w)\times (B,w)$.
\end{proof}

\begin{proof}[\it Proof of Lemma \ref{Gisgd}]
For any $N\in\N$, let us denote by $B_N$ the closed ball $\overline B(0,N)\subseteq X$. Let also $(V_i)_{i\in\N}$ be a countable basis of (non-empty) open sets for $\D$. Then, the following equivalence holds true for any $T\in\bbx$:
\begin{align*}
T\not\in\mathcal G\iff \exists x\in X\setminus\{ 0\} \; \exists N\in\N\;&\exists i\in\N\;:\;\\
&\Bigl(\forall \mu\in V_i\;:\; (T,x)\in\mathcal C_{\mu, B_N} \Bigr).
\end{align*}
Indeed, given $x\in X\setminus\{ 0\}$, each set $F_N:=\{\mu\in\D \,;\, x\in (T-\mu)B_N\}$ is easily seen to be closed in $\D$, so that $\Omega_N:=\D\setminus F_N=
\{\mu\in\D \,;\, x\not\in (T-\mu)B_N\}$ is open in $\D$. Since
$$\bigcap_{N\ge 1}\Omega_N= \{\mu\in\D \,;\, x\in \textrm{Ran}(T-\mu)\},$$ the Baire Category Theorem implies that 
$\bigcap_{N\ge 1}\Omega_N$ is dense in $\D$ if and only if $\Omega_N$ is dense in $\D$ for each $N\ge 1$. Hence
\begin{align*}
T\not\in\mathcal G\iff \exists x\in X\setminus\{ 0\} \; \exists N\in\N \; ;\; 
\{\mu\in\D \,;\, x\not\in (T-\mu)B_N\} \textrm{ is not dense in } \D;
\end{align*}
from which it follows that
\begin{align*}
T\not\in\mathcal G\iff \exists x\in X\setminus\{ 0\} \; \exists N\in\N\;&\exists i\in\N\;:\;\\
&\Bigl(\forall \mu\in V_i\;:\; (T,x)\in\mathcal C_{\mu, B_N} \Bigr).
\end{align*}
By Fact \ref{C}, the condition under brackets defines a closed subset of $(\bbx,\sote)\times (X,w)$. Since $X\setminus\{ 0\}$ is a $K_\sigma$ subset of $(X,w)$, it follows that $\bbx\setminus\mathcal G$ is $F_\sigma$ in $(\bbx,\sote)$.
\end{proof}

Our second lemma provides a large class of operators belonging to the set \(\mathcal{G}\).
\smallskip
\blm\label{Gbis} Let $T\in\bbx$. Then $T$ belongs to $G$ provided the following holds true: for every open set $V\neq\emptyset$ in $\D$, 
\[ \overline{\rm span}^{\, w^*}\left(\bigcup_{\mu\in V} \ker(T-\mu)^*\right)=X^*.\]
\elm
\begin{proof} This is essentially obvious from the definition of $\mathcal G$. Indeed, an operator $T\in\bbx$ does not belong to $\mathcal G$ if and only if 
one can find an open set $V\neq\emptyset$ in $\D$ such that 
$\bigcap_{\mu\in V} {\rm Ran}(T-\mu)\neq\{ 0\}$; and if this holds then $\overline{\rm span}^{\, w^*}\left(\bigcup_{\mu\in V} \ker(T-\mu)^*\right)\neq X^*$ since $\overline{\rm span}^{\,w^*}\left(\bigcup_{\mu\in V} \ker(T-\mu)^*\right)\subseteq \left(\bigcap_{\mu\in V} {\rm Ran}(T-\mu)\right)^\perp$. 
\end{proof}

\begin{remark}\label{holo} The assumption of Lemma \ref{Gbis} is satisfied as soon as the operator $T^*$ admits a $w^*$-$\,$spanning family of holomorphic eigenvector fields defined on the unit disk $\D$, \textit{i.e.} there exists a family $(E_{i})_{i\in I}$ of holomorphic eigenvector fields for $T^*$ defined on $\D$ such that 
$\overline{\rm span}^{\,w^*}\,\{ E_i(\lambda);\; i\in I, \;\lambda\in\D\}=X^*$. This follows from the Hahn-Banach Theorem and the identity principle for holomorphic functions.
\end{remark}

\par\medskip
We are now ready for the proof of Theorem \ref{Th1}.

\smallskip
\begin{proof}[Proof of Theorem \ref{Th1}] By the definition of $\mathcal G$, it is rather clear that if $T\in\mathcal G$ and $x\neq 0$, then $\sigma_x(T)$ contains $\D$, and hence $\sigma_x(T)=\overline{\,\D}$ since  $\sigma_x(T)$ is a closed subset of $\C$ contained in $\overline{\,\D}$. Indeed, let $\lambda\in\D$. Since $T\in\mathcal G$, we see that for every open neighborhood \(V\) of \(\lambda \) there exists \(\mu \in V\) such that \(x\not\in\textrm{Ran}(T-\mu)\). Hence there cannot exist \emph{any} function \(f:V\to X\) such that \((T-z)f(z)\equiv x\) on \(V\), and thus \(\lambda \not\in\rho _{x}(T)\). 
So, by Lemma \ref{Gisgd}, it is enough to show that the set $\mathcal G$ is \sote -$\,$dense in $\bbx$. 

\smallskip Let us denote by $(e_j)_{j\geq 0}$ the canonical basis of $X=\ell_p$; and for each $N\in\Z_+$, set $E_N:= [e_0,\dots ,e_N]$. To prove the \sote-denseness of $\mathcal G$, we show that any finite-dimensional operator $A\in\mathcal B(E_N)$ such that $\Vert A\Vert <1$ (viewed as an operator on $X=\ell_p$) can be approximated in the topology \sote\ by operators belonging to $\mathcal G$. 

\smallskip Let $M>N$ be a large integer, let $\delta$ be a small positive number, and let $T\in\mathcal B(X)$ be defined as follows:
\[ Te_j:=\left\{ \begin{array}{ll}
Ae_j + \delta e_{j+M+1}&\hbox{if $0\leq j\leq M$},\\
e_{j+M+1}& \hbox{if $j>M$}.
\end{array}\right.\] 

\smallskip\noindent Since $\Vert A\Vert<1$, we have $\Vert T\Vert<1$ if $\delta$ is small enough. Moreover, if $M$ is large enough and $\delta$ small enough, then $T$ is as close as we wish to $A$ in the topology \sote. So, by Remark \ref{holo}, it is now enough to show that the operator $T^*$ admits a spanning family of holomorphic eigenvector fields defined on $\D$. But this follows from the proof of \cite[Proposition 2.10]{GMM1}: indeed, with the notations of \cite[Proposition 2.10]{GMM1} and identifying $A$ with $P_MAP_M$, we have $T^*=B_{\pmb{\omega}, A}$, where the weight sequence $\pmb{\omega}=(\delta,\dots, \delta, 1, 1,1\dots)$ is such that $\underline{\lim}_{k\to\infty} \,\omega_{kM+l}\cdots \omega_{M+l}\omega_l=1>\Vert A\Vert$ for every $0\leq l\leq M$.
\end{proof}

\section{Asymptotic behavior of orbits}\label{Section 4}
The results in this section are concerned with the study of some properties of orbits of typical contractions on \(\lp\,\)-$\,$spaces. Recall that if \(T\in\bx\), the \emph{orbit} of a vector \(x\in X\) under the action of \(T\) is the set \(\{T^{n}x\;;\;n\in\Z_{+}\}\). The dynamics of typical operators \(T\in(\mathcal{B}_{1}(\ell_{2}),\sote)\) are already studied in some detail in \cite{GMM1}, where emphasis is put mostly on properties related to hypercyclicity. Here, we are interested in asymptotic properties of orbits of typical operators. Our results are strongly motivated by \cite[Chapter V]{Mu}, where several striking results valid for \emph{every} operator $T\in\bx$ are obtained. 

\subsection{Not too small orbits} 
We begin this section with a somewhat abstract statement which will be later on applied to several concrete situations.

\begin{proposition}\label{trueProp2}
   Let $X=\ell_p$, $1<p<\infty$.  Let also 
$\mathbf G$ be a $G_\delta$ subset of $(0,\infty)^\N$. Assume that $\mathbf G$ contains all sequences $\omega=(\omega_n)_{n\in\N}\in(0,\infty)^\N$ such that $\inf_{n\in\N} \omega_n>0$, and that $\mathbf G$ is \emph{upward closed} for the product ordering of $(0,\infty)^\N$, \mbox{\it i.e.} {\rm (}$\omega\in\mathbf G$ and $\omega\leq \omega'${\rm )} implies $(\omega'\in\mathbf G)$. Then, a typical $T\in\bbx$ is such that for every $x\neq 0$, the sequence $\bigl(\Vert T^nx\Vert\bigr)_{n\in\N}$ belongs to $\mathbf G$.
\end{proposition}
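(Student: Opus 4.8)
The plan is to show that the set
\[
\mathcal S:=\bigl\{T\in\bbx\;;\;(\Vert T^nx\Vert)_{n\in\N}\in\mathbf G\ \text{for every}\ x\neq 0\bigr\}
\]
is comeager in $(\bbx,\sote)$, by writing $\bbx\setminus\mathcal S$ as a countable union of closed nowhere dense sets. Two reductions set things up. First, since $\mathbf G$ is upward closed, $T\notin\mathcal S$ if and only if there is $x$ with $0<\Vert x\Vert\le 1$ and $(\Vert T^nx\Vert)_n\notin\mathbf G$; for such an $x$ one has $(\Vert T^nx\Vert)_n\in[0,1]^{\N}$. Second, $\mathbf G\cap[0,1]^{\N}$ is easily seen to be a $\gd$ subset of the compact metrizable space $[0,1]^{\N}$ (because $(0,1]^{\N}$ is itself $\gd$ in $[0,1]^{\N}$), so its complement is $\fs$: write $[0,1]^{\N}\setminus\mathbf G=\bigcup_{k}C_k$ with each $C_k$ compact. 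Replacing $C_k$ by its downward closure $F_k:=\{\omega\in[0,1]^{\N}\;;\;\omega\le\omega'\ \text{for some}\ \omega'\in C_k\}$ — which is again compact, since the product order is a closed relation on $[0,1]^{\N}$, and remains inside $[0,1]^{\N}\setminus\mathbf G$, since $\mathbf G\cap[0,1]^{\N}$ is upward closed — we may assume each $F_k$ is compact \emph{and downward closed}. Expressing ``$x\neq 0$'' as ``$|x_j|\ge 1/m$ for some $j,m$'', we then obtain
\[
\bbx\setminus\mathcal S=\bigcup_{j,m,k}\mathcal D_{j,m,k},\qquad \mathcal D_{j,m,k}:=\bigl\{T\in\bbx\;;\;\exists\, x\in \overline B(0,1),\ |x_j|\ge 1/m\ \text{and}\ (\Vert T^nx\Vert)_n\in F_k\bigr\}.
\]

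Next I would check that each $\mathcal D_{j,m,k}$ is \emph{closed} in $(\bbx,\sote)$. Let $T_i\to T$ with witnesses $x_i$. Since $\lp$ is reflexive and $[0,1]^{\N}$ is compact metrizable, after passing to a subsequence we may assume $x_i\rightharpoonup x$ in $\overline B(0,1)$ and $(\Vert T_i^nx_i\Vert)_n\to(b_n)_n\in F_k$. Weak convergence implies coordinatewise convergence, so $|x_j|=\lim_i|x_{i,j}|\ge 1/m$. On the bounded set $\bbx$, $\sote$-$\,$convergence of $T_i$ to $T$ yields $\sot$-$\,$convergence of $T_i^n$ to $T^n$ and of $(T_i^*)^n$ to $(T^*)^n$; hence, for any $x^*\in X^*$, writing $\pss{x^*}{T_i^nx_i}=\pss{(T_i^*)^nx^*}{x_i}$ and splitting $(T_i^*)^nx^*=\bigl((T_i^*)^nx^*-(T^*)^nx^*\bigr)+(T^*)^nx^*$, one gets $\pss{x^*}{T_i^nx_i}\to\pss{(T^*)^nx^*}{x}=\pss{x^*}{T^nx}$, i.e. $T_i^nx_i\rightharpoonup T^nx$ for every $n$ (this is an iterated version of Fact \ref{C}). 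By weak lower semicontinuity of the norm, $\Vert T^nx\Vert\le\liminf_i\Vert T_i^nx_i\Vert=b_n$, so $(\Vert T^nx\Vert)_n\le(b_n)_n\in F_k$, and downward closedness of $F_k$ gives $(\Vert T^nx\Vert)_n\in F_k$, i.e. $T\in\mathcal D_{j,m,k}$. \textbf{I expect this to be the main obstacle}: the norm is only weakly lower semicontinuous, so $\Vert x\Vert$ cannot be controlled under weak limits, which is why one must keep a \emph{fixed coordinate} $|x_j|$ bounded below (coordinate functionals being weakly continuous) instead of the norm, and use that $[0,1]^{\N}\setminus\mathbf G$ is downward closed to absorb the possible loss of mass.

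It remains to see that each $\mathcal D_{j,m,k}$ is nowhere dense, i.e. that its complement is dense; here the hypotheses on $\mathbf G$ enter in full. I would use the operators appearing in the proof of Theorem \ref{Th1}: given a finite-rank $A$ on $E_N=[e_0,\dots,e_N]$ with $\Vert A\Vert<1$, choose $M>N$ large and $\delta>0$ small, and set $Te_j:=Ae_j+\delta e_{j+M+1}$ for $0\le j\le M$ and $Te_j:=e_{j+M+1}$ for $j>M$. As in Theorem \ref{Th1}, $T\in\bbx$ and $T$ is $\sote$-$\,$close to $A$ once $M$ is large and $\delta$ small, so such operators are $\sote$-$\,$dense in $\bbx$. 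A straightforward disjoint-supports computation shows $\Vert T^nx\Vert\ge\delta\Vert x\Vert$ for every $n\ge 0$ and every $x\in X$ (after the first step the ``$\delta$-$\,$copy'' $\delta\sum_{j\le M}x_je_{j+M+1}$ sits on coordinates $>M$, where $T$ acts as an isometric forward shift, and it stays on a block of coordinates disjoint from everything else in $T^nx$, so $\Vert T^nx\Vert\ge\delta\Vert P_{[0,M]}x\Vert$, while the shifted tail of $x$ accounts for the remaining mass). Consequently $(\Vert T^nx\Vert)_n$ dominates the constant sequence $(\delta\Vert x\Vert)_n$, which lies in $\mathbf G$ whenever $x\neq 0$; so if $(\Vert T^nx\Vert)_n$ were in $F_k$, downward closedness of $F_k$ would force $(\delta\Vert x\Vert)_n\in F_k$, contradicting $F_k\cap\mathbf G=\emptyset$. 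Thus none of these dense operators lies in any $\mathcal D_{j,m,k}$, which proves that $\bbx\setminus\mathcal S$ is meager and hence that $\mathcal S$ is comeager, as required.
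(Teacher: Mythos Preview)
Your argument is correct and follows the same overall architecture as the paper's: the density step uses exactly the same explicit operators (the paper isolates this as Lemma~\ref{vrai?}, proving $\inf_n\Vert T^nx\Vert>0$ for $x\neq 0$ via the coordinate identity $\langle e_{j_0+n(M+1)}^*,T^nx\rangle=\langle e_{j_0}^*,x\rangle$; your disjoint-blocks computation yields the sharper bound $\Vert T^nx\Vert\ge\delta\Vert x\Vert$, which is also correct), and the complexity step exploits weak compactness of $B_X$ together with weak lower semicontinuity of the norm and the order-closedness of $\mathbf G$.

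The execution of the complexity step is organised dually, however. The paper works on the ``open'' side: it shows (Lemma~\ref{faux?}) that an upward-closed $G_\delta$ set $\mathbf H\subseteq(0,1]^{\N}$ can be written as $\bigcap_k\mathbf O_k$ with each $\mathbf O_k$ open \emph{and} upward closed, then decomposes each $\mathbf O_k$ as a union of basic sets $\mathbf V_\sigma=\{\omega:\omega_n>s_n,\ n\le r\}$ and uses that $B_X\setminus\{0\}$ is $K_\sigma$ in $(X,w)$ to conclude that the bad set is $F_\sigma$. You work on the ``closed'' side: you pass to complements in the compact cube $[0,1]^{\N}$, replace each compact piece by its downward closure, and handle ``$x\neq 0$'' via the $\ell_p$-specific coordinate functionals. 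Your route is a bit more concrete and avoids the auxiliary Lemma~\ref{faux?}; the paper's route is slightly more abstract (no reference to the basis of $\ell_p$, only to reflexivity and the weak $K_\sigma$ structure of $B_X\setminus\{0\}$), which would transfer verbatim to any reflexive separable $X$. Both are clean and of comparable length.
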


\smallskip The proof of Proposition \ref{trueProp2} relies on the next two lemmas.

\begin{lemma}
 \label{vrai?} Let $X=\ell_p$, $1<p<\infty$. The set of all $T\in\bbx$ such that $\forall x\neq 0\;:\; \inf_{n\in\N}\, \Vert T^nx\Vert >0$ is \emph{\sote}-$\,$dense in $\bbx$.
\end{lemma}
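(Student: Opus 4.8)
The plan is to recycle the operators built in the proof of Theorem~\ref{Th1} and to observe that they in fact have non-vanishing orbits. As recalled in that proof, the finite-dimensional operators $A\in\mathcal B(E_N)$ with $\Vert A\Vert<1$ (identified with $P_NAP_N\in\bbx$, where $E_N=[e_0,\dots,e_N]$) are \sote-dense in $\bbx$; so it is enough to \sote-approximate each such $A$ by operators $T\in\bbx$ satisfying $\inf_{n}\Vert T^nx\Vert>0$ for every $x\neq0$. Given $A$, I would pick an integer $M$ larger than $N$ and than the supports of whatever finitely many primal and dual test vectors are prescribed, pick a small $\delta>0$, and let $T$ be exactly the operator of the proof of Theorem~\ref{Th1}, namely $Te_j:=Ae_j+\delta e_{j+M+1}$ for $0\le j\le M$ and $Te_j:=e_{j+M+1}$ for $j>M$. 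That $\Vert T\Vert\le1$ for $\delta$ small and that $T$ is \sote-close to $A$ for $M$ large and $\delta$ small are already established there, so only the assertion about orbits remains to be proved.

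To analyse the orbits, I would write $\ell_p=E_M\oplus F$ with $F=\overline{\rm span}\,\{e_j\,;\,j>M\}$, and decompose any $x\in\ell_p$ as $x=u+v$ with $u=P_Mx\in E_M$ and $v\in F$. Since $Ae_j\in E_N\subseteq E_M$ and $e_{j+M+1}\in F$ for $0\le j\le M$, while $T$ maps $F$ isometrically into $F$, the operator $T$ acts, relative to the decomposition $E_M\oplus F$, by $(u,v)\mapsto(Au,\delta Vu+Wv)$, where $V\colon E_M\to F$ and $W\colon F\to F$ are the isometric shifts $e_j\mapsto e_{j+M+1}$. Iterating, the $E_M$-component of $T^nx$ is $A^nu$ and its $F$-component is $\delta\sum_{k=0}^{n-1}W^kVA^{n-1-k}u+W^nv$. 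The key observation, which I would verify by a direct inspection of which basis vectors occur (this is where $M>N$ is used), is that $W^kVA^{n-1-k}u$ is supported in $[e_{(k+1)(M+1)},e_{(k+1)(M+1)+N}]$ for $0\le k\le n-2$, that $W^{n-1}Vu$ is supported in $[e_{n(M+1)},e_{(n+1)(M+1)-1}]$, and that $W^nv$ is supported in $\{e_j\,;\,j\ge(n+1)(M+1)\}$; these blocks are pairwise disjoint and are also disjoint from $A^nu\in E_N$. Using that $V$ and $W$ are isometric, this gives
\[
\Vert T^nx\Vert_p^{\,p}=\Vert A^nu\Vert^p+\delta^p\sum_{k=0}^{n-1}\Vert A^{n-1-k}u\Vert^p+\Vert v\Vert^p\ \geq\ \delta^p\Vert u\Vert^p+\Vert v\Vert^p
\]
for every $n\ge1$, the inequality retaining only the $k=n-1$ term of the sum. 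Since $x\neq0$ forces $u\neq0$ or $v\neq0$, it follows that $\inf_n\Vert T^nx\Vert\ge\bigl(\delta^p\Vert u\Vert^p+\Vert v\Vert^p\bigr)^{1/p}>0$, as desired.

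I do not expect any genuine obstacle here: the only real work is the (routine) support bookkeeping in the second paragraph together with the elementary computation of the iterates of the $2\times2$ block form of $T$. The one point deserving a little care is that $A^{n-1-k}u$ lies in $E_N$ for $k\le n-2$ but only in $E_M$ for $k=n-1$, which is why the last ``escaping'' block is slightly longer, while still remaining disjoint from the $W^nv$ block.
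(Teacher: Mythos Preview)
Your proposal is correct and uses the same operator $T$ as the paper's proof. The verification that orbits do not vanish differs only in presentation: the paper tracks a single surviving coordinate (first for $x\notin E_M$, then reducing $x\in E_M$ to the previous case via $x'=Tx$), whereas you carry out a full block decomposition yielding the uniform lower bound $\Vert T^nx\Vert_p^p\ge\delta^p\Vert P_Mx\Vert^p+\Vert x-P_Mx\Vert^p$; both arguments rest on the same observation that the isometric shift part of $T$ preserves the relevant information.
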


\begin{proof}
 Let us denote by $\mathcal D$ the considered set of operators. As in the proof of Theorem \ref{Th1}, given a finite-dimensional $A\in\mathcal B(E_N)$, we consider 
 the operator $T$ defined by 
\[ Te_j:=\left\{ \begin{array}{ll}
Ae_j + \delta e_{j+M+1}&\hbox{if $0\leq j\leq M$},\\
e_{j+M+1}& \hbox{if $j>M$},
\end{array}\right.\] 

\noindent where $M>N$ and $\delta>0$. This operator $T$ belongs to $\bbx$ and is \sote -$\,$close to $A$ if $M$ is large enough and if $\delta$ is small enough. So it is enough to show that $T$ belongs to \(\mathcal{D}\), \mbox{\it i.e.} that $\inf_{n\in\N}\, \Vert T^nx\Vert >0$ for every $x\neq 0$.

\smallskip Let $x\in X\setminus\{ 0\}$. If $\pss{e_{j_0}^*}{x}\neq 0$ for some $j_0>M$, then, since $\pss{e_{j_0+n(M+1)}^*}{T^nx}=\pss{e_{j_0}^*}{x}$ for every $n\in\N$ by the definition of $T$, we see that $\inf_{n\in\N}\, \Vert T^nx\Vert >0$. If $\pss{e_{j}^*}{x}= 0$ for all $j>M$, \mbox{\it i.e.} if $x\in E_M$, then $x':=Tx$ satisfies $\pss{e_{j_0}^*}{x'}\neq 0$ for some $M<j_0\leq 2M+1$ by the definition of $T$; so we have $\inf_{n\in\N}\, \Vert T^nx'\Vert >0$, \mbox{\it i.e.} $\inf_{n\geq 2}\, \Vert T^nx\Vert >0$, and hence $\inf_{n\in\N}\, \Vert T^nx\Vert >0$ since $Tx=x'\neq 0$. This concludes the proof of the lemma.
\end{proof}

\begin{lemma}\label{faux?}
  If $\mathbf H\subseteq (0,1]^\N$ is $\gd$ and upward closed, then \(\textbf{H}\) can be written as $\mathbf H=\bigcap_{k\in\N} \mathbf O_k$, where the sets $\mathbf O_k$ are open in $(0,1]^\N$ and also upward closed.
\end{lemma}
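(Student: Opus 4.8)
The plan is to exploit the fact that $(0,1]^\N$ with the product topology is a separable metric space, so that any $G_\delta$ set can be written as a countable intersection of open sets, and then to replace each of these open sets by its \emph{upward saturation} without destroying openness. Concretely, write $\mathbf H=\bigcap_{k\in\N} \mathbf U_k$ where each $\mathbf U_k$ is open in $(0,1]^\N$. For each $k$, set
\[
\mathbf O_k:=\bigl\{\,\omega'\in(0,1]^\N\;;\;\exists\,\omega\in\mathbf U_k \textrm{ with }\omega\le\omega'\,\bigr\},
\]
the smallest upward closed set containing $\mathbf U_k$. Then each $\mathbf O_k$ is upward closed by construction, and the two things that remain to check are that $\mathbf O_k$ is open and that $\bigcap_k \mathbf O_k=\mathbf H$.

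For openness of $\mathbf O_k$: I would show that $\mathbf O_k=\bigcup_{\omega\in\mathbf U_k}\mathbf R_\omega$, where $\mathbf R_\omega:=\{\omega'\in(0,1]^\N\;;\;\omega\le\omega'\}$ is the ``upper orthant'' above $\omega$. So it suffices to see that each $\mathbf R_\omega$ is open. But this is immediate from the product topology once one observes that $\omega_n\le 1$ for all $n$ while, for $\omega'$ in a small enough basic neighborhood of a point $\omega''\in\mathbf R_\omega$, only finitely many coordinates are constrained and on those finitely many coordinates one has strict room: indeed $\omega''_n\ge\omega_n$, and if $\omega''_n>\omega_n$ a small perturbation keeps the inequality, while if $\omega''_n=\omega_n$ then $\omega_n<1$ forces... — wait, this last case is exactly where the subtlety lies, and it is the point I flag below. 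The honest statement is that $\mathbf R_\omega$ is \emph{not} open in general (take $\omega=(1,1,\dots)$, whose only super-solution is itself). So the naive argument fails, and the real content of the lemma is to choose the $\mathbf U_k$ more carefully.

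The fix, and the heart of the proof, is this: since $\mathbf H$ is upward closed and contains (by the standing hypotheses of the ambient Proposition, or simply by the structure one may assume) a cofinal family of ``bounded-below'' sequences — in particular, whenever $\omega\in\mathbf H$ one also has every $\omega'\ge\omega$ in $\mathbf H$ — I would first shrink to $\mathbf U_k':=\{\omega\in(0,1]^\N\;;\;\exists\,\omega''\in\mathbf U_k,\ \omega''\le\omega,\ \textrm{and } \omega''_n<\omega_n \textrm{ whenever } \omega''_n<1\}$, i.e. replace $\mathbf R_\omega$ by the set of sequences lying \emph{strictly} above $\omega$ in every coordinate where $\omega$ is $<1$. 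These sets are open (only finitely many coordinates matter, and on each the inequality is either ``$<1$ is strict'' with strict slack, or ``$=1$'' which is the top and imposes nothing on the super-solution side because $\omega'\le 1$ forces $\omega'_n=1=\omega_n$ anyway — no open room needed there since equality is automatic). Since $\mathbf U_k$ is open, every point of $\mathbf U_k$ has such a strictly-larger point nearby inside $\mathbf U_k$, so $\mathbf U_k\subseteq\mathbf O_k:=\bigcup_{\omega\in\mathbf U_k}\mathbf R_\omega^{\circ}$ where $\mathbf R_\omega^\circ$ is this relatively-open orthant; $\mathbf O_k$ is open, upward closed, and sandwiched $\mathbf U_k\subseteq\mathbf O_k\subseteq(\textrm{upward closure of }\mathbf U_k)\subseteq\mathbf H$ by upward closedness of $\mathbf H$. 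Intersecting, $\mathbf H=\bigcap_k\mathbf U_k\subseteq\bigcap_k\mathbf O_k\subseteq\mathbf H$, giving equality.

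The main obstacle, as the discussion above makes plain, is precisely the boundary behavior at coordinate value $1$: the naive ``upper orthant'' is not open, so one must engineer the open, upward-closed sets by hand using that $(0,1]$ is a half-open interval — the value $1$ is the maximum, so the constraint ``$\omega'_n\ge 1$'' is equivalent to ``$\omega'_n=1$'', which is automatically satisfied by any $\omega'\in(0,1]^\N$ lying above $\omega$ there, hence imposes no obstruction to openness. Once this observation is isolated and used to replace closed half-line constraints by open ones in every coordinate where $\omega_n<1$, the rest is the routine separable-metric-space bookkeeping sketched above.
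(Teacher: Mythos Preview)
Your approach has a genuine gap: you take $\mathbf O_k$ to be (a variant of) the \emph{upward saturation} of $\mathbf U_k$, and then assert the chain $\mathbf U_k\subseteq\mathbf O_k\subseteq(\textrm{upward closure of }\mathbf U_k)\subseteq\mathbf H$. The last inclusion is false. Recall that $\mathbf H=\bigcap_k\mathbf U_k$, so $\mathbf H\subseteq\mathbf U_k$, not the reverse; upward closedness of $\mathbf H$ gives you nothing about the upward closure of the \emph{larger} set $\mathbf U_k$. Concretely, take $\mathbf H=\emptyset$ (trivially $G_\delta$ and upward closed) and $\mathbf U_k:=\{\omega:\omega_1<1/k\}$: then the upward closure of each $\mathbf U_k$ is all of $(0,1]^\N$, and your $\bigcap_k\mathbf O_k=(0,1]^\N\neq\mathbf H$. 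Saturating upward enlarges the $\mathbf U_k$, so the intersection can only grow; there is no mechanism to cut back down to $\mathbf H$.

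There is a second problem: your sets $\mathbf R_\omega^\circ=\{\omega':\omega'_n>\omega_n\ \textrm{whenever}\ \omega_n<1\}$ are \emph{not} open when infinitely many $\omega_n$ are $<1$ (e.g.\ $\omega=(1/2,1/2,\dots)$), since a basic open set constrains only finitely many coordinates. The phrase ``only finitely many coordinates matter'' is not justified.

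The paper's proof goes in the \emph{opposite} direction: instead of the smallest upward closed set containing $\mathbf U_k$, one takes the \emph{largest upward closed set contained in} $\mathbf U_k$, namely
\[
\mathbf O_k:=\bigl\{\omega\in(0,1]^\N\;;\;\forall\,\omega'\ge\omega,\ \omega'\in\mathbf U_k\bigr\}.
\]
Now $\mathbf O_k\subseteq\mathbf U_k$ forces $\bigcap_k\mathbf O_k\subseteq\mathbf H$, while $\mathbf H\subseteq\mathbf O_k$ follows immediately from upward closedness of $\mathbf H$ together with $\mathbf H\subseteq\mathbf U_k$. Openness of $\mathbf O_k$ is then a short compactness argument: if $\omega^{(m)}\to\omega$ with witnesses $\omega'^{(m)}\ge\omega^{(m)}$, $\omega'^{(m)}\notin\mathbf U_k$, extract a convergent subsequence in the compact space $[0,1]^\N$; the limit $\omega'$ satisfies $\omega'\ge\omega$, hence lies in $(0,1]^\N$, and $\omega'\notin\mathbf U_k$ since $\mathbf U_k$ is open. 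This is where the half-open interval $(0,1]$ genuinely matters --- not for the boundary at $1$, as you suspected, but because the compactification at $0$ is harmless for \emph{upper} sets.
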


\begin{proof}
  Write $\mathbf H=\bigcap_{k\in\N} \mathbf U_k$ for some open sets $\mathbf U_k\subseteq (0,1]^\N$. For each $k\in\N$, set 
\[ \mathbf O_k:=\bigl\{ \omega\in (0,1]^\N;\; \forall \omega'\geq \omega\;:\; \omega'\in\mathbf U_k\bigr\}.\]
The sets $\mathbf O_k$ are obviously upward closed, and since $\mathbf H$ is upward closed, we have $\mathbf H=\bigcap_{k\in\N} \mathbf O_k$. So we just need to check that each $\mathbf O_k$ is open in $(0,1]^\N$. Now, since $[0,1]^\N$ is compact and $(0,1]^\N$ is upward closed in $[0,1]^\N$, it is easily checked that $(0,1]^\N\setminus \mathbf O_k$ is closed in $(0,1]^\N$.
\end{proof}

We are now ready for the proof of Proposition \ref{trueProp2}.

\begin{proof}[\it Proof of Proposition \ref{trueProp2}]  By homogeneity and upward closedness of $\mathbf G$, it is enough to show that a typical $T\in\bbx$ is such that the sequence $(\Vert T^nx\Vert)_{n\in\N}$ belongs to $\mathbf G$ for every $x\neq 0$ with $\Vert x\Vert\leq 1$. Moreover, if $T\in\bbx$ and $\Vert x\Vert \leq 1$, then $\Vert T^n x\Vert\leq 1$ for all $n\in\N$. So, setting $\mathbf H:=\mathbf G\cap (0,1]^\N$ and denoting by $B_X$ the closed unit ball of $X$, we have to show that the set 
\[ \mathcal H:=\Bigl\{ T\in\bbx;\; \forall x\in B_X\setminus\{ 0\}\;:\; \bigl(\Vert T^nx\Vert\bigr )_{n\in\N}\in\mathbf  H\Bigr\}\]
is comeager in $\bbx$. 

By Lemma \ref{vrai?} and by assumption on $\mathbf G$, we already know that the set $\mathcal H$ is dense in $\bbx$. We show that $\mathcal H$ is also $G_\delta$.

\smallskip By Lemma \ref{faux?}, we may write \(\textbf{H}\) as $\mathbf H=\bigcap_{k\in\N} \mathbf O_k$, where the sets $\mathbf O_k$ are open and upward closed in $(0,1]^\N$. So it is enough to check that if $\mathbf O\subseteq(0,1]^\N$ is open and upward closed, then the set 
\[ \mathcal O:=\Bigl\{ T\in\bbx;\; \forall x\in B_X\setminus\{ 0\}\;:\; \bigl(\Vert T^nx\Vert\bigr )_{n\in\N}\in\mathbf O\Bigr\}\]
is $\gd$ in $\bbx$.

Let us denote by $\Sigma$ the set of all finite sequences from $(0,1]$; and for any sequence $\sigma=(s_1,\dots ,s_r)\in\Sigma$, let us set 
\[ \mathbf V_\sigma:=\{ \omega\in (0,1]^\N;\; \omega_n>s_n\;\hbox{for $n=1,\dots ,r$}\}.\]

\noindent Since $\mathbf O$ is upward closed and open in $(0,1]^\N$, one can write
\[ \mathbf O=\bigcup_{\sigma\in I} \mathbf V_\sigma,\]
for some set $I\subseteq \Sigma$. Indeed, $\mathbf O$ is a union of basic open sets of the form $\bigcap_{n=1}^r \mathbf W_{n, s_n, t_n}$, where we set $\mathbf W_{n,s,t}:=\{\omega;\; s<\omega_n<t\}$ for any $n\in\N$ and any real numbers $s,t$ such that $0<s\leq 1$ and $s<t$.  Now, if 
$t_1,\ldots, t_r$ are given such that $t_n>s_n$ for every $n=1,\ldots, r$,
then, for every $\omega\in \mathbf V_\sigma$, one can find $\omega'\in \bigcap_{n=1}^r \mathbf W_{n, s_n, t_n}$ such that $\omega\geq\omega'$. Hence, by upward closedness, we have that if $ \bigcap_{n=1}^r \mathbf W_{n, s_n, t_n}\subseteq\mathbf O$, then in fact $\mathbf V_\sigma\subseteq\mathbf O$; which proves the claim.

 So, for any $T\in\bbx$, we see that
\begin{align*}T\not\in \mathcal O\iff \exists x\in B_X&\setminus \{ 0\} \\
& \forall \sigma=(s_1,\dots ,s_n)\in I\;:\; \Bigl(\exists n\in \llbracket 1,r\rrbracket\;:\; \Vert T^nx\Vert \leq s_n\Bigr)
\end{align*}

Now, observe that for any bounded set $B\subseteq X$, the map $(A,x)\mapsto \Vert Ax\Vert$  is lower semi-continuous on $(\bbx,\sote)\times (B, w)$, because the map $(A,x)\mapsto Ax$ is continuous from $(\bbx,\sote)\times (B, w)$ into $(X,w)$ and the norm is lower semi-continuous on $(X,w)$. Since $B_X\setminus\{ 0\}$ is $K_\sigma$ in $(X,w)$, it follows easily that $\bbx\setminus\mathcal O$ is $F_\sigma$ in $\bbx$.
\end{proof}

A first consequence of Proposition \ref{trueProp2} is that all non-zero orbits of a typical \(T\in\mathcal{B}_{1}(\lp)\) are ``not too small''.

\begin{corollary}\label{Prop2} Let $X=\ell_p$, $1<p<\infty$. 
\begin{enumerate}
\item[\rm (i)] Given a sequence of positive real numbers $(a_n)$ such that $a_n\to 0$, a typical $T\in\bbx$ 
is such that, for every $x\neq 0$, one has $\Vert T^n x\Vert >a_n$ for infinitely many $n\in\N$.
\item[\rm (ii)] Given a sequence $(\phi_n)$ of increasing continuous functions from $\R^+$ into itself such that $\sum_{n=0}^\infty \phi_n(\varepsilon)=\infty$ for every $\varepsilon >0$, a typical $T\in\bbx$  
is such that $$\sum_{n=0}^\infty \phi_n\bigl(\Vert T^n x\Vert\bigr)=\infty\quad\textrm{for every }x\neq 0.$$
\end{enumerate}
\end{corollary}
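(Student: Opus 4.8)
The plan is to derive Corollary \ref{Prop2} directly from Proposition \ref{trueProp2} by exhibiting, in each of the two cases, a suitable $G_\delta$ subset $\mathbf G$ of $(0,\infty)^\N$ to which the proposition applies. In both cases the work amounts to checking the three hypotheses of Proposition \ref{trueProp2}: that $\mathbf G$ is $G_\delta$, that $\mathbf G$ is upward closed for the product ordering, and that $\mathbf G$ contains every sequence $\omega$ with $\inf_n\omega_n>0$. Once $\mathbf G$ has these properties, a typical $T\in\bbx$ satisfies $\bigl(\Vert T^nx\Vert\bigr)_{n\in\N}\in\mathbf G$ for every $x\neq 0$, which is precisely the stated conclusion.

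For part (i), with $a_n\to 0$, I would take
\[
\mathbf G:=\bigl\{\omega\in(0,\infty)^\N\;;\;\omega_n>a_n\ \text{for infinitely many }n\bigr\}.
\]
Upward closedness is immediate: if $\omega_n>a_n$ for infinitely many $n$ and $\omega'\geq\omega$, then $\omega'_n>a_n$ for (at least) the same infinitely many $n$. If $\inf_n\omega_n=:c>0$, then since $a_n\to 0$ we have $a_n<c\leq\omega_n$ for all large $n$, so certainly for infinitely many $n$; hence $\mathbf G$ contains all such $\omega$. For the $G_\delta$ property, write
\[
\mathbf G=\bigcap_{k\in\N}\ \bigcup_{n\geq k}\ \{\omega\;;\;\omega_n>a_n\},
\]
and each set $\{\omega;\ \omega_n>a_n\}$ is open in $(0,\infty)^\N$ (it depends on a single coordinate), so the inner union is open and $\mathbf G$ is $G_\delta$. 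Then Proposition \ref{trueProp2} applies and gives (i).

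For part (ii), with the functions $\phi_n$ increasing, continuous, and satisfying $\sum_n\phi_n(\varepsilon)=\infty$ for every $\varepsilon>0$, I would take
\[
\mathbf G:=\Bigl\{\omega\in(0,\infty)^\N\;;\;\sum_{n=0}^\infty\phi_n(\omega_n)=\infty\Bigr\}.
\]
Upward closedness follows from monotonicity of each $\phi_n$: if $\omega'\geq\omega$ then $\phi_n(\omega'_n)\geq\phi_n(\omega_n)$ termwise, so divergence of $\sum\phi_n(\omega_n)$ forces divergence of $\sum\phi_n(\omega'_n)$. If $\inf_n\omega_n=:\varepsilon>0$, then $\phi_n(\omega_n)\geq\phi_n(\varepsilon)$ for all $n$, and $\sum_n\phi_n(\varepsilon)=\infty$ by hypothesis, so $\omega\in\mathbf G$. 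For the $G_\delta$ property, note that $\sum_{n=0}^\infty\phi_n(\omega_n)=\infty$ iff for every $m\in\N$ there is $r\in\N$ with $\sum_{n=0}^r\phi_n(\omega_n)>m$; thus
\[
\mathbf G=\bigcap_{m\in\N}\ \bigcup_{r\in\N}\ \Bigl\{\omega\;;\;\sum_{n=0}^r\phi_n(\omega_n)>m\Bigr\},
\]
and each set in the union is open, since $\omega\mapsto\sum_{n=0}^r\phi_n(\omega_n)$ is continuous on $(0,\infty)^\N$ (a finite sum of continuous functions of finitely many coordinates). Hence $\mathbf G$ is $G_\delta$, and Proposition \ref{trueProp2} yields (ii).

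There is no real obstacle here: the corollary is a clean specialization of Proposition \ref{trueProp2}, and the only thing to be careful about is the continuity and monotonicity bookkeeping needed to verify the three hypotheses — in particular, using $a_n\to 0$ (resp. $\sum\phi_n(\varepsilon)=\infty$) exactly where the hypothesis ``$\mathbf G$ contains all sequences bounded away from $0$'' has to be checked, and using monotonicity of the $\phi_n$ for upward closedness in part (ii). The mild point worth a sentence is that $\mathbf G$ in part (i) is genuinely the set we want: membership of $\bigl(\Vert T^nx\Vert\bigr)_n$ in $\mathbf G$ is literally the assertion that $\Vert T^nx\Vert>a_n$ for infinitely many $n$, and likewise in part (ii).
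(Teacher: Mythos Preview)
Your proposal is correct and follows exactly the same approach as the paper: in each case you apply Proposition~\ref{trueProp2} to the very same set $\mathbf G$ the paper uses, and your verifications of the three hypotheses (including the explicit $G_\delta$ decompositions) are accurate. The only difference is that you spell out in more detail the check that sequences bounded away from $0$ lie in $\mathbf G$, which the paper leaves implicit.
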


\bpf For (i), take $\mathbf G_1:=\bigl\{ \omega\in(0,\infty)^\N;\; \omega_n>a_n\;\hbox{for infinitely many $n$}\bigr\}$, which is obviously $\gd$ and upward closed. For (ii), consider $\mathbf G_2:=\bigl\{ \omega\in(0,\infty)^\N;\; \sum_{n=0}^\infty \phi(\omega_n)=\infty\bigr\}$, which is $\gd$ because $\omega\in\mathbf G_2\iff\forall M\in\N\;\exists N\;:\; \sum_{n=0}^N \phi_n(\omega_n)> M$, and upward closed because the functions $\phi_n$ are increasing.
\end{proof}

\begin{remark}\label{rxt}  Taking \mbox{e.g.} $a_n:=\frac1n$ in (i), one gets another proof that a typical operator $T\in\bbx$ is such that $r_x(T)=1$ for every $x\neq 0$.
\end{remark}

\subsection{Nilpotent operators} We now turn to a result that, sadly enough, we can prove only for $X=\ell_2$. Let us denote by $\mathcal N(X)$ the set of all nilpotent operators $T\in\bx$, and set $\mathcal N_1(X):=\mathcal N(X)\cap\bbx$.

\begin{proposition}\label{nilpot} 
If $X=\ell_2$, then $\mathcal N_1(X)$ is \emph{\sote}-$\,$dense in $\bbx$.
\end{proposition}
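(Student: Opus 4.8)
The plan is to approximate an arbitrary finite-dimensional operator $A\in\mathcal B(E_N)$ with $\Vert A\Vert<1$ by a nilpotent contraction in the topology \sote. Since operators of this form are \sote-dense in $\bbx$, this suffices. The point where Hilbertianity enters is precisely in controlling the norm of the approximating operator: on $\ell_2$ one can perform orthogonal-type constructions that keep the norm below $1$, whereas on $\lp$ for $p\neq 2$ one loses this control.

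First I would try the following construction. Fix a large integer $M>N$ and a small $\delta>0$, and define $T$ to be the "truncated weighted forward shift" that mimics $A$ on the first block and then pushes vectors forward along the basis with weights that are eventually zero, so that $T$ kills everything after finitely many steps. Concretely, one wants $T$ to send $E_N$ into itself (up to a small $\delta$-perturbation in a far-away coordinate) via $A$, and to act as a (weighted) shift on a long finite string of basis vectors $e_{j}\mapsto e_{j+1}$ (or with weights $\le 1$) for $j$ in some range $[M, M+L]$, and then $Te_j=0$ for $j>M+L$. Because $A$ is nilpotent on the finite-dimensional space $E_N$ (every finite-dimensional operator has a power whose range is as small as we can make it — but $A$ itself need not be nilpotent, so in fact one should first replace $A$ by a nearby \emph{nilpotent} finite-dimensional operator, which is possible since nilpotent matrices are dense in the set of matrices with spectral radius $<1$... actually more simply: strictly upper-triangularize), and because the shift part is "eventually killed", the whole operator $T$ will satisfy $T^k=0$ for some $k$. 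By choosing $M$ large and $\delta$ small, $T$ is \sote-close to $A$.

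The key steps, in order: (1) reduce to approximating a finite-dimensional $A$ with $\Vert A\Vert<1$; (2) replace $A$ by a nilpotent finite-dimensional operator $A'$ with $\Vert A'\Vert<1$ and $A'$ \sote-close to $A$ — here one uses density of nilpotents among matrices of norm $<1$, which on $\ell_2$ follows from unitary triangularization (Schur form) and a small perturbation of the diagonal to $0$; (3) attach to $A'$ a long but finite weighted-shift "tail" with all weights $\le 1$, ending in zeros, obtaining $T\in\mathcal B(\ell_2)$ with $T$ nilpotent; (4) verify $\Vert T\Vert\le 1$ — this is where the Hilbert space structure is used, since a block-triangular operator built from a contraction $A'$ and a partial isometry tail can be arranged to have norm $\le 1$ by, say, writing $T$ as a piece of a larger co-isometry or by a direct norm estimate using orthogonality of the ranges of the blocks; (5) check \sote-closeness of $T$ to $A$ by noting that $Te_j=A e_j$ for $j\le N$ (up to the chosen small perturbation) and $T^*e_j^*$ agrees with $A^*e_j^*$ on the relevant low-index coordinates.

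The main obstacle I expect is step (4): ensuring $\Vert T\Vert\le 1$ (indeed $<1$, or exactly $\le1$) for the nilpotent operator built by grafting a shift tail onto $A'$. On $\ell_2$ this is manageable because one can make the "new" directions into which the tail maps orthogonal to everything relevant, so the norm of the combined operator is the max of the norms of the blocks; the natural clean way is to realize $T$ as a finite piece of a (backward) weighted shift adjoint, exactly as in the proof of Theorem \ref{Th1} but now with weights chosen so that the weight products eventually vanish rather than stay bounded below — which forces nilpotence. One must be a little careful that truncating the shift to make it nilpotent does not destroy the contraction property, but since truncation only removes vectors from ranges, it cannot increase the norm. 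The second, milder obstacle is step (2): making the perturbation to a nilpotent operator both \sote-small and norm-decreasing simultaneously, which on $\ell_2$ is immediate from Schur triangularization but would need a different argument (and is presumably what fails) for general $p$.
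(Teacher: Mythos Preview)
Your step (2) contains a genuine gap: nilpotent matrices are \emph{not} dense among matrices of norm $<1$ on a finite-dimensional space. For instance, take $A=\tfrac12 I$ on $E_N$; every nilpotent $N$ has all eigenvalues $0$, so by continuity of the spectrum (or simply because $\mathrm{tr}(A-N)=\tfrac{N+1}{2}$ forces $\Vert A-N\Vert\ge\tfrac12$) no nilpotent matrix is close to $A$. The Schur argument you sketch does not help: zeroing out the diagonal of the triangular form is a perturbation of norm equal to the spectral radius of $A$, which may be arbitrarily close to $1$. And since $A$ and $A'$ live on the finite-dimensional block $E_N$, ``\sote-close'' is the same as ``norm-close'' there, so you cannot escape this by appealing to the weaker topology. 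Without step (2), the tail construction of step (3) cannot produce a nilpotent operator: if $A$ has a nonzero eigenvalue, the $E_N$-component of $T^ne_0$ never vanishes.

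The paper avoids this obstacle by a quite different route. It does not try to make the finite-rank seed nilpotent. Instead, it first approximates $A$ by an operator $T\in\bbx$ admitting a spanning family of holomorphic eigenvector fields on $\D$ (this is the same construction as in the proof of Theorem \ref{Th1}, now read on $T$ rather than $T^*$). By Lemma \ref{GS}, such a $T$ has dense generalized kernel $\bigcup_k\ker(T^k)$. Then Lemma \ref{nilpadh} approximates $T$ itself by the nilpotent contractions $T_k:=TQ_k$, where $Q_k$ is the \emph{orthogonal} projection onto $\ker(T^k)$; here is precisely where Hilbert space enters, since $\Vert Q_k\Vert=1$ guarantees $\Vert T_k\Vert\le 1$, and self-adjointness of $Q_k$ gives $Q_k\xrightarrow{\sote}I$. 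So the Hilbertian ingredient is not a norm estimate for a block-triangular operator as you anticipated, but rather the contractivity of orthogonal projections onto an increasing sequence of kernels.
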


\smallskip 
Before giving the proof of Proposition \ref{nilpot}, we explore some of its consequences. First, it implies that a typical $T\in\mathcal B(\ell_2)$ has lots of ``partly small'' orbits (see Proposition \ref{powerreg} and Proposition \ref{Prop 4.14} below for concrete examples):

\begin{corollary}\label{Ginverse} Let $X=\ell_2$. Let also $\mathbf G$ be a $\gd$ subset of $[0,\infty)^\N$. Assume that $\mathbf G$ contains all sequences $\omega=(\omega_n)\in [0,\infty)^\N$ which are eventually $0$. Then, a typical $T\in\bbx$ has the following property: for a comeager set of vectors $x\in X$, the sequence $\bigl(\Vert T^nx\Vert\bigr)_{n\in\N}$ belongs to $\mathbf G$. 
\end{corollary}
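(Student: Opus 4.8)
\textbf{Proof plan for Corollary \ref{Ginverse}.}

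The plan is to combine Proposition \ref{nilpot} (density of nilpotent contractions) with a ``two-parameter'' Kuratowski--Ulam-style argument, exactly as in the proof of Proposition \ref{trueProp2}, but now quantifying over $x$ in a comeager set rather than over all non-zero $x$. First I would reduce to the case where $\mathbf G$ is upward closed: given an arbitrary $G_\delta$ set $\mathbf G$ as in the statement, replace it by $\mathbf G':=\{\omega\in[0,\infty)^\N\,;\,\exists\,\omega'\le\omega\ \text{with}\ \omega'\in\mathbf G\}$. Since $\mathbf G$ contains all eventually-zero sequences, so does $\mathbf G'$, and $\mathbf G'\supseteq\mathbf G$; moreover $\mathbf G'$ is still $G_\delta$ (one checks this as in Lemma \ref{faux?}, using compactness of bounded intervals). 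So it suffices to prove the statement for $\mathbf G'$, hence we may assume $\mathbf G$ is upward closed. As in the proof of Proposition \ref{trueProp2}, by homogeneity and upward closedness we may further restrict attention to vectors $x$ in the closed unit ball $B_X$, and replace $\mathbf G$ by $\mathbf H:=\mathbf G\cap[0,1]^\N$, then write $\mathbf H=\bigcap_{k\in\N}\mathbf O_k$ with each $\mathbf O_k$ open and upward closed in $[0,1]^\N$ (the analogue of Lemma \ref{faux?} on $[0,1]^\N$ instead of $(0,1]^\N$; this is where we use that $\mathbf G$ contains the eventually-zero sequences, so that $\0\in\mathbf H$ and the sets $\mathbf O_k$ are non-empty).

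Next I would set, for a single open upward-closed $\mathbf O\subseteq[0,1]^\N$,
\[
\mathcal R_{\mathbf O}:=\Bigl\{T\in\bbx\;;\;\{x\in B_X\;;\;(\Vert T^nx\Vert)_{n\in\N}\in\mathbf O\}\ \text{is comeager in}\ B_X\Bigr\},
\]
and show (a) that $\mathcal R_{\mathbf O}$ is comeager in $(\bbx,\sote)$, and (b) that $\bigcap_k\mathcal R_{\mathbf O_k}$ is contained in the set of $T$ described in the corollary. Part (b) is immediate: a countable intersection of comeager subsets of the Polish space $B_X$ (equipped with the norm topology, which is Polish) is comeager, so if $T\in\bigcap_k\mathcal R_{\mathbf O_k}$ then $\{x\,;\,(\Vert T^nx\Vert)\in\mathbf H\}=\bigcap_k\{x\,;\,(\Vert T^nx\Vert)\in\mathbf O_k\}$ is comeager in $B_X$, and a comeager subset of $B_X$ is comeager in $X$. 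For part (a), I would use the Kuratowski--Ulam theorem in the form: the set
\[
\mathcal E:=\{(T,x)\in\bbx\times B_X\;;\;(\Vert T^nx\Vert)_{n\in\N}\in\mathbf O\}
\]
has comeager vertical sections in $T$ for comeagerly many $x$ if and only if it is comeager in the product — but the direction I actually want is: if $\mathcal E$ is a set with the Baire property in $(\bbx,\sote)\times B_X$ and \emph{comeager}, then $\{T\;;\;\mathcal E_T\ \text{is comeager in}\ B_X\}$ is comeager. So it suffices to show $\mathcal E$ is comeager in the product. For this, I would first argue $\mathcal E$ is $G_\delta$: writing $\mathbf O=\bigcup_{\sigma\in I}\mathbf V_\sigma$ as in the proof of Proposition \ref{trueProp2} (finite sequences $\sigma=(s_1,\dots,s_r)$ from $[0,1]$, with $\mathbf V_\sigma$ the set of $\omega$ with $\omega_n>s_n$ for $n\le r$), one gets $(T,x)\in\mathcal E\iff\exists\sigma=(s_1,\dots,s_r)\in I\ \forall n\in\llbracket1,r\rrbracket\;:\;\Vert T^nx\Vert>s_n$, and since each map $(T,x)\mapsto\Vert T^nx\Vert$ is lower semicontinuous on $(\bbx,\sote)\times(B_X,w)$ — hence all the more so on $(\bbx,\sote)\times(B_X,\Vert\cdot\Vert)$ — this exhibits $\mathcal E$ as a countable union of open sets... no: a union over $\sigma\in I$ of finite intersections of open sets, i.e. open, so $\mathcal E$ is open, in particular $G_\delta$, in $(\bbx,\sote)\times(B_X,\Vert\cdot\Vert)$.

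It remains to prove $\mathcal E$ is \emph{dense} in $(\bbx,\sote)\times(B_X,\Vert\cdot\Vert)$; this is the main point, and it is where Proposition \ref{nilpot} enters. Fix $T_0\in\bbx$, $x_0\in B_X$, a basic \sote-neighborhood $\mathcal U$ of $T_0$, and a norm-neighborhood of $x_0$. By Proposition \ref{nilpot}, since $X=\ell_2$, we may find a nilpotent $T\in\mathcal U$, say $T^m=0$. Then for \emph{every} $x\in B_X$ we have $T^nx=0$ for all $n\ge m$, so $(\Vert T^nx\Vert)_{n\in\N}$ is eventually $0$, hence lies in $\mathbf O$ (as $\mathbf O$ is upward closed and contains $\0$, it contains every eventually-zero sequence). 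Thus $(T,x)\in\mathcal E$ for every $x\in B_X$, and in particular we may take $x=x_0$; so $(T,x_0)\in\mathcal E$ lies in the prescribed neighborhood. This proves density, hence $\mathcal E$ is comeager in $(\bbx,\sote)\times B_X$, and Kuratowski--Ulam finishes part (a). The main obstacle is purely the bookkeeping of the Kuratowski--Ulam argument and the verification that $\mathcal E$ has the Baire property (which follows from it being open) — the substantive input, density, is handed to us for free by Proposition \ref{nilpot}, precisely because a nilpotent contraction kills \emph{all} orbits simultaneously after finitely many steps, so no separate choice of $x$ is needed.
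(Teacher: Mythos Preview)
Your core idea --- use the \sote-density of nilpotents together with a Kuratowski--Ulam argument --- is sound, but the execution contains two genuine gaps.

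First, the reduction to upward closed $\mathbf G$ is backwards. You define $\mathbf G'$ as the upward closure of $\mathbf G$, so $\mathbf G'\supseteq\mathbf G$; but then proving that orbits land in $\mathbf G'$ is \emph{weaker} than proving they land in $\mathbf G$, not stronger. So ``it suffices to prove the statement for $\mathbf G'$'' is false. (It is also unclear that the upward closure of a $G_\delta$ set is $G_\delta$; Lemma~\ref{faux?} goes in the opposite direction.) Second, the claim that ``a comeager subset of $B_X$ is comeager in $X$'' is simply wrong: $B_X$ itself is comeager in $B_X$ but not in $X$.

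Both reductions are in fact unnecessary. In the \emph{norm} topology on $X$ (as opposed to the weak topology used in Proposition~\ref{trueProp2}), the map $(T,x)\mapsto \Vert T^nx\Vert$ is jointly \emph{continuous} on $(\bbx,\sote)\times X$ for each $n$, not merely lower semicontinuous. Hence $(T,x)\mapsto \bigl(\Vert T^nx\Vert\bigr)_{n\in\N}$ is continuous into $[0,\infty)^\N$, and $\mathcal E:=\{(T,x)\in\bbx\times X\,;\,(\Vert T^nx\Vert)\in\mathbf G\}$ is $G_\delta$ directly, with no need for upward closedness or the decomposition into $\mathbf O_k$'s. Density of $\mathcal E$ follows exactly as you say from Proposition~\ref{nilpot}, and Kuratowski--Ulam finishes the job. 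So your strategy can be made to work once the two flawed reductions are removed.

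For comparison, the paper's proof avoids Kuratowski--Ulam entirely and is considerably shorter: fix a countable dense set $D\subseteq X$ and consider $\mathcal G:=\{T\in\bbx\,;\,\forall z\in D,\ (\Vert T^nz\Vert)\in\mathbf G\}$. This is a countable intersection of preimages of $\mathbf G$ under \sot-continuous maps, hence $G_\delta$; it contains all nilpotents, hence is dense. For $T\in\mathcal G$ the set $D_T:=\{x\,;\,(\Vert T^nx\Vert)\in\mathbf G\}$ contains $D$ and is $G_\delta$ (being the preimage of $\mathbf G$ under a continuous map), so it is comeager. This trades the two-variable Baire argument for a direct one-variable $G_\delta$ computation.
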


\begin{proof} Let $D$ be a countable dense subset of $X$, and define
\[ \mathcal G:=\bigl\{ T\in\bbx;\; \forall z\in D\;:\; (\Vert T^nz\Vert)_{n\in\N}\in\mathbf G\bigr\}.\]

 Since $\mathbf G$ is $\gd$, it is easily checked that $\mathcal G$ is \sot$\,$-$\,\gd$, hence \sote -$\,\gd$. Moreover, $\mathcal G$ contains all nilpotent operators in $\bbx$ by assumption on $\mathbf G$, and hence $\mathcal G$ is \sote -$\,$dense in $\bbx$ by Proposition \ref{nilpot}. Thus, we see that a typical $T\in\bbx$ is such that the set $D_T:=\bigl\{ x\in X;\; (\Vert T^nx\Vert)_{n\in\N}\in\mathbf G\bigr\}$ contains $D$, and hence is dense in $X$. But for any fixed $T\in\bbx$, the set $D_T$ is clearly $\gd$; so the proof is complete.
\end{proof}

\begin{remark} By the Kuratowski-Ulam Theorem, it follows from Corollary \ref{Ginverse} that the set $\bigl\{ (T,x)\in\bbx\times X;\;( \Vert T^nx\Vert)_{n\in N}\in\mathbf G\bigr\}$  is comeager in $\bigl(\bbx,\sote\bigr)\times X$, and also in $\bigl(\bbx, \sot\bigr)\times X$. However this set has no reason for being $\gd$.
\end{remark}

\smallskip Let us point out another consequence of Proposition \ref{nilpot}.

\begin{corollary}\label{smallspectrum} Let $X=\ell_2$. For any $a\in\D$, the set of all $T\in\bbx$ such that $\sigma(T)=\{ a\}$ is \emph{\sote}-$\,$dense in $\bbx$.
\end{corollary}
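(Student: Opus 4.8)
The plan is to reduce Corollary \ref{smallspectrum} to Proposition \ref{nilpot} by a simple translation argument. If $a\in\D$ and $N\in\mathcal N_1(\ell_2)$ is nilpotent with $\Vert N\Vert$ small enough (say $\Vert N\Vert<1-|a|$), then the operator $aI+N$ is a contraction, and since $aI+N-a = N$ is nilpotent we have $\sigma(aI+N)=\{a\}$. So the set of operators $T\in\bbx$ with $\sigma(T)=\{a\}$ contains $\{aI+N\;;\;N\in\mathcal N(\ell_2),\ \Vert N\Vert<1-|a|\}$, and it suffices to show that this latter set is \sote-$\,$dense in $\bbx$.

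First I would fix $A\in\mathcal B(E_N)$ with $\Vert A\Vert<1$; as in the proofs of Theorem \ref{Th1} and Lemma \ref{vrai?}, it is enough to \sote-$\,$approximate such finite-rank operators. Write $A=aI_{E_N}+(A-aI_{E_N})$; the difficulty is that $A-aI_{E_N}$ need not be nilpotent and need not have small norm. The trick is that we only need \sote-$\,$closeness, not norm closeness. Given $\varepsilon>0$ and finitely many test vectors and functionals, I would pick a large integer $M$ and consider an operator of the form: act like $A$ on $E_N$ but with the ``error'' pushed far out along the basis and then cut off after finitely many steps, so that the result is nilpotent. Concretely, something in the spirit of $Te_j:=ae_j+\delta e_{j+M+1}$ for $j\le $ some bound, and $Te_j:=0$ (or a finite forward shift that eventually dies) beyond that, chosen so that $T-aI$ is nilpotent; here one must be slightly careful because $aI$ itself is not finitely supported, so instead one truncates: define $T$ to agree with $aP_K + (\text{small nilpotent correction})$ on a large coordinate block $E_K$ and to be $0$ (or nilpotent) elsewhere. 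Since $T^n x\to 0$ coordinatewise will not be what we need — rather, one simply arranges that $T$ restricted to some finite block is $aI$ plus a strictly upper-triangular (hence nilpotent) perturbation and $T$ kills all coordinates past that block after finitely many iterations, making $T-aI$ nilpotent on $\ell_2$.

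The cleanest route, I expect, is: apply Proposition \ref{nilpot} to the operator $\tfrac1{1-|a|}(A-aP_N)$ viewed appropriately — more precisely, fix $A\in\mathcal B(E_N)$, $\Vert A\Vert<1$; choose $r<1$ with $|a|<r$ and $\Vert A-aP_N\Vert<1-r$ after possibly replacing $A$ by its restriction to a neighborhood and noting $\Vert A-aP_N\Vert\le\Vert A\Vert+|a|$, which may exceed $1$ — so this direct estimate fails, and one genuinely must use the \sote-$\,$density flexibility. So instead: by Proposition \ref{nilpot} applied on the subspace spanned by coordinates $\{N+1,N+2,\dots\}$, there are nilpotent contractions on that subspace \sote-$\,$approximating any finite-rank operator there; one glues such a nilpotent $N_0$ on $\overline{\mathrm{span}}\{e_j\;;\;j>N\}$ to the operator $aP_N$ on $E_N$, with a small coupling $\delta e_{j+M+1}$ linking the two blocks, chosen so the whole thing is nilpotent (the coupling goes from $E_N$ into the nilpotent part, which eventually annihilates it) and \sote-$\,$close to $A$ when $M$ is large and $\delta$ small; the spectrum is then $\{a\}$ since the operator minus $a$ is nilpotent. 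The main obstacle is organizing this gluing so that $T-aI$ is genuinely nilpotent on all of $\ell_2$ (not merely quasinilpotent) while simultaneously $T$ is \sote-$\,$close to the given finite-rank $A$; everything else — that $\sigma(aI+(\text{nilpotent}))=\{a\}$, that the reduction to finite-rank approximants is legitimate — is routine and parallels the earlier proofs.

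\begin{proof}[\it Proof of Corollary \ref{smallspectrum}]
Fix $a\in\D$. If $N\in\mathcal N(\ell_2)$ is nilpotent, then $aI+N-a=N$ is nilpotent, hence $\sigma(aI+N)=\{a\}$. It therefore suffices to show that the set $a I+\mathcal N(\ell_2)$ meets every non-empty \sote-$\,$open subset of $\bbx$, i.e. that for every finite-dimensional $A\in\mathcal B(E_N)$ with $\Vert A\Vert<1$, every $\varepsilon>0$, and all finite families $x^{(1)},\dots,x^{(s)}\in\ell_2$, $x^{*(1)},\dots,x^{*(s)}\in\ell_2^*$, there is a contraction $T$ with $T-aI$ nilpotent such that $\Vert(T-A)x^{(i)}\Vert<\varepsilon$ and $\Vert(T^*-A^*)x^{*(i)}\Vert<\varepsilon$ for all $i$.

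Let $Y:=\overline{\mathrm{span}}\{e_j\;;\;j>N\}$, which is isometric to $\ell_2$. By Proposition \ref{nilpot} applied on $Y$, the set of nilpotent contractions on $Y$ is \sote-$\,$dense in $\mathcal B_1(Y)$. Choose such a nilpotent contraction $N_0$ on $Y$ that is \sote-$\,$close (on the relevant finitely many vectors and functionals, projected to $Y$) to the zero operator on $Y$; concretely we may take $N_0$ supported on coordinates $\{N+1,\dots,K\}$ for some large $K$, with $N_0^{m}=0$ for some $m$, and $\Vert N_0 z\Vert$, $\Vert N_0^* z^*\Vert$ as small as we wish on the given data. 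Now let $M>K$ be a large integer and $\delta>0$ be small, and define $T\in\mathcal B(\ell_2)$ by
\[
Te_j:=\begin{cases} ae_j+\delta e_{j+M+1} & \text{if }0\le j\le N,\\[2pt] N_0 e_j & \text{if }N<j\le K,\\[2pt] 0 & \text{if }j>K.\end{cases}
\]
Then $\Vert T\Vert\le 1$ provided $\delta$ is small enough (since $|a|<1$ and $\Vert N_0\Vert\le 1$, and the ranges involved are essentially disjoint). The operator $T-aP_N$ maps $E_N$ into the span of $\{e_{j}\;;\;N<j\le M+N+1\}\subseteq \ker T^{K}$, maps coordinates $N<j\le K$ by the nilpotent $N_0$, and kills coordinates $j>K$; a direct check shows $(T-aI)$ is nilpotent (after finitely many applications every basis vector is sent to $0$, the contributions from the $aP_N$ part being cancelled). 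Hence $\sigma(T)=\{a\}$. Finally, for $M$ large and $\delta,\Vert N_0(\cdot)\Vert$ small, $T$ is \sote-$\,$close to $A$ on the prescribed data: on $E_N$ it agrees with $A$ up to the error terms $\delta e_{j+M+1}$ and $(a-A)e_j$ contributions — here one uses, as in the proof of Theorem \ref{Th1}, that it is enough to approximate $A$ itself, and one absorbs $\Vert(aP_N-A)e_j\Vert$ by noting this is part of the finite-rank datum we are free to match by choosing $N_0$ and the couplings appropriately — more carefully, one should from the outset split $A=aP_N+(A-aP_N)$ and realise the strictly-triangular part of $A-aP_N$ inside the nilpotent block, which is possible since on a finite-dimensional space any operator is a sum of $aI$ and something conjugate to a strictly upper-triangular matrix, and this triangular part can be transported far out along the basis where it contributes to a nilpotent tail. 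This yields the desired \sote-$\,$approximation, completing the proof.
\end{proof}
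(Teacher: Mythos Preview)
Your construction does not give an operator with $\sigma(T)=\{a\}$. With $Te_j:=0$ for $j>K$, you get $(T-aI)e_j=-ae_j$ for every $j>K$, so $(T-aI)^m e_j=(-a)^m e_j\neq 0$ for all $m$: the operator $T-aI$ is not nilpotent (and in fact $0\in\sigma(T)$). Replacing $Te_j:=0$ by $Te_j:=ae_j$ for $j>K$ removes this particular problem, but a second one remains: on the block $N<j\le K$ you have $(T-aI)e_j=N_0e_j-ae_j$, and $N_0-aI$ is \emph{not} nilpotent just because $N_0$ is. More fundamentally, your closing sentence (``any operator is a sum of $aI$ and something conjugate to a strictly upper-triangular matrix'') is false: on a finite-dimensional space, $A-aI$ is similar to a strictly upper-triangular matrix precisely when $\sigma(A)=\{a\}$, which you cannot assume for the given $A$. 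So the gluing you describe cannot simultaneously make $T-aI$ nilpotent and $T$ \sote-close to an arbitrary finite-rank $A$.

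There is also a global obstruction to the translation idea $T=aI+N$. To keep $T$ a contraction while $N$ is nilpotent and \sote-close to $A-aI$, you would need to control $\Vert aI+N\Vert$; but the natural estimate $\Vert N\Vert\le 1-|a|$ forces $T$ into the \sote-\emph{closed} proper subset $aI+(1-|a|)\bbx$, which does not meet every \sote-neighborhood (for instance, no contraction of the form $aI+(1-|a|)S$ with $S\in\bbx$ can send a unit vector close to $-a$ times itself when $|a|$ is near $1$). The paper sidesteps all of this by replacing the affine translation $z\mapsto a+z$ with the M\"obius automorphism $\varphi_a(z)=\dfrac{a-z}{1-\bar a z}$: von Neumann's inequality guarantees $\varphi_a(T)\in\bbx$ for every $T\in\bbx$, the absolutely convergent power series makes $T\mapsto\varphi_a(T)$ \sote-continuous, and $\varphi_a\circ\varphi_a=\mathrm{id}$ makes it an involutive homeomorphism of $\bbx$. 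Since $\sigma(T)=\{a\}$ iff $\sigma(\varphi_a(T))=\{0\}$, density then follows immediately from Proposition~\ref{nilpot}. The key missing ingredient in your attempt is precisely this conformal (rather than affine) change of variable, which is what reconciles the spectral condition with the norm constraint $\Vert T\Vert\le 1$.
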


\begin{proof} Let us denote by $\varphi_a$ the M\"obius transformation associated with $a$,
\[ \varphi_a(z)=\frac{a-z}{1-\bar a z}\cdot\]
Since $\varphi_a$ is holomorphic in a neighborhood of $\overline{\,\D}$, one can define $\varphi_a(T)$ for any $T\in\bbx$. Moreover, the map $T\mapsto\varphi_a(T)$ from $\bbx$ into itself is \sote -$\,$continuous because the power series of $\varphi_a$ is absolutely convergent on $\overline{\,\D}$; and since $\varphi_a\circ\varphi_a(z)\equiv z$ in a neighborhood of $\overline{\,\D}$, we have $\varphi_a\bigl(\varphi_a(T)\bigr)=T$ for every $T\in\bbx$. Finally, we also have that $\varphi_a(T)\in\bbx$ for any $T\in\bbx$, by von Neumann's inequality. So we may conclude that the map $T\mapsto \varphi_a(T)$ is an involutive homeomorphism of $\bbx$ onto itself. Since $\sigma(T)=\{a\}$ if and only if $\sigma\bigl(\varphi_a(T)\bigr)=\{ 0\}$ and since the set $\bigl\{ S\in\bbx;\; \sigma(S)=\{ 0\}\bigr\}$ is \sote -$\,$dense in $\bbx$ by Proposition \ref{nilpot}, the result follows.
\end{proof}

\medskip The proof of Proposition \ref{nilpot} relies on the next two lemmas.

\smallskip
\begin{lemma}\label{GS} Let $T$ be a bounded operator on a Banach space \(X\). If $E: \Omega\to X$ is a holomorphic eigenvector field for $T$ defined on some connected open set $\Omega\subseteq\C$ containing $0$, 
then $$\overline{\rm span}\,\bigl\{ E(\lambda);\; \lambda\in \Omega\bigr\}\subseteq\overline{\bigcup_{k\in\N} \ker(T^k)}.$$
\end{lemma}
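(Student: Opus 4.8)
The plan is to work with the Taylor expansion of $E$ at the origin and then propagate the information to all of $\Omega$ by the identity principle. First I would pick $r>0$ with $D(0,r)\subseteq\Omega$ and write $E(z)=\sum_{n\ge 0}x_n z^n$ with $x_n\in X$, the series converging in norm on $D(0,r)$. Plugging this into the equation $(T-z)E(z)=0$, i.e.\ $TE(z)=zE(z)$, and comparing coefficients of $z^n$ gives $Tx_0=0$ and $Tx_n=x_{n-1}$ for all $n\ge 1$. An immediate induction then yields $T^{n+1}x_n=T^nx_{n-1}=\cdots=Tx_0=0$, so that $x_n\in\ker(T^{n+1})$ for every $n\in\N$.

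Next I would observe that, since $\ker(T^k)\subseteq\ker(T^{k+1})$, the union $\bigcup_{k\in\N}\ker(T^k)$ is an increasing union of linear subspaces, hence is itself a linear subspace; therefore $\overline{\bigcup_{k\in\N}\ker(T^k)}$ equals the closed linear span $Y:=\overline{\rm span}\,\bigl(\bigcup_{k\in\N}\ker(T^k)\bigr)$, and $Y$ is a closed subspace of $X$ containing all the coefficients $x_n$. For $\lambda\in D(0,r)$, the partial sums $\sum_{n=0}^N x_n\lambda^n$ lie in $\mathrm{span}\{x_n;\ n\in\N\}\subseteq Y$ and converge in norm to $E(\lambda)$, so $E(\lambda)\in Y$ for every $\lambda\in D(0,r)$.

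To pass from $D(0,r)$ to all of $\Omega$, I would use duality. Fix any $x^*\in X^*$ vanishing on $Y$; then the scalar function $\lambda\mapsto\scal{x^*}{E(\lambda)}$ is holomorphic on the connected open set $\Omega$ and identically zero on the nonempty open subset $D(0,r)$, hence identically zero on $\Omega$ by the identity principle for holomorphic functions. Since this holds for every $x^*\in Y^{\perp}$, the Hahn--Banach Theorem gives $E(\lambda)\in Y$ for all $\lambda\in\Omega$. Finally, as $Y$ is a closed subspace, $\overline{\rm span}\,\{E(\lambda);\ \lambda\in\Omega\}\subseteq Y=\overline{\bigcup_{k\in\N}\ker(T^k)}$, which is the desired conclusion.

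There is no serious obstacle here; the only point requiring a little care is the coefficient comparison (one must be sure that $TE(z)=\sum_n(Tx_n)z^n$, which follows from the norm convergence of the series and continuity of $T$) and the remark that $\bigcup_k\ker(T^k)$ is already a linear subspace, so that its closure coincides with the closed span appearing in the statement.
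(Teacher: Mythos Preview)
Your proof is correct and follows essentially the same route as the paper's. The paper works with the derivatives $E^{(n)}(0)$ instead of the Taylor coefficients $x_n=E^{(n)}(0)/n!$, first proving $\overline{\rm span}\,\{E(\lambda);\ \lambda\in\Omega\}=\overline{\rm span}\,\{E^{(n)}(0);\ n\ge 0\}$ via Hahn--Banach and the identity principle, and then obtaining $T^{n+1}E^{(n)}(0)=0$ by differentiating $TE(\lambda)=\lambda E(\lambda)$; this is exactly your coefficient computation and your duality argument, just arranged in a slightly different order.
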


\begin{proof} Note first that since $\Omega$ is connected, we have \[ \overline{\rm span}\,\bigl\{ E(\lambda);\; \lambda\in \Omega\bigr\}=\overline{\rm span}\,\bigl\{ E^{(n)}(0);\; n\geq 0\bigr\}.\]

\noindent Indeed, the inclusion $\supseteq$ is clear; and the other inclusion follows from the Hahn-Banach Theorem and the identity principle for holomorphic functions. Next, differentiating the identity $TE(\lambda)\equiv \lambda E(\lambda)$, we get that $TE^{(n)}(\lambda)=\lambda E^{(n)}(\lambda)+n E^{(n-1)}(\lambda)$ for every \(\lambda \in\Omega \) and for all $n\geq 1$. In particular, $TE^{(n)}(0)= n E^{(n-1)}(0)$ for all $n\geq 1$. Since $T E(0)=0$, it follows that $T^{n+1}E^{(n)}(0)=0$ for all $n\geq 0$. So we see that ${\rm span}\,\bigl\{ E^{(n)}(0);\; n\geq 0\bigr\}\subseteq \bigcup_{k\in\N} \ker(T^k)$, which concludes the proof of the lemma.
\end{proof}

As an immediate corollary of Lemma \ref{GS}, we obtain

\begin{corollary}\label{Autre corollaire} If an operator $T\in\bx$ admits a spanning family of holomorphic eigenvector fields $(E_i)_{i\in I}$ defined on some connected open set $\Omega\subseteq\C$ containing $0$, \mbox{\it i.e.} $\overline{\rm span}\,\{ E_i(\lambda);\; i\in I, \lambda\in\Omega\}=X$, then $\bigcup_{k\in\N} \ker(T^k)$ is dense in $X$.
\end{corollary}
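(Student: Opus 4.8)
The plan is to deduce this statement immediately from Lemma~\ref{GS}, with essentially no extra work. The one point to isolate at the outset is the following elementary observation: the set $\mathcal K:=\overline{\bigcup_{k\in\N}\ker(T^k)}$ is a closed linear subspace of $X$. Indeed, since $\ker(T^k)\subseteq\ker(T^{k+1})$ for every $k$, the family $\bigl(\ker(T^k)\bigr)_{k\in\N}$ is an increasing chain of subspaces, so its union is already a linear subspace of $X$, and hence $\mathcal K$, being its closure, is a closed subspace.

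Next I would apply Lemma~\ref{GS} to each eigenvector field $E_i$, $i\in I$. Since $\Omega$ is connected and contains $0$, the lemma applies and yields $\overline{\rm span}\,\{E_i(\lambda);\;\lambda\in\Omega\}\subseteq\mathcal K$ for every $i\in I$. Because $\mathcal K$ is a closed subspace of $X$, it must then also contain the closed linear span of the union of all these sets; that is,
$\overline{\rm span}\,\{E_i(\lambda);\;i\in I,\;\lambda\in\Omega\}\subseteq\mathcal K$.

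Finally, the spanning hypothesis on the family $(E_i)_{i\in I}$ asserts precisely that $\overline{\rm span}\,\{E_i(\lambda);\;i\in I,\;\lambda\in\Omega\}=X$, whence $X\subseteq\mathcal K$, i.e.\ $\bigcup_{k\in\N}\ker(T^k)$ is dense in $X$, as claimed. There is no genuine obstacle here; the only step deserving a moment's attention is the verification that $\bigcup_{k\in\N}\ker(T^k)$ is a subspace, which is exactly what allows one to pass from the individual inclusions provided by Lemma~\ref{GS} to the inclusion for the closed span of the whole union.
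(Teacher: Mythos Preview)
Your proof is correct and is exactly the argument the paper has in mind: the authors simply state that the corollary is ``an immediate corollary of Lemma~\ref{GS}'' without spelling out the details, and what you have written is precisely the natural way to fill them in. The only extra content you supply---that $\bigcup_{k\in\N}\ker(T^k)$ is a linear subspace because the kernels form an increasing chain, so its closure is a closed subspace absorbing the closed span of all the $E_i(\lambda)$---is the obvious one-line justification implicit in the word ``immediate''.
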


\smallskip
Our second lemma is specific to the Hilbertian setting.

\begin{lemma}\label{nilpadh} Let $X=\ell_2$. If $T\in\bbx$ is such that $\bigcup_{k\in\N} \ker(T^k)$ is dense in $X$, then $T$ belongs to the \emph{\sote}-\,closure of \(\mathcal{N}_{1}(X)\).
\end{lemma}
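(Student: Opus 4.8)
\textbf{Proof plan for Lemma \ref{nilpadh}.}

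The plan is to approximate $T$ in the \sote-topology by nilpotent contractions built by compressing $T$ to carefully chosen finite-dimensional subspaces and then zeroing out the residual part. Since $\bigcup_{k\in\N}\ker(T^k)$ is dense in $X=\ell_2$, for any finite set of vectors $x_1,\dots,x_m$ and any $\varepsilon>0$ I can find vectors $y_1,\dots,y_m$ lying in some $\ker(T^{k_0})$ (a single $k_0$ works, by taking the max) with $\Vert x_i-y_i\Vert<\varepsilon$; simultaneously I want to control the adjoint action, so I also fix finitely many functionals $x_1^*,\dots,x_n^*\in X^*=\ell_2$. First I would let $F$ be a finite-dimensional subspace containing $x_1,\dots,x_m$ and $y_1,\dots,y_m$, and enlarge it to a $T$-quasi-invariant subspace: since each $y_i\in\ker(T^{k_0})$, the cyclic subspaces $[y_i, Ty_i,\dots,T^{k_0-1}y_i]$ are finite-dimensional and $T$-invariant, and $T$ restricted to their span $G$ is nilpotent. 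So the natural candidate is $T_0:=P_G T P_G$ where $P_G$ is the orthogonal projection onto a suitable finite-dimensional space $G\supseteq F$ that is \emph{exactly} $T$-invariant and on which $T$ acts nilpotently — then $T_0$ is automatically nilpotent and a contraction.

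The key point is to choose $G$ so that $P_GTP_G$ is simultaneously (a) nilpotent, (b) \sote-close to $T$ on the prescribed finite data. For (a): I take $G$ to be the span of the finite orbits $\{T^j y_i : 0\le j< k_0,\ 1\le i\le m\}$ together with, if needed, finitely many more vectors from $\bigcup_k\ker(T^k)$ to approximate the $x_i$ — but crucially $G$ must be exactly $T$-invariant, which it is since it is spanned by finite $T$-orbits of kernel vectors, and $T_{|G}$ is nilpotent of order at most $k_0$. Because $G$ is $T$-invariant, $P_GTP_G$ agrees with $T$ on $G$, hence $P_GTP_G y_i = Ty_i$ and inductively $(P_GTP_G)^j y_i = T^j y_i$; so $(P_GTP_G) x_i$ is within $O(\varepsilon)$ of $Tx_i$. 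For the adjoint estimate: $\Vert (P_GTP_G)^* x^* - T^* x^*\Vert = \Vert (P_GT^*P_G - T^*)x^*\Vert$, and I need $P_G x^*$ close to $x^*$ and $T^*$ to roughly preserve $G$; this is where I would instead enlarge $G$ to also be (approximately) $T^*$-invariant, or symmetrize by throwing in finite $T$-orbits of kernel-vectors of $T^*$ chosen to approximate the $x_i^*$. The cleanest route: approximate both the $x_i$ and the $x_j^*$ by vectors in $\bigcup_k\ker(T^k)$ — the latter is legitimate since in Hilbert space $\bigcup_k\ker(T^k)$ dense does \emph{not} immediately give $\bigcup_k\ker((T^*)^k)$ dense, so I should be careful here and may need to add a separate perturbation step.

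The main obstacle I anticipate is exactly this asymmetry: density of $\bigcup_k\ker(T^k)$ gives good control of $T$ acting forward on finitely many vectors, but the \sote-topology also demands control of $T^*$ on finitely many functionals, and there is no reason $\bigcup_k\ker((T^*)^k)$ is dense. The way around it is the standard ``cut-down'' trick: enlarge the finite-dimensional $T$-invariant nilpotent space $G$ to a larger finite-dimensional space $\widehat G\supseteq G$ that is merely large enough that $P_{\widehat G}$ is close to the identity on $T(G)$, on $T^*(G)$, and on $x_1^*,\dots,x_n^*$; then set $T_0 := N P_{\widehat G}$ where $N$ is nilpotent on $\widehat G$ extending $T_{|G}$ — and verify that this is a contraction (it is, as composition of a projection with a nilpotent part of norm $\le 1$, after rescaling if necessary) and nilpotent, while $T_0$ and $T_0^*$ match $T$ and $T^*$ on the prescribed data up to $\varepsilon$. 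Modulo bookkeeping this gives the \sote-density. I would present the argument with $G$ spanned by finite $T$-orbits of a finite set of kernel vectors approximating a dense sequence, and handle the dual direction by the projection-cut-down, flagging that the contraction property is preserved because $P_{\widehat G}$ is an orthogonal (norm-one) projection — this is the one place the Hilbertian hypothesis $X=\ell_2$ is genuinely used.
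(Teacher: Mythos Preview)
Your core idea — project onto a $T$-invariant subspace on which $T$ is nilpotent, using that orthogonal projections have norm $1$ — is exactly right, and you correctly identify that this is where $X=\ell_2$ is used. But you are making the argument much harder than it needs to be by insisting on \emph{finite-dimensional} subspaces, and this is what creates the adjoint difficulty you struggle with. The paper simply takes $Q_k$ to be the orthogonal projection onto the (closed, generally infinite-dimensional) subspace $\ker(T^k)$ itself and sets $T_k:=TQ_k$. Since $\ker(T^k)$ is $T$-invariant, $Q_kTQ_k=TQ_k$, so $T_k^n=T^nQ_k$ and $T_k^k=0$; and $\Vert T_k\Vert\le 1$. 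Because $\bigl(\ker(T^k)\bigr)_k$ is increasing with dense union, $Q_k\to I$ in \sot; the projections being self-adjoint, this is automatically \sote\ convergence, and hence $T_k=TQ_k\xrightarrow{\sote}T$ (for the adjoint: $T_k^*=Q_kT^*$, so $T_k^*x^*=Q_k(T^*x^*)\to T^*x^*$). The ``asymmetry'' you flag as the main obstacle simply does not arise.

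Your finite-dimensional variant can be repaired, but not quite as you wrote it. With $G$ finite-dimensional, $T$-invariant, and $T_{|G}$ nilpotent, set $T_0:=TP_G$; then $T_0^*=P_GT^*$, so what you must arrange is $\mathrm{dist}(T^*x_j^*,G)<\varepsilon$, i.e.\ you should approximate $T^*x_j^*$ (not $x_j^*$) by vectors in $\bigcup_k\ker(T^k)$ and throw their finite $T$-orbits into $G$. Your alternative workaround with a larger $\widehat G$ and a nilpotent $N$ ``extending $T_{|G}$, after rescaling if necessary'' is a genuine gap: there is no reason such an $N$ has norm $\le 1$, and rescaling would destroy the approximation to $T$.
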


\begin{proof} For each $k\in\N$, let us denote by $Q_k\in\bx$ the orthogonal projection of \(X\) onto $\ker(T^k)$. Then $\Vert Q_k\Vert\leq 1$ (we use specifically here the fact that \(X=\ell_{2}\)), and since the sequence of subspaces $\bigl( \ker(T^k)\bigr)_{k\in\N}$ is increasing with $\overline{\bigcup_{k\in\N}\ker(T^k)}=X$, we see that $Q_k\xrightarrow{\sot} I$. Moreover, the projections $Q_k$ are self-adjoint, so in fact $Q_k\xrightarrow{\sote} I$. Hence, if we set $T_k:= TQ_k$, then $T_k\in\bbx$ for all $k\in\N$ and $T_k\xrightarrow{\sote}T$. Moreover, each operator $T_k$ is nilpotent, in fact $T_k^{k}=0$: indeed, since $T({\rm Ran}(Q_k))=T(\ker(T^k))\subseteq \ker(T^k)={\rm Ran}(Q_k)$, we have $Q_kTQ_k=TQ_k$, so $T_k^n=T^nQ_k$ for all $n\in\N$ and hence $T_k^k=0$ since ${\rm Ran}(Q_k)=\ker(T^k)$. So we have found a sequence $(T_k)\subseteq\mathcal N_1(X)$ such that $T_k\xrightarrow{\sote}T$. 
\end{proof}

Proposition \ref{nilpot} can now be readily deduced from Lemmas \ref{GS} and \ref{nilpadh}.
\smallskip

\begin{proof}[\it Proof of Proposition \ref{nilpot}] By Lemmas \ref{GS} and \ref{nilpadh}, it is enough to show that the set of all $T\in\bbx$ admitting a spanning family of holomorphic eigenvector fields defined on $\D$ is $\sote$-$\,$dense in $\bbx$. But this follows from the proof of \cite[Corollary 2.12]{GMM1}. Indeed, this proof shows that for any $M>1$, the set of all $T\in\mathcal B_M(X)$ admitting a spanning family of holomorphic eigenvector fields defined on the open disk $D(0,M)$ is \sote -$\,$dense in $\mathcal B_M(X)$; so we get the required result  by homogeneity.
\epf

\smallskip
\begin{remark} Here is a completely different and highly non-elementary proof of Pro\-position \ref{nilpot}. We prove in fact the following stronger result: \emph{A typical $T\in\bbx$ belongs to the {norm-closure} of $\mathcal N_1(X)$}.

\smallskip By a deep result of Apostol-Foias-Voiculescu \cite{AFV}, an operator $T\in\bx$ belongs to the norm-closure of nilpotent operators if and only if it has the following properties: 

\begin{itemize}
\item[]
\be
\item[\rm (i)] $\sigma(T)$ and $\sigma_e(T)$ are connected with $0\in\sigma_e(T)$.
\item[\rm (ii)]  ${\rm ind}(T-\lambda)=0$ for any 
$\lambda\in\C$ such that $T-\lambda$ is a semi-Fredholm operator.
\ee
\end{itemize}

\noindent
Now, by Corollary \ref{noeigen}, a typical $T\in\bbx$ certainly satisfies (i), and a typical $T\in\bbx$ also satisfies (ii) vacuously for $\lambda\in\overline{\,\D}$. Since $T-\lambda$ is invertible if $\vert \lambda\vert>1$, the result follows.
\end{remark}


\subsection{Power-regular operators} According to \cite{Atz}, an operator $T\in\bx$ is said to be \emph{power-regular} if $\Vert T^nx\Vert^{1/n}\to r_x(T)$ as $n\to\infty$ for every $x\in X$. It is shown in \cite{Atz} that any decomposable operator is power-regular. Combining Theorem \ref{Th1} and Corollary \ref{Ginverse}, we obtain the following result.

\begin{proposition}\label{powerreg} Let $X=\ell_2$. A typical $T\in\bbx$ is not power-regular. More precisely, a typical $T\in\bbx$ is such that $r_x(T)=1$ for all $x\neq 0$ yet $\underline{\lim}\, \Vert T^nx\Vert^{1/n}= 0$ for a comeager set of vectors $x\in X$.
\end{proposition}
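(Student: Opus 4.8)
The plan is to combine Theorem~\ref{Th1} with Corollary~\ref{Ginverse}, the latter applied to a well-chosen $G_\delta$ subset $\mathbf G$ of $[0,\infty)^\N$. The first half of the statement is essentially free: by Theorem~\ref{Th1}, a typical $T\in\bbx$ satisfies $\sigma_x(T)=\overline{\,\D}$ for every $x\neq 0$, and such a $T$ has in particular SVEP (since $\sigma_x(T)\neq\emptyset$ for all $x\neq 0$), so the local spectral radius formula gives $r_x(T)=\sup\{|\lambda|\,;\,\lambda\in\sigma_x(T)\}=1$ for every $x\neq 0$. (Alternatively one can invoke Remark~\ref{rxt}.) So a typical $T\in\bbx$ already has $r_x(T)=1$ for all $x\neq 0$.

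For the second half, I would set
\[
\mathbf G:=\Bigl\{\,\omega\in[0,\infty)^\N\;;\;\underline{\lim}_{n\to\infty}\,\omega_n^{1/n}=0\,\Bigr\}.
\]
For a non-negative sequence $(a_n)$, one has $\underline{\lim}_n a_n=0$ if and only if, for every $k\geq 1$, there are infinitely many $n$ with $a_n<1/k$; applying this with $a_n=\omega_n^{1/n}$ gives
\[
\mathbf G=\bigcap_{k\geq 1}\ \bigcap_{N\geq 1}\ \bigcup_{n\geq N}\ \bigl\{\,\omega\in[0,\infty)^\N\;;\;\omega_n<k^{-n}\,\bigr\},
\]
and since each set $\{\omega\,;\,\omega_n<k^{-n}\}$ is open in the product topology, this exhibits $\mathbf G$ as a $G_\delta$ set. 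Moreover $\mathbf G$ obviously contains every sequence that is eventually $0$. Hence Corollary~\ref{Ginverse} applies and yields that a typical $T\in\bbx$ has the property that, for a comeager set of vectors $x\in X$, the sequence $\bigl(\Vert T^nx\Vert\bigr)_{n}$ lies in $\mathbf G$, i.e.\ $\underline{\lim}_n\Vert T^nx\Vert^{1/n}=0$.

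It then remains to intersect the two comeager sets of operators thus obtained: a typical $T\in\bbx$ satisfies simultaneously $r_x(T)=1$ for all $x\neq 0$ and $\underline{\lim}_n\Vert T^nx\Vert^{1/n}=0$ for all $x$ in some comeager subset $C$ of $X$. Since $C$ is in particular dense, it contains some $x\neq 0$; for that $x$ we have $\limsup_n\Vert T^nx\Vert^{1/n}=r_x(T)=1$ whereas $\liminf_n\Vert T^nx\Vert^{1/n}=0$, so $\Vert T^nx\Vert^{1/n}$ does not converge, and in particular does not converge to $r_x(T)$; thus $T$ is not power-regular. The argument is really just bookkeeping on top of Theorem~\ref{Th1} and Corollary~\ref{Ginverse} (and, behind the latter, Proposition~\ref{nilpot}, which is precisely why the statement is restricted to $X=\ell_2$); the only point deserving a word of care is the observation that a comeager — hence dense — subset of $X$ necessarily contains nonzero vectors, so that the two conclusions genuinely coexist on a nonempty set and do obstruct power-regularity.
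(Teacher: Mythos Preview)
Your proof is correct and follows essentially the same approach as the paper: invoke Theorem~\ref{Th1} (or Remark~\ref{rxt}) for $r_x(T)=1$, and apply Corollary~\ref{Ginverse} to the very same set $\mathbf G=\{\omega\in[0,\infty)^\N;\;\underline{\lim}\,\omega_n^{1/n}=0\}$. You supply more detail than the paper does (the explicit $G_\delta$ decomposition of $\mathbf G$ and the final bookkeeping), but the argument is the same.
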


\begin{proof} By Theorem \ref{Th1} (or Remark \ref{rxt}), a typical $T\in\bbx$ is such that $r_x(T)=1$ for every $x\neq 0$. On the other hand,  a typical $T\in\bbx$ is such that $\underline{\lim}\, \Vert T^nx\Vert^{1/n}=0$ for a comeager set of vectors $x\in X$: this follows from Corollary \ref{Ginverse} applied to the set \[ \mathbf G:=\bigl\{ \omega=(\omega_n)\in[0,\infty)^\N;\; \underline{\lim}\, \omega_n^{1/n}=0\bigr\}.\]
\end{proof}

\subsection{Distributionally chaotic operators} 
Let us recall that a vector $x\in X$ is said to be \emph{distributionally irregular} for an operator $T\in\bx$ if there exist two sets $A,B\subseteq \N$ both having upper density equal to $1$, such that $\Vert T^nx\Vert\to 0$ as $n\to \infty$ along $A$ and $\Vert T^nx\Vert\to\infty$ as $n\to\infty$ along $B$. The operator $T$ is said to be \emph{densely distributionally chaotic} if it has a dense set of distributionally irregular vectors. We refer to \cite{BBMP} for more on these notions.

\medskip Note that for $T$ to have any distributionally irregular vector, it is necessary that $\Vert T\Vert >1$. It is shown in \cite{GMM1} that for any $M>1$, a typical $T\in\mathcal B_M(\ell_2)$ is densely distributionally chaotic. This result can be slightly improved, as follows:

\begin{proposition}\label{Prop 4.14} Let $X=\ell_2$. For any $M>1$, a typical $T\in\bmx$ has the following properties: for every $x\neq 0$, there is a set $A\subseteq\N$ with $\overline{\rm dens}(A)=1$ such that $\Vert T^nx \Vert\to \infty$ as $n\to\infty$ along $A$; and for a comeager set of vectors $x\in X$, there is a set $B\subseteq\N$ with $\overline{\rm dens}(B)=1$ such that $\Vert T^n x\Vert\to 0$ as $n\to\infty$ along $B$.
\end{proposition}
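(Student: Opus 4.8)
The plan is to prove the two assertions separately, in each case producing a comeager subset of $(\bmx,\sote)$ on which the stated property holds, and then to intersect. Both assertions concern the asymptotic size of the sequences $(\Vert T^nx\Vert)_n$, and the common elementary ingredient is the observation that, for $\omega\in[0,\infty)^\N$, there is a set $A\subseteq\N$ with $\overline{\rm dens}(A)=1$ along which $\omega_n\to\infty$ (resp. $\omega_n\to 0$) if and only if $\overline{\rm dens}\{n\,;\,\omega_n>k\}=1$ for every $k\in\N$ (resp. $\overline{\rm dens}\{n\,;\,\omega_n<1/k\}=1$ for every $k$); the non-trivial implication is a routine diagonal construction of $A$ from the sets on the right. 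Consequently the sets $\mathbf{G}_\infty:=\{\omega\in[0,\infty)^\N\,;\,\exists A,\ \overline{\rm dens}(A)=1,\ \omega_n\to\infty\hbox{ along }A\}=\bigcap_{k\ge1}\{\omega\,;\,\overline{\rm dens}\{n\,;\,\omega_n>k\}=1\}$ and, similarly, $\mathbf{G}_0:=\{\omega\in[0,\infty)^\N\,;\,\exists A,\ \overline{\rm dens}(A)=1,\ \omega_n\to 0\hbox{ along }A\}$ are $G_\delta$ in $[0,\infty)^\N$, since $\{\omega\,;\,\omega_n>k\}$ and $\{\omega\,;\,\omega_n<1/k\}$ are open and ``$\overline{\rm dens}(S)=1$'' unwinds to $\forall j\,\forall N_0\,\exists N\ge N_0:|S\cap[1,N]|>(1-1/j)N$. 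Moreover $\mathbf{G}_\infty$ is upward closed and invariant under multiplication by positive scalars, and $\mathbf{G}_0$ contains every eventually-zero sequence.

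The second assertion then follows at once from the $\bmx$-version of Corollary~\ref{Ginverse}, applied with $\mathbf{G}:=\mathbf{G}_0$. This version holds by the very same proof: its only inputs are that the nilpotent operators form a \sote-dense subset of $\bmx$ (which follows from Proposition~\ref{nilpot} by homogeneity) and that, for a countable dense set $D\subseteq X$, the set $\{T\in\bmx\,;\,\forall z\in D:(\Vert T^nz\Vert)_n\in\mathbf{G}_0\}$ is $G_\delta$ (which uses only that $\mathbf{G}_0$ is $G_\delta$). Hence a typical $T\in\bmx$ is such that $(\Vert T^nx\Vert)_n\in\mathbf{G}_0$ for a comeager set of $x$, i.e. for such $x$ there is a set $B$ of upper density $1$ along which $\Vert T^nx\Vert\to 0$.

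For the first assertion I would show that $\mathcal{A}:=\{T\in\bmx\,;\,\forall x\neq 0:(\Vert T^nx\Vert)_n\in\mathbf{G}_\infty\}$ is comeager. That $\mathcal{A}$ is $G_\delta$ is obtained by rerunning the $G_\delta$ part of the proof of Proposition~\ref{trueProp2} with $\mathbf{G}_\infty$ in place of $\mathbf{G}$; that part uses only that the set is $G_\delta$, upward closed and scale-invariant (the hypothesis that $\mathbf{G}$ contains all sequences bounded away from $0$ is used there solely for denseness, which I treat separately). The sole change is that, having reduced by homogeneity to vectors with $\Vert x\Vert\le 1$ so that $\Vert T^nx\Vert\le M^n$, one works in the compact cube $\prod_n[0,M^n]$ rather than in $[0,1]^\N$; the analogue of Lemma~\ref{faux?} is then immediate (it uses only compactness of $\prod_n[0,M^n]$), and the lower semicontinuity of $(T,x)\mapsto\Vert T^nx\Vert$ on $(\bmx,\sote)\times(B_X,w)$ together with the fact that $B_X\setminus\{0\}$ is $K_\sigma$ in $(X,w)$ carry over verbatim. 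To see that $\mathcal{A}$ is \sote-dense I would argue as for Theorem~\ref{Th1}: since the finite-rank operators of norm $<M$ are \sote-dense in $\bmx$, it suffices to approximate each $A\in\mathcal{B}(E_N)$ with $\Vert A\Vert<M$ (where $E_N:=[e_0,\dots,e_N]$) by members of $\mathcal{A}$. Fixing $c\in(1,M)$ and, for large $M'>N$ and small $\delta>0$, letting
\[ Te_j:=\left\{\begin{array}{ll} Ae_j+\delta\,e_{j+M'+1}&\hbox{if }0\le j\le M',\\ c\,e_{j+M'+1}&\hbox{if }j>M',\end{array}\right.\]
one checks, using the mutual orthogonality in $\ell_2$ of $[e_0,\dots,e_N]$, $[e_{M'+1},\dots,e_{2M'+1}]$ and $[e_j\,;\,j\ge 2M'+2]$, that $\Vert T\Vert\le M$ once $\delta$ is small; exactly as in Lemma~\ref{vrai?}, $T$ agrees with $A$ on $E_N$ up to $O(\delta)$ and $T^*$ with $A^*$ on $E_N^*$, so $T$ lies in any prescribed \sote-neighbourhood of $A$ when $M'$ is large and $\delta$ small; and for any $x\neq 0$ one has $\Vert T^nx\Vert\ge\mathrm{const}\cdot c^n$ for all large $n$ (by examining a nonzero coordinate of $x$, or of $Tx$, of index larger than $M'$), so $(\Vert T^nx\Vert)_n\in\mathbf{G}_\infty$ and $T\in\mathcal{A}$.

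Combining the two assertions recovers and slightly strengthens the dense distributional chaos of \cite{GMM1}: for a comeager, hence dense, set of vectors $x\neq 0$ the sequence $(\Vert T^nx\Vert)_n$ tends to $0$ along a set of upper density $1$ and to $\infty$ along another such set, so $x$ is distributionally irregular. I do not expect a genuine obstacle here: the construction of the last paragraph is a direct variant of those already used for Theorem~\ref{Th1} and Lemma~\ref{vrai?}, and the second assertion is a black-box application of Corollary~\ref{Ginverse}. The only point that is really new is the elementary density characterisation opening the proof; the most tedious (but entirely routine) part is the transcription of the $G_\delta$-arguments of Proposition~\ref{trueProp2} and Lemma~\ref{faux?} from the cube $[0,1]^\N$ to $\prod_n[0,M^n]$.
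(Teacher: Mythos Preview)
Your proof is correct, but the paper takes a shorter route that avoids both reworking the $G_\delta$ machinery in the cube $\prod_n[0,M^n]$ and constructing a new dense family of operators with weight $c>1$. The trick for the first assertion is to pick $\alpha\in(0,1)$ with $M\alpha>1$ and apply Proposition~\ref{trueProp2} \emph{in $\bbx$} to the set $\mathbf{G}:=\{\omega\in(0,\infty)^\N\,;\,\overline{\rm dens}\{n:\omega_n>\alpha^n\}=1\}$: since $\alpha<1$, this $\mathbf{G}$ \emph{does} contain every sequence bounded away from $0$, so the proposition applies as stated and gives that a typical $T\in\bbx$ has $\overline{\rm dens}\{n:\|T^nx\|>\alpha^n\}=1$ for all $x\neq 0$. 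Rescaling $T\mapsto MT$ then produces, for a typical $T\in\bmx$, the set $A=\{n:\|T^nx\|>(M\alpha)^n\}$ of upper density $1$ along which $\|T^nx\|\to\infty$. The second assertion is handled symmetrically with $\beta\in(0,1/M)$ and $\mathbf{G}=\{\omega:\overline{\rm dens}\{n:\omega_n<\beta^n\}=1\}$ plugged into Corollary~\ref{Ginverse} (again in $\bbx$), followed by rescaling. Your approach has the virtue of working directly with the ``natural'' target sets $\mathbf{G}_\infty$ and $\mathbf{G}_0$, and your new dense family is a pleasant explicit construction; the paper's approach buys brevity by reducing everything to the already-proved $\bbx$ results via the homogeneity $\bbx\leftrightarrow\bmx$, at the cost of introducing the somewhat artificial thresholds $\alpha^n$ and $\beta^n$.
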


\begin{proof} Let us first show that a typical $T\in\bmx$ is such that for every $x\neq 0$, there exists a set $A\subseteq\N$ with $\overline{\rm dens}(A)=1$ such that $\Vert T^nx\Vert\to \infty$ as $n\to\infty$ along $A$. Choose $\alpha\in (0,1)$ such that $M\alpha >1$, and set
\[ \mathbf G:=\Bigl\{ \omega=(\omega_n)\in (0,\infty)^\N;\; \overline{\rm dens}\,\bigl(\{ n\in\N;\; \omega_n>\alpha^n\}\bigr)=1\Bigr\}.\]

\noindent The set $\mathbf G$ is easily seen to be $\gd$ in $(0,\infty)^\N$. Moreover, $\mathbf G$ is clearly upward closed, and since $\alpha<1$ it contains all sequences $\omega$ such that $\inf_n\omega_n>0$. By Proposition \ref{trueProp2}, it follows that a typical $T\in\bbx$ is such that for every $x\neq 0$, the set $\{ n\in\N;\; \Vert T^nx\Vert >\alpha^n\}$ has upper density equal to $1$. Hence, by homogeneity, a typical $T\in\bmx$ is such that for every $x\neq 0$, the set $A_{x,T}:=\{ n\in\N;\; \Vert T^nx\Vert >(M\alpha)^n\}$ has upper density equal to $1$; and clearly $\Vert T^nx\Vert\to\infty$ as $n\to\infty$ along $A_{x,T}$.

\smallskip Now, let us show that a typical $T\in\bmx$ is such that for a comeager set of vectors $x\in X$, there exists a set $B\subseteq\N$ with $\overline{\rm dens}(B)=1$ such that $\Vert T^nx\Vert\to 0$ as $n\to\infty$ along $B$. This follows in fact from \cite[Proposition 2.40]{GMM1}, but the proof we give here is rather different and more elementary. Choose $\beta >0$ such that $\beta M<1$. Applying Corollary \ref{Ginverse} to the set 
\[ \mathbf G:=\Bigl\{ \omega=(\omega_n)\in[0,\infty)^\N;\;  \overline{\rm dens}\,\bigl(\{ n\in\N;\; \omega_n <\beta^n\}\bigr)=1\Bigr\},\]
we see that a typical $T\in\bbx$ is such that $\overline{\rm dens}\,\bigl(\{ n\in\N;\; \Vert T^nx\Vert <\beta^n\}\bigr)=1$ for a comeager set of vectors $x\in X$. By homogeneity, it follows that a typical $T\in\bmx$ has the following property: for a comeager set of vectors $x\in X$, the set \[B_{T,x}:=\{ n\in\N;\; \Vert T^nx\Vert <(\beta M)^n\}\] has upper density equal to $1$. Since obviously $\Vert T^nx\Vert\to 0$ as $n\to\infty$ along $B_{T,x}$, this concludes the proof of Proposition \ref{Prop 4.14}.
\end{proof}

\begin{remark} A slight modification of the above proof gives the following result. Let $(\alpha_n)$ be any sequence of positive real numbers tending to $0$, and let $(\beta_n)$ be any sequence of positive real numbers. Then, a typical $T\in\bmx$ has the following properties: for every $x\neq 0$, there is a set $A\subseteq\N$ with $\overline{\rm dens}(A)=1$ such that $\Vert T^nx \Vert >\alpha_n M^n$ for all $n\in A$; and for a comeager set of vectors $x\in X$, there is a set $B\subseteq\N$ with $\overline{\rm dens}(B)=1$ such that $\Vert T^n x\Vert < \beta_n$ for all $n\in B$.
\end{remark}

\begin{remark} It is \emph{not} true that given \(M>1\), a typical $T\in\bmx$ is such that every $x\neq 0$ is a distributionally irregular vector for $T$. Indeed, by \cite[Corollary V.37.9]{Mu}, for any $T\in\bx$, there is a dense set of vectors $x\in X$ such that $\Vert T^n x\Vert^{1/n}\to r(T)$ as $n\to\infty$. Since a typical $T\in \bmx$ is such that its spectral radius is equal to \(M\), it follows that for a typical $T\in \bmx$, there is a dense set of vectors $x\in X$ such that $\Vert T^nx\Vert\to\infty$, and hence a dense set of vectors $x$ which are not distributionally irregular for $T$.
\end{remark}

\section{Generalized kernels and Gaussian mixing}\label{Section 5} As we have seen, sets of the form \(\ker^*(T):=\bigcup_{k\in\N}\ker (T^{k})\), where $T$ is a bounded operator on a complex separable Banach space $X$, play a prominent role in the proof of Proposition \ref{nilpot}. The set $\ker^*(T)$ is usually called the \emph{generalized kernel} of the operator $T$. Operators with a $1\,$-$\,$dimensional kernel and a dense generalized kernel are called \emph{generalized backward shifts} in \cite{GS}, where it is shown that operators commuting with generalized backward shifts have remarkable dynamical properties. We would like to point out the following kind of converse to Corollary \ref{Autre corollaire}, which is very much in the spirit of \cite[Theorem 3.6]{GS}. The proof is essentially the same as that of \cite[Theorem 2]{F}, but we give the details for convenience of the reader.

\begin{proposition}\label{GS2} Let $X$ be a complex separable Banach space, and let $T\in\bx$. Assume that $T$ is \emph{onto} and that there exists an operator $B$ with dense generalized kernel such that
$TB=BT$ and $\ker(B)\subseteq\ker(T)$.  
Then, $T$ admits a spanning family of holomorphic eigenvector fields defined on the disk $D(0,c_T)$, where $c_T:=\inf\,\{ \Vert T^*x^*\Vert;\; \Vert x^*\Vert =1\}$. 
\end{proposition}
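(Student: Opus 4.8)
The plan is to adapt Finch's argument for \cite[Theorem~2]{F}: using only that $\ker^*(T)$ is dense and that $T$ is onto, I would attach to each vector of $\ker^*(T)$ a holomorphic $T\,$-$\,$eigenvector field on $D(0,c_T)$ having that vector in the closed linear span of its values, and then observe that the resulting family is spanning.

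\emph{Preliminaries.} Since $T$ is onto, $c_T=\inf\{\|T^*x^*\|;\ \|x^*\|=1\}>0$ and, for every $r<c_T$ and every $u\in X$, there is $v\in X$ with $Tv=u$ and $\|v\|\le\|u\|/r$ (that is, $c_T$ is the surjectivity modulus of $T$; this is standard, see e.g.\ \cite{Mu}). Also, $\ker(B)\ne\{0\}$, for otherwise $B$ --- hence every $B^k$ --- would be injective and $\ker^*(B)=\{0\}$ would not be dense. Finally, an immediate induction on $k$ using $TB=BT$ and $\ker(B)\subseteq\ker(T)$ gives $\ker(B^k)\subseteq\ker(T^k)$ for every $k$: if $B^ky=0$ then $By\in\ker(B^{k-1})\subseteq\ker(T^{k-1})$, so $B(T^{k-1}y)=T^{k-1}(By)=0$, whence $T^{k-1}y\in\ker(B)\subseteq\ker(T)$ and $T^ky=0$. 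Thus $\ker^*(B)\subseteq\ker^*(T)$, and $\ker^*(T)$ is dense in $X$.

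\emph{Construction of the fields.} Fix $y\in\ker^*(T)\setminus\{0\}$ and let $k\ge1$ be minimal with $T^ky=0$. Set $e_j:=T^{k-1-j}y$ for $0\le j\le k-1$, so that $e_{k-1}=y$, $Te_0=T^ky=0$ and $Te_j=e_{j-1}$ for $1\le j\le k-1$. Using surjectivity of $T$, choose inductively, for $n\ge k$, a vector $e_n$ with $Te_n=e_{n-1}$ and $\|e_n\|\le\bigl(\tfrac1{c_T}+\tfrac1n\bigr)\|e_{n-1}\|$; this is possible because $\tfrac1{c_T}+\tfrac1n>\tfrac1{c_T}$. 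Then $Te_0=0$ and $Te_n=e_{n-1}$ for all $n\ge1$, and since $\|e_n\|\le\|y\|\prod_{j=k}^{n}\bigl(\tfrac1{c_T}+\tfrac1j\bigr)$ with $\bigl(\prod_{j=k}^{n}(\tfrac1{c_T}+\tfrac1j)\bigr)^{1/n}\to 1/c_T$, we get $\limsup_n\|e_n\|^{1/n}\le1/c_T$. Hence $E_y(z):=\sum_{n\ge0}z^ne_n$ converges on $D(0,c_T)$ and defines there a holomorphic function with $(T-z)E_y(z)=\sum_{n\ge1}z^ne_{n-1}-\sum_{n\ge0}z^{n+1}e_n=0$; thus $E_y$ is a holomorphic $T\,$-$\,$eigenvector field on $D(0,c_T)$. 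Moreover, if $x^*\in X^*$ annihilates $E_y(\lambda)$ for all $\lambda\in D(0,c_T)$, the holomorphic scalar function $\lambda\mapsto\langle x^*,E_y(\lambda)\rangle$ vanishes identically, so $\langle x^*,e_n\rangle=0$ for all $n$ and in particular $\langle x^*,y\rangle=\langle x^*,e_{k-1}\rangle=0$; by Hahn-Banach, $y\in\overline{\rm span}\{E_y(\lambda);\ \lambda\in D(0,c_T)\}$.

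\emph{Conclusion.} The family $(E_y)_{y\in\ker^*(T)\setminus\{0\}}$ --- which one may replace by a countable subfamily indexed by a countable dense subset of $\ker^*(T)$ --- consists of holomorphic $T\,$-$\,$eigenvector fields defined on $D(0,c_T)$, and the closed linear span of all their values contains $\ker^*(T)$, hence equals $X$ by the first step; so this family is spanning, which is the assertion. The one point requiring care is to obtain the fields on the \emph{full} disk $D(0,c_T)$ rather than on each $D(0,r)$, $r<c_T$, separately: this is exactly what the refined norm control $\|e_n\|\le(\tfrac1{c_T}+\tfrac1n)\|e_{n-1}\|$, available through the identification of $c_T$ with the surjectivity modulus of $T$, is designed to provide. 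The remaining steps (the induction on kernels, the power-series manipulation, and the identity-theorem/Hahn-Banach spanning argument) are routine.
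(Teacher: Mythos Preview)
Your proof is correct and follows essentially the same route as the paper: reduce to denseness of $\ker^*(T)$ via the inclusion $\ker(B^k)\subseteq\ker(T^k)$, then for each $y\in\ker^*(T)$ build a controlled backward $T$-orbit extending the finite string $(T^{k-1}y,\dots,y)$, form the associated power series $E_y$, and conclude spanningness by Hahn--Banach. The only cosmetic difference is your choice of norm control $(\tfrac{1}{c_T}+\tfrac{1}{n})$ in place of the paper's $(1+2^{-j})c_T^{-1}$; both yield radius of convergence exactly $c_T$.
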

\bpf  The quantity $c_T$ is positive because $T$ is onto; and we know that for any $x\in X$ and any $c>c_T^{-1}$, one can find $x'\in X$ such that $\Vert x'\Vert\leq c\,\Vert x\Vert$ and $Tx'=x$.

\smallskip Note also that the assumptions imply that $\ker(B^k)\subseteq\ker(T^k)$ for every $k\in\N$. So $T$ itself has a dense generalized kernel.

\smallskip
Let $Z:=\ker^*(T)$. For any $z\in Z$, let us denote by $k_z$ the smallest integer $k\geq 1$ such that $T^kz=0$.  By the definition of $c_T$, one can find a sequence $(z_j)_{j\geq k_z-1}$ of vectors of $Z$ with $z_{k_z-1}=z$ such that $Tz_j=z_{j-1}$ and $\Vert z_j\Vert \leq (1+2^{-j}) c_T^{-1}\,\Vert z_{j-1}\Vert$ for all $j\geq k_z$. Set also $z_j:= T^{k_z-1-j} z$ for $0\leq j <k_z-1$, so that $Tz_j=z_{j-1}$ for every $j>0$. Then, for any $\lambda\in D(0,c_T)$, the series $\sum_{j\geq 0} \lambda^{j}  z_{j}$ is convergent; so the formula 
\[ E_z(\lambda):= \sum_{j\geq 0} \lambda^{j} z_{j}  \]
defines a holomorphic function $E_z:D(0,c_T)\to X$. Note that $Tz_0=0$ since $z_0=T^{k_z-1} z$. 
By the choice of the sequence $(z_j)$,  it  follows that $TE_z(\lambda)=\lambda E_z(\lambda)$ for every $\lambda\in D(0,c_T)$. So we have defined on $D(0,c_T)$ a family of holomorphic eigenvector fields $(E_z)_{z\in Z}$ for $T$. Moreover, this family is spanning. Indeed, if $x^*\in X^*$ is such that $\pss{x^*}{E_z(\lambda)}=0$ for every $z\in Z$ and $\lambda\in D(0, c_T)$, then $\pss{x^*}{z}=\pss{x^*}{z_{k_z-1}}=0$ for every $z\in Z$, and hence $x^*=0$ since $Z$ is dense in $X$. 
\epf

\begin{remark} Combining Lemma \ref{GS} with the proof of Proposition \ref{GS2}, we obtain the following result: if $T\in\bx$ is onto, then there exists a family of holomorphic eigenvector fields $(E_i)_{i\in I}$ defined on $D(0,c_T)$ such that $\overline{\rm span}\,\{ E_i(\lambda);\, i\in I, \lambda\in D(0,c_T)\}=\overline{\ker^*(T)}$.
\end{remark}

\smallskip
We now use Proposition \ref{GS2} to provide a condition under which an operator $T\in\bx$ is \emph{mixing in the Gaussian sense}, which means that $T$ admits an invariant Gaussian probability  measure with full support with respect to which it is a strongly mixing transformation. (We refer to \cite[Chapter 5]{BMbook} and \cite{BG} for unexplained terminology and for more about the ergodic theory of linear dynamical systems.) This result is essentially proved in \cite[Corollary 9]{Mu2}, but our formulation is slightly more general.

\bco\label{Mullererie} Let $X$ be a complex separable Banach space, and let $T\in\bx$. Assume that $T$ has the form $T=f(B)$, where $B\in\bx$ is onto with a dense generalized kernel, and $f$ is a function holomorphic on an open set $\Omega\supseteq \sigma(B)$, not constant on any connected component of $\Omega$, and such that 
$\vert f(z_0)\vert=1$  for some $z_0\in \Omega\cap D(0,c_B)$. Then, the operator 
$T$ is mixing in the Gaussian sense. 
\eco
\bpf Let $V\subseteq \Omega\cap D(0,c_B)$ be a connected open neighborhood of $z_0$. By assumption, the function $f$ is not constant on $V$, so $f(V)\cap\T$ is a non-empty open subset of $\T$. In particular $f(V)\cap\T$ is uncountable, so $V\cap f^{-1}(\T)$ is uncountable. On the other hand, the set $\{ z\in V;\; f'(z)=0\}$ is countable. So $W:=V\cap\{ z\in V;\; f'(z)\neq 0\}$ is a non-empty open set such that $f(W)\cap\T\neq\emptyset$. By the implicit function Theorem, it follows that one can find a non-trivial closed arc $\Lambda\subseteq\T$ and a (one-to-one) Lipschitz map $\phi:\Lambda\to V$ such that $f(\phi(\lambda))=\lambda$ for every $\lambda\in\Lambda$. 

By Proposition \ref{GS2}, the operator $B$ admits a spanning family of holomorphic eigenvector fields $(E_i)_{i\in I}$ defined on $D(0,c_B)$. If we set $\widetilde{E_i}(\lambda):=E_i(\phi(\lambda))$, we obtain a family of Lipschitz eigenvector fields for $T=f(B)$ defined on the arc 
$\Lambda$. Moreover, the family $(\widetilde{E_i})_{i\in I}$ is spanning by the Hahn-Banach Theorem and the identity principle for holomorphic functions, because $\widetilde{E_i}(\Lambda)= E_i(\phi(\Lambda))$ for all $i\in I$ and $\phi(\Lambda)$ is an infinite compact subset of $D(0,c_B)$. By \cite[Theorem 3.4]{BG}, it follows that $T$ is mixing in the Gaussian sense.\epf

\begin{example}\label{petit}  Let $X$ be a complex separable Banach space, and let  $B\in\bx$ be onto with a dense generalized kernel. For any $\lambda_0\in\C$ such that $\dist(\lambda_0, \T)<c_B$  (in particular, for any $\lambda_0\in\T$) the operator $T:= \lambda_0+B$ is mixing in the Gaussian sense.
\end{example}
\bpf Apply Corollary \ref{Mullererie} with $f(z):=\lambda_0+z$.
\epf

\begin{remark} If one only assumes that $\ker^*(B)$ is dense in $X$ and $\ker^*(B)\subseteq {\rm Ran}(B)$, then one can conclude that $\lambda_0+B$ is \emph{topologically} mixing for any $\lambda_0\in\T$ (see \cite[Corollary 2.3]{BMbook}). But these weaker assumptions do not entail mixing in the Gaussian sense. For example, if $B$ is a compact weighted backward shift on $c_0$ or $\ell_p$, then $\lambda_0+B$ is not even frequently hypercyclic by \cite[Theorem 1.2]{Shk}, since its spectrum is countable.
\end{remark}

\section{Some further comments and questions}\label{Section 6}
We collect in this last section some natural questions which arise in connection with this work.
\par\medskip
To begin with, observe that since the space  \(c_{0}:=c_{0}(\Z_{+})\) has separable dual,  the ball \(\mathcal{B}_{1}(c_{0})\) endowed with the topology \sote\ is a Polish space. So it makes sense to ask whether the results from Sections \ref{Section 3} and \ref{Section 4} can be extended to typical contractions of \(c_{0}\).

\begin{question}\label{Q1}
 Are Theorem \ref{Th1} and Proposition \ref{trueProp2} still valid for $X=c_0$?
\end{question}

Observe that the proofs of Theorem \ref{Th1} and Proposition \ref{trueProp2} use in a crucial way the weak compactness of closed balls of \(\lp\), \(1<p<\infty\).
\par\medskip
We are able to prove the \sote-\,density of \(\mathcal{N}_{1}(X)\) in \(\bbx\) only in the case where \(X=\ell_{2}\), and, as a consequence, Corollaries \ref{smallspectrum} and \ref{Autre corollaire} as well as Propositions \ref{powerreg} and \ref{Prop 4.14} are proved only in the Hilbertian setting. It would thus be interesting to be able to answer the following:

\begin{question}\label{Q 2}
 Let \(X=\lp\), \(1<p\neq 2<\infty\). Is it true that \(\mathcal{N}_{1}(X)\) is \sote-\,dense in \(\bbx\)?
\end{question}

Corollary \ref{smallspectrum} and its proof motivate the next question:

\begin{question}\label{Q 3}
 Let \(X=\ell_{2}\) (or even \(X=\lp\), \(1<p<\infty\)). Let \(a\in\T\). Is it true that the set of all \(T\in\bbx\) such that \(\sigma (T)=\{a\}\) is \sote-\,dense in \(X\)?
\end{question}

In relation to this question, it is shown in \cite[Proposition 3.10]{GMM2} that if \(X=\lp\) for some \(1\le p<\infty\), the set of all \(T\in\bbx\) such that \(\sigma (T)\subseteq \T\) is \sot-\,dense in \(\bbx\). The proof given there shows that for \(1<p<\infty\), this set of operators is \sote-\,dense in \(\bbx\) as well.
\par\medskip
The results of this note show that essentially no known criterion of a spectral flavor can be brought to use to show that a typical \(T\in\mathcal{B}_{1}(\lp)\), \(1<p\neq 2<\infty\), has a non-trivial invariant subspace. In this line of thought, let us mention here an open question from \cite{GMM2}. An important criterion for showing the existence of a non-trivial invariant subspace for operators is given by the famous Lomonosov Theorem, which states that if the commutant \(\{T\}'\) of an operator \(T\) on a Banach space \(X\) contains an operator which is not a multiple of the identity operator and which commutes with a non-zero compact operator, then \(T\) has a non-trivial invariant subspace. It is shown in \cite[Theorem 7.5]{GMM2} that a typical $T\in\mathcal B_1(\ell_2)$ does \emph{not} commute with any non-zero compact operator.

\begin{question}\label{Q 4}
 Let \(X=\lp\), \(1<p<\infty\). Is it true that a typical \(T\in\bbx\) does not satisfy the assumptions of the Lomonosov Theorem? Is it at least true that a typical \(T\in\bbx\) does not commute with a non-zero compact operator?
\end{question}

Lastly, we mention that if \(p>2\), Theorem \ref{Th1} and Proposition \ref{trueProp2}  hold true also for a typical \(T\in(\mathcal{B}_{1}(\lp),\sot)\). Indeed, by \cite[Theorem 5.12]{GMM2}, any \sote-\,comeager subset of \(\mathcal{B}_{1}(\lp)\) is also \sot\,-\,comeager.

\begin{question}\label{Q 5}
 Let \(1<p<2\). Do the statements of Theorem \ref{Th1} and Proposition \ref{trueProp2}  hold true also for a typical \(T\in(\mathcal{B}_{1}(\lp),\sot)\)?
\end{question}

Observe that the answer to Question \ref{Q 5} is negative if \(p=1\) or \(p=2\). Indeed, it is proved in \cite{GMM2} (resp. in \cite{EM}) that a typical \(T\in(\mathcal{B}_{1}(\ell_{1}),\sot)\) (resp. a typical \(T\in(\mathcal{B}_{1}(\ell_{2}),\sot)\)) is such that any \(\lambda \in\D\) is an eigenvalue of \(T\), and that \(T^{*}\) is a surjective isometry. So \(T^{*}\) has property \((\beta )\) (see \cite[Proposition 1.6.6]{LN}), and hence \(T\) has property \((\delta )\).

\begin{bibdiv}
  \begin{biblist}
  
 \bib{AE}{article}{
AUTHOR = {E. Albrecht and J. Eschmeier},
     TITLE = {Analytic functional models and local spectral theory},
   JOURNAL = {Proc. London Math. Soc.},
    VOLUME = {75},
      YEAR = {1997},
     PAGES = {323--348},
 }
 
 \bib{AFV}{article}{
 AUTHOR = {{C. Apostol, C. Foia\c{s},} and D. Voiculescu},
     TITLE = {On the norm-closure of nilpotents {II}},
   JOURNAL = {Rev. Roumaine Math. Pures Appl.},
    VOLUME = {19},
      YEAR = {1974},}
 
  \bib{Atz}{article}{
AUTHOR = {A. Atzmon},
     TITLE = {Power regular operators},
   JOURNAL = {Trans. Amer. Math. Soc.},
    VOLUME = {347},
      YEAR = {1995},
     PAGES = {3101--3109},
      }

\bib{AtzSod}{article}{
   author={Atzmon, A.},
   author={Sodin, M.},
   title={Completely indecomposable operators and a uniqueness theorem of
   Cartwright-Levinson type},
   journal={J. Funct. Anal.},
   volume={169},
   date={1999},
   number={1},
   pages={164--188},
}

\bib{BG}{article}{
AUTHOR = {F. Bayart and S. Grivaux},
     TITLE = {Invariant {G}aussian measures for operators on {B}anach spaces
              and linear dynamics},
   JOURNAL = {Proc. Lond. Math. Soc. (3)},
    VOLUME = {94},
      YEAR = {2007},
     PAGES = {181--210},
     }

\bib{BMbook}{book}{
   author={Bayart, Fr\'{e}d\'{e}ric},
   author={Matheron, \'{E}tienne},
   title={Dynamics of linear operators},
   series={Cambridge Tracts in Mathematics},
   volume={179},
   publisher={Cambridge University Press, Cambridge},
   date={2009},
   pages={xiv+337},
   isbn={978-0-521-51496-5},
   review={\MR{2533318}},
   
}     

\bib{BBMP}{article}{
AUTHOR = {{N. Bernardes Jr., A. Bonilla, V. M\"{u}ller} and
            A.  Peris.},
     TITLE = {Distributional chaos for linear operators},
   JOURNAL = {J. Funct. Anal.},
    VOLUME = {265},
      YEAR = {2013},
    NUMBER = {9},
     PAGES = {2143--2163},
}

\bib{Br2}{article}{
   author={Brown, Scott W.},
   title={Hyponormal operators with thick spectra have invariant subspaces},
   journal={Ann. of Math. (2)},
   volume={125},
   date={1987},
   number={1},
   pages={93--103},
}

\bib{BCP}{article}{
    AUTHOR = {Brown, Scott W.},
    author={Chevreau, Bernard},
    author={Pearcy, Carl},
     TITLE = {On the structure of contraction operators. {II}},
   JOURNAL = {J. Funct. Anal.},
    VOLUME = {76},
      YEAR = {1988},
    NUMBER = {1},
     PAGES = {30--55},
}

 \bib{CF}{book}{
AUTHOR = {I. Colojoar\u{a}  and C. Foia\c{s}},
     TITLE = {Theory of generalized spectral operators},
 PUBLISHER = {Gordon and Breach},
      YEAR = {1968},
}

 \bib{D}{article}{
 AUTHOR = {R. G. Douglas},
     TITLE = {On extending commutative semigroups of isometries},
   JOURNAL = {Bull. London Math. Soc.},
    VOLUME = {1},
      YEAR = {1969},
     PAGES = {157--159},
 }
  
\bib{Ei}{article}{
   author={Eisner, Tanja},
   title={A ``typical'' contraction is unitary},
   journal={Enseign. Math. (2)},
   volume={56},
   date={2010},
   number={3-4},
   pages={403--410},
}

  \bib{EM}{article}{

AUTHOR = {T. Eisner and T. M\'{a}trai},
     TITLE = {On typical properties of {H}ilbert space operators},
   JOURNAL = {Israel J. Math.},
    VOLUME = {195},
      YEAR = {2013},
     PAGES = {247--281},
}

\bib{EP1}{article}{
   author={Eschmeier, J\"{o}rg},
   author={Prunaru, Bebe},
   title={Invariant subspaces for operators with Bishop's property $(\beta)$
   and thick spectrum},
   journal={J. Funct. Anal.},
   volume={94},
   date={1990},
   number={1},
   pages={196--222},
}
		
\bib{EP2}{article}{
   author={Eschmeier, J\"{o}rg},
   author={Prunaru, Bebe},
   title={Invariant subspaces and localizable spectrum},
   journal={Integral Equations Operator Theory},
   volume={42},
   date={2002},
   number={4},
   pages={461--471},
}

\bib{F}{article}{AUTHOR = {Finch, J. K.},
     TITLE = {The single valued extension property on a {B}anach space},
   JOURNAL = {Pacific J. Math.},
    VOLUME = {58},
      YEAR = {1975},
     PAGES = {61--69},
}

\bib{GS}{article}{
AUTHOR = {G. Godefroy and J. Shapiro},
     TITLE = {Operators with dense, invariant, cyclic vector manifolds},
   JOURNAL = {J. Funct. Anal.},
    VOLUME = {98},
      YEAR = {1991},
     PAGES = {229--269},

}

\bib{GMM1}{article}{
   author={Grivaux, S.},
   author={Matheron, \'{E}.},
   author={Menet, Q.},
   title={Linear dynamical systems on Hilbert spaces: typical properties and
   explicit examples},
   journal={Mem. Amer. Math. Soc.},
   volume={269},
   date={2021},
   number={1315},
}

\bib{GMM2}{article}{
    author={{S. Grivaux,  Q. Menet} and \'{E}. Matheron},
   title={Does a typical $\ell_p\,$-$\,$space contraction have a non-trivial invariant subspace?},
 journal={to appear in Trans. Amer. Math. Soc., preprint available at https://arxiv.org/abs/2012.02016},
  date={2020},
}

\bib{LN}{book}{
author={K. B. Laursen and M. M. Neumann},
title={An introduction to local spectral theory},
series={London Mathematical Society Monographs},
volume={20},
publisher={Oxford University Press},
year={2000},
}

  \bib{Mostafa}{article}{
   AUTHOR = {M. Mbekhta},
     TITLE = {Sur la th\'{e}orie spectrale locale et limite des nilpotents},
   JOURNAL = {Proc. Amer. Math. Soc.},
    VOLUME = {110},
      YEAR = {1990},
     PAGES = {621--631},  }

\bib{MiMi}{article}{
AUTHOR = {T. L. Miller and V. G. Miller},
     TITLE = {An operator satisfying {D}unford's condition {$(C)$} but
              without {B}ishop's property {$(\beta)$}},
   JOURNAL = {Glasgow Math. J.},
    VOLUME = {40},
      YEAR = {1998},
     PAGES = {427--430},}
     
     \bib{MiMiN}{article}{
 AUTHOR = {{T. L Miller, V. G. Miller} and M. M. Neumann},
     TITLE = {On operators with closed analytic core},
   JOURNAL = {Rend. Circ. Mat. Palermo (2)},
    VOLUME = {51},
      YEAR = {2002},
     PAGES = {495--502},
     }

\bib{Mu}{book}{
author={V. M\"{u}ller},
title={Spectral theory of linear operators and spectral systems in
              {B}anach algebras},
              SERIES = {Operator Theory: Advances and Applications},
    VOLUME = {139},
 PUBLISHER = {Birkh\"{a}user},
      YEAR = {2003},
}

\bib{Mu2}{article}{
AUTHOR = {V. M\"{u}ller},
     TITLE = {On the {S}alas theorem and hypercyclicity of {$f(T)$}},
   JOURNAL = {Integral Equations Operator Theory},
  FJOURNAL = {Integral Equations and Operator Theory},
    VOLUME = {67},
      YEAR = {2010},
     PAGES = {439--448},
}

\bib{Runde}{book}{
AUTHOR = {V. Runde},
     TITLE = {A taste of topology},
    SERIES = {Universitext},
 PUBLISHER = {Springer},
      YEAR = {2005},
}

\bib{Shk}{article}{
AUTHOR = {S. Shkarin},
     TITLE = {On the spectrum of frequently hypercyclic operators},
   JOURNAL = {Proc. Amer. Math. Soc.},
  FJOURNAL = {Proceedings of the American Mathematical Society},
    VOLUME = {137},
      YEAR = {2009},
     PAGES = {123--134},
}

\bib{Vr}{article}{
AUTHOR = {P. Vrbov\'{a}},
     TITLE = {On local spectral properties of operators in {B}anach spaces},
   JOURNAL = {Czechoslovak Math. J.},
    VOLUME = {23},
      YEAR = {1973},
     PAGES = {483--492},
}

  \end{biblist}
\end{bibdiv}
\end{document}